    \numberwithin{equation}{section}
    \newcommand{\bbC}{\mathbb{C}}
    \newcommand{\bbN}{\mathbb{N}}
    \newcommand{\bbP}{\mathbb{P}}
    \newcommand{\bbR}{\mathbb{R}}
    \newcommand{\bbS}{\mathbb{S}}
    \newcommand{\bbZ}{\mathbb{Z}}
    \newcommand{\cA}{\mathcal{A}}
    \newcommand{\cB}{\mathcal{B}}
    \newcommand{\cC}{\mathcal{C}}
    \newcommand{\cF}{\mathcal{F}}
    \newcommand{\cH}{\mathcal{H}}
    \newcommand{\cI}{\mathcal{I}}
    \newcommand{\cJ}{\mathcal{J}}
    \newcommand{\cK}{\mathcal{K}}
    \newcommand{\cL}{\mathcal{L}}
    \newcommand{\cS}{\mathcal{S}}
    \newcommand{\cT}{\mathcal{T}}
    \newcommand{\cU}{\mathcal{U}}
    \newcommand{\cV}{\mathcal{V}}
    \newcommand{\cW}{\mathcal{W}}
    \newcommand{\cX}{\mathcal{X}}
    \newcommand{\cZ}{\mathcal{Z}}
    \newcommand{\kvec}{\mathbf{k}} 
    \newcommand{\norm}[2]{     \| #1       \|_{ #2 }}
    \newcommand{\scalar}[2]{     ( #1       )_{ #2 }}
    \newcommand{\dual}[1]{{#1}^{*}}
    \newcommand{\tr}{\operatorname{tr}}
    \newcommand{\rd}{\mathrm{d}}
    \newcommand{\from}{\colon}
    \newcommand{\mv}[1]{{\boldsymbol{\mathrm{#1}}}}
    \newcommand{\trsp}{\ensuremath{\top}}
    \newcommand{\eps}{\varepsilon}
    \newcommand{\GP}{Z}
    \newcommand{\gp}{z}
    \newcommand{\proper}{\mathsf}
    \newcommand{\pN}{\proper{N}}
    \newcommand{\normal}{\pN}
    \newcommand{\MU}{\widetilde{\mu}}
    \newcommand{\muc}{\mu_{\mathrm{c}}}
    \newcommand{\mus}{\mu_{\mathrm{s}}}
    \newcommand{\MUc}{\MU_{\mathrm{c}}}
    \newcommand{\MUs}{\MU_{\mathrm{s}}}
    \newcommand{\pE}{\proper{E}}
    \newcommand{\PE}{\widetilde{\pE}}
    \newcommand{\pEc}{\pE_{\mathrm{c}}}
    \newcommand{\pEs}{\pE_{\mathrm{s}}}
    \newcommand{\PEc}{\PE_{\mathrm{c}}}
    \newcommand{\PEs}{\PE_{\mathrm{s}}}
    \newcommand{\pV}{\proper{Var}}
    \newcommand{\PV}{\widetilde{\pV}}
    \newcommand{\pVc}{\pV_{\mathrm{c}}}
    \newcommand{\pVs}{\pV_{\mathrm{s}}}
    \newcommand{\PVc}{\PV_{\mathrm{c}}}
    \newcommand{\PVs}{\PV_{\mathrm{s}}}
    \newcommand{\Cov}{\proper{Cov}}
    \newcommand{\COV}{\widetilde{\Cov}}
    \newcommand{\CC}{\widetilde{\cC}}
    \newcommand{\CH}{\widetilde{\cH}}
    \newcommand{\hn}{h_n}
    \newcommand{\Hn}{\widetilde{h}_n}
    \newcommand{\hnc}{\hn^{\mathrm{c}}}
    \newcommand{\hns}{\hn^{\mathrm{s}}}
    \newcommand{\Hnc}{\Hn^{\mathrm{c}}}
    \newcommand{\Hns}{\Hn^{\mathrm{s}}}
    \newcommand{\err}{e}
    \newcommand{\ERR}{\widetilde{e}}
    \newcommand{\errc}{\err_{\mathrm{c}}}
    \newcommand{\errs}{\err_{\mathrm{s}}}
    \newcommand{\ERRc}{\ERR_{\mathrm{c}}}
    \newcommand{\ERRs}{\ERR_{\mathrm{s}}}
    \newcommand{\sinc}{\operatorname{sinc}}
    \theoremstyle{plain}
    \newtheorem{theorem}{Theorem}[section]
    \newtheorem{lemma}[theorem]{Lemma}
    \newtheorem{proposition}[theorem]{Proposition}
    \newtheorem{corollary}[theorem]{Corollary}
    \theoremstyle{remark}
    \newtheorem{remark}[theorem]{Remark}
    \newtheorem{assumption}[theorem]{Assumption}
    \newtheorem{example}[theorem]{Example}
\begin{document}

\begin{cbunit}
\begin{frontmatter}
			
\title{Necessary and sufficient conditions \\ 
	for asymptotically optimal linear prediction \\ 
	of random fields on compact metric spaces}
\runtitle{Linear prediction for misspecified random fields}

\begin{aug}
	\author[A]{\fnms{Kristin} \snm{Kirchner}\ead[label=e1]{k.kirchner@tudelft.nl}}
	\and  
	\author[B]{\fnms{David} \snm{Bolin}\ead[label=e2]{david.bolin@kaust.edu.sa}} 
	\address[A]{Delft Institute of Applied Mathematics, 
		Delft University of Technology,
		\printead{e1}} 
	\address[B]{CEMSE Division, King Abdullah 
		University of 
		Science and Technology, 
		\printead{e2}}
\end{aug}

\begin{abstract}
	Optimal linear prediction 
	(aka.\ kriging) of a 
	random field $\{Z(x)\}_{x\in\mathcal{X}}$ 
	indexed by a compact metric space 
	$(\mathcal{X},d_{\mathcal{X}})$
	can be obtained if the mean value function 
	$m\colon\mathcal{X}\to\mathbb{R}$ and the covariance 
	function 
	$\varrho\colon\mathcal{X}\times\mathcal{X}\to\mathbb{R}$ 
	of~$Z$ are known. 
	We consider the problem of 
	predicting the value of $Z(x^*)$ 
	at some location $x^*\in\mathcal{X}$ based on observations at 
	locations $\{x_j\}_{j=1}^n$ 
	which 
	accumulate at $x^*$  
	as $n\to\infty$ 
	(or, more generally, 
	predicting $\varphi(Z)$ 
	based on $\{ \varphi_j(Z) \}_{j=1}^n$
	for linear functionals $\varphi, \varphi_1, \ldots, \varphi_n$). 
	Our main result characterizes 
	the asymptotic performance 
	of linear predictors 
	(as $n$ increases) 
	based on an incorrect second order 
	structure $(\widetilde{m},\widetilde{\varrho})$, 
	without any restrictive assumptions 
	on $\varrho, \widetilde{\varrho}$ 
	such as stationarity. 
	We, for the first time, 
	provide necessary and sufficient conditions 
	on $(\widetilde{m},\widetilde{\varrho})$ 
	for asymptotic optimality of 
	the corresponding linear predictor  
	holding uniformly 
	with respect to $\varphi$. 
	These general results are illustrated by 
	weakly stationary random fields on $\mathcal{X}\subset\mathbb{R}^d$ 
	with Mat\'ern 
	or periodic covariance functions, 
	and 
	on the sphere $\mathcal{X}=\mathbb{S}^2$ 
	for the case of two   
	isotropic covariance functions. 
\end{abstract}

\begin{keyword}[class=MSC]
	\kwd[Primary ]{62M20} 
	\kwd[; secondary ]{60G12}  
	\kwd{60G25} 
	\kwd{60G60} 
\end{keyword}

\begin{keyword}
	\kwd{Kriging}
	\kwd{approximation in Hilbert spaces}
	\kwd{spatial statistics} 
\end{keyword}

\end{frontmatter}

\section{Introduction}\label{section:intro}

Optimal linear prediction of random fields, 
often also called kriging, 
is an important and widely used technique 
for interpolation of spatial data. 
Consider a random field 
$\{\GP(x) :  x\in\cX  \}$ 
on a compact topological space $\cX$ such as 
a closed and bounded subset of~$\bbR^d$. 
Assume that $\GP$ is  
almost surely continuous on $\cX$ 
and that we want to predict its value  
at a location $x^*\in\cX$ based on a set of observations 
$\{\GP(x_j)\}_{j=1}^n$ for locations 
$x_1,\ldots,x_n\in\cX$ 
all distinct from $x^*$. 
The kriging predictor is the linear predictor 
$\widehat{\GP}(x^*) 
= 
\alpha_0 + \sum_{j=1}^n \alpha_j \GP(x_j)$ 
of $\GP(x^*)$ based on the observations, 
where the coefficients 
$\alpha_0,\ldots,\alpha_n\in\bbR$ 
are chosen such that the variance of the error  
$(\widehat{\GP} - \GP)(x^*)$ is minimized. 
By letting $m(\,\cdot\,)$ and $\varrho(\,\cdot\,, \,\cdot\,)$  
denote the mean and the covariance function of $\GP$,  
we can express $\widehat{\GP}(x^*)$ as 
\begin{equation}\label{eq:kriging_equation}
\widehat{\GP}(x^*) 
= 
m(x^*) 
+ 
\mv{c}_n^\trsp \mv{\Sigma}_n^{-1} (\mv{\GP}_n - \mv{m}_n), 
\end{equation}
where 
$\mv{\GP}_n := (\GP(x_1),\ldots, \GP(x_n))^{\trsp}$, 
$\mv{m}_n := ( m(x_1),\ldots, m(x_n) )^{\trsp}$, 
$\mv{\Sigma}_n \in \bbR^{n\times n}$ 
has elements $[\mv{\Sigma}_n]_{ij} := \varrho(x_i,x_j)$, and  
$\mv{c}_n := ( \varrho(x^*, x_1), \ldots, \varrho(x^*,x_n) )^{\trsp}$. 

In applications the 
mean and covariance functions are rarely known 
and therefore need to be estimated from data. 
It is thus of interest to study the effect which  
a misspecification of the mean  
or the covariance function 
has on the efficiency of the linear predictor. 
Stein~\cite{stein88,stein90} 
considered the situation 
that the sequence 
$\{x_j\}_{j\in\bbN}\subset \bbR^d$ 
has $x^*$ 
as a limiting point and the  
predictor~$\widehat{\GP}$ 
is computed using misspecified mean and covariance functions,  
$\widetilde{m}$ and $\widetilde{\varrho}$. 
His main outcome was that the best linear predictor based on 
$(\widetilde{m}, \widetilde{\varrho})$ is asymptotically efficient, 
as $n\rightarrow\infty$,  
provided that the Gaussian measures corresponding to $(m, \varrho)$ 
and $(\widetilde{m}, \widetilde{\varrho})$ are equivalent  
(see Appendix~A).  
This result in fact holds uniformly 
with respect to~$x^*$ 
and, moreover, uniformly 
for each linear functional $\varphi$ such that 
$\varphi(\GP)$ has finite variance \cite{stein90}. 

For stationary covariance functions, 
there exist simple conditions for verifying 
whether the corresponding Gaussian measures are 
equivalent 
\cite{Arafat2018,Bevilacqua2019,stein93} 
and thus if the linear predictions are asymptotically efficient. 
However, for any constant $c\in(0,\infty)$, 
the linear predictor based on $(m, c\varrho)$ 
is equal to that based on $(m, \varrho)$, 
whereas the Gaussian measures corresponding to $(m, c\varrho)$ 
and $(m, \varrho)$ are orthogonal for all $c\neq 1$. 
This shows that equivalence of the measures is a sufficient 
but not necessary condition for asymptotic efficiency. 
Less restrictive conditions have been derived for some specific cases 
such as periodic processes on $[0,1]^d$ and 
weakly stationary random fields on~$\bbR^d$ 
observed on a lattice \cite{stein97, stein99-paper}. 
With these results in mind, 
an immediate question is if one can find 
\emph{necessary and sufficient} conditions 
for uniform asymptotic efficiency of linear prediction 
using misspecified mean and covariance functions. 
The aim of this work is to show that this indeed is the case. 

We derive necessary and sufficient conditions 
for general second order structures~$(m, \varrho)$ 
and $(\widetilde{m}, \widetilde{\varrho})$, 
without any  
restrictive assumptions  
such as periodicity or stationarity.
These conditions are weaker 
compared to those of the Feldman--H\'ajek theorem 
and thus clearly exhibit 
their fulfillment in the case that 
the Gaussian measures corresponding to  
$(m, \varrho)$ and 
$(\widetilde{m}, \widetilde{\varrho})$
are equivalent 
(see Remark~\ref{rem:comparison-with-FH}).  
Furthermore, our results are formulated for random 
fields on general compact metric spaces, 
which include compact Euclidean domains 
in~$\bbR^d$, 
but also more general domains such as the 
sphere~$\bbS^2$ or metric graphs  
(see Example~\ref{ex:cov-functions}). 
Assuming compactness of the space 
is meaningful, since 
in applications 
the observed locations are always 
contained in some compact subset 
(e.g., a bounded and closed domain in $\bbR^d$).

This general setting is outlined in Section~\ref{section:setting}. 
Our main results are stated in Section~\ref{section:results}  
and proven in Section~\ref{section:proofs}. 
Section~\ref{section:simplified} presents 
simplified necessary and sufficient conditions 
for the two important 
special cases when   
$\varrho,\widetilde{\varrho}$ 
induce the same eigenfunctions, or when
$\varrho,\widetilde{\varrho}$ are 
translation invariant on $\bbR^d$ and 
have spectral densities.  
Section~\ref{section:applications} verifies these 
conditions for weakly stationary  
random fields on $\cX\subset\bbR^d$, 
where $\varrho,\widetilde{\varrho}$ are 
of Mat\'ern type  
or periodic on~$[0,1]^d$.  
We also discuss an example  
on $\cX=\bbS^2$ which, to the best of our knowledge, 
is the first result on 
asymptotically optimal linear prediction on the sphere. 
The Supplementary Material \cite{kbsup} 
contains three appendices 
(Appendix~A/B/C) 
pertaining to this article.

\section{Setting and problem formulation}\label{section:setting}

We assume that we are given a 
square-integrable stochastic process 
$\GP\from\cX\times\Omega\to\bbR$ 
defined 
on a complete probability space $(\Omega,\cA,\bbP)$ 
and indexed by a  
connected, compact metric space $(\cX,d_{\cX})$ 
of infinite cardinality.  
In addition, we let ${m\from\cX\to\bbR}$ denote 
the mean value function of~$\GP$   
and assume  
that the covariance function, 
\[
\varrho \from\cX\times\cX \to \bbR, 
\qquad 
\varrho(x,x') 
:= 
\int_\Omega \, (\GP(x,\omega)-m(x)) (\GP(x',\omega)-m(x')) \, \rd\bbP(\omega), 
\] 
is (strictly) positive definite 
and continuous. 
Let $\nu_\cX$ be a strictly positive and finite 
Borel measure on $(\cX,\cB(\cX))$. 
Here and throughout, 
$\cB(\cT)$ denotes 
the Borel $\sigma$-algebra 
on a topological space~$\cT$. 
As the symmetric covariance function $\varrho$ 
is assumed to be 
positive definite   
and continuous, 
the corresponding covariance 
operator, defined by 
\begin{equation}\label{eq:kernel-C}
\textstyle 
\cC\from L_2(\cX,\nu_\cX) \to L_2(\cX,\nu_\cX), 
\qquad 
(\cC w)(x) := \int_{\cX} \varrho(x, x') w(x') \,\rd\nu_\cX(x'), 
\end{equation}
is self-adjoint, 
positive definite, and compact  
on $L_2(\cX,\nu_\cX)$. 
Since $(\cX,d_\cX)$ 
is connected and compact,   
the set $\cX$ is uncountable and 
$L_2(\cX,\nu_\cX)$ is an 
infinite-dimensional separable Hilbert space. 
By compactness of $\cC$, there exists a 
countable system of (equivalence classes of)
eigenfunctions~$\{e_j\}_{j\in\bbN}$ of~$\cC$ 
which can be chosen 
as an orthonormal basis for~$L_2(\cX,\nu_\cX)$.

Moreover, it can be shown that $\cC$ maps into 
the space of continuous functions.  
For this reason, 
we may identify 
the eigenfunctions $\{e_j\}_{j\in\bbN}$ 
with their continuous 
representatives. 
We let $\{\gamma_j\}_{j\in\bbN}$ denote 
the positive eigenvalues corresponding 
to $\{e_j\}_{j\in\bbN}$. 
By Mercer's theorem 
(see e.g.\ \cite{Mercer1909,Steinwart2012})
the covariance function $\varrho$ then 
admits the series representation 
\begin{equation}\label{eq:varrho-series}  
\textstyle 
\varrho(x,x') 
= \sum\limits_{j\in\bbN} \gamma_j e_j(x) e_j(x'),
\qquad
x,x'\in\cX,    
\end{equation} 
where the convergence of this series 
is absolute and uniform. 
In addition, we can express the action 
of the covariance operator by a series,
\[
\textstyle 
\cC w 
=
\sum\limits_{j\in\bbN} 
\gamma_j \scalar{ w, e_j}{L_2(\cX,\nu_\cX)} e_j,  
\qquad
w \in L_2(\cX,\nu_\cX),
\]
converging 
pointwise---i.e., for all $w \in L_2(\cX,\nu_\cX)$---and 
uniformly, i.e., 
in the operator norm.  
Finally, we note that 
square-integrability of the stochastic process 
implies that $\cC$ 
has a finite trace on $L_2(\cX,\nu_\cX)$, 
$\tr(\cC)=\sum_{j\in\bbN} \gamma_j < \infty$. 

\begin{example}\label{ex:cov-functions} 
Examples of   
covariance functions   
on a compact metric space  
$(\cX, d_\cX)$ 
are given by the Mat\'ern class, 
where 
\[
\varrho(x,x') 
:= 
\varrho_{\mathsf{M}}\bigl( d_\cX(x,x') \bigr) 
\quad 
\text{with} 
\quad 
\varrho_{\mathsf{M}}(r)
:= 
\tfrac{\sigma^2}{2^{\nu-1} \Gamma(\nu)} \,
(\kappa r)^\nu 
K_{\nu}(\kappa r), 
\quad 
r\geq 0. 
\]
More precisely, one may consider  
stochastic processes with Mat\'ern covariance functions, 
indexed by one of the following compact metric spaces:   
\begin{enumerate}[label={(\alph*)}, labelsep=5mm, topsep=4pt]
	\item\label{ex:cov-euclid}\hspace*{-2.3mm}  
	$\cX\subset\bbR^d$ is a  
	\emph{connected, compact Euclidean domain}  
	equipped with 
	the Euclidean metric  
	for all parameters 
	$\sigma,\nu,\kappa \in (0,\infty)$, 
	see  
	\cite{matern1960}; 
	\item\label{ex:cov-sphere}\hspace*{-3mm}     
	$\cX:= \bbS^{d}=\left\{x \in \bbR^{d+1} :
	\norm{x}{\bbR^{d+1}}=1\right\}$ 
	is the \emph{$d$-sphere} equipped 
	with the great circle distance 
	$d_{\bbS^d}(x,x') 
	:= 
	\arccos\bigl( \scalar{x,x'}{\bbR^{d+1}} \bigr)$  
	for $\sigma,\kappa \in (0,\infty)$ and 
	$\nu\in(0, 1/2]$, 
	see  
	\cite[][Section~4.5, Example~2]{gneiting2013}; 
	\item\label{ex:cov-manifold}\hspace*{-2.4mm}    
	$\cX \subset \bbR^D$ 
	is a \emph{$d$-dimensional connected, compact    
		manifold} (e.g., the $d$-sphere~$\bbS^d$), 
	embedded in $\bbR^D$ for some $D>d$   
	and equipped 
	with the Euclidean metric 
	on~$\bbR^{D}$  
	for any set of parameters 
	$\sigma,\nu,\kappa \in (0,\infty)$, 
	see e.g.\  
	\cite{Guinness2016}; 
	\item\label{ex:cov-graph}\hspace*{-2.4mm}   
	$\cX$ is a \emph{graph with Euclidean edges} 
	equipped 
	with the resistance metric for   
	$\sigma,\kappa \in (0,\infty)$ and 
	$\nu\in(0, 1/2]$, see  
	\cite[][Definition~1, Section~2.3 and Table~1]{Anderes2017}. 
\end{enumerate} 
We point out that, for $\nu \in (1/2,\infty)$, 
the function $(x,x') \mapsto \varrho_{\mathsf{M}}\bigl( d_\cX(x,x') \bigr)$ 
in~\ref{ex:cov-sphere} and~\ref{ex:cov-graph}  
is not (strictly) positive definite 
and, thus, not a valid covariance function 
for our setting. 
We furthermore emphasize that the Mat\'ern covariance families    
in~\ref{ex:cov-euclid} and~\ref{ex:cov-sphere} 
are stationary on~$\bbR^d$ and isotropic on $\bbS^d$, respectively, 
but we do not require $\varrho$ to have 
these properties. 
\end{example}

Since the kriging predictor in \eqref{eq:kriging_equation}
only depends on the mean value function and the covariance function 
of the process $\GP$, it is identical to 
the kriging predictor for a \emph{Gaussian} process 
with the same first two moments. 
For ease of presentation, 
we therefore from now on assume that 
$\GP$ is a 
Gaussian process on $(\cX,d_\cX)$ 
with mean value function $m\in L_2(\cX,\nu_\cX)$, 
continuous, (strictly) positive definite covariance 
function $\varrho$ 
and corresponding covariance operator~$\cC$. 
Note, however, that all our results 
extend to the case of non-Gaussian processes, 
as their proofs rely only on 
the first two statistical moments.  
We write 
$\mu = \normal(m,\cC)$
for the Gaussian measure on 
the Hilbert space $L_2(\cX,\nu_\cX)$ 
induced by the process $\GP$, i.e., 
for every 
Borel set 
$A \in \cB(L_2(\cX,\nu_\cX))$ we have 
\[
\mu(A)
= 
\bbP(\{\omega\in\Omega : \GP(\,\cdot\,, \omega)\in A\}).
\]   
The operator $\pE[\,\cdot\,]$ will 
denote the expectation operator under $\mu$, 
i.e., 
for 
an $L_2(\cX,\nu_\cX)$-valued random variable $Y$ 
with distribution $\mu$ 
and a Borel measurable mapping 
$\varphi\from L_2(\cX,\nu_\cX) \to \bbR$, 
the expected values 
$\pE[Y]\in L_2(\cX,\nu_\cX)$ and $\pE[\varphi(Y)]\in\bbR$ 
are the Bochner integrals 
\[
\pE[Y]
=
\int_{L_2(\cX,\nu_\cX)} y \, \rd\mu(y)
\quad\;\; 
\text{and} 
\quad\;\; 
\pE[\varphi(Y)]
=
\int_{L_2(\cX,\nu_\cX)} \varphi(y) \, \rd\mu(y),  
\] 
cf.~\cite[][Corollary~5.1]{Kukush2019}. 
Furthermore, we use the following notation 
for the real-valued 
variance and covariance operators 
with respect to $\mu$: 
If $\varphi, \varphi'\from L_2(\cX,\nu_\cX) \to \bbR$ 
are Borel measurable and 
$g:=\varphi(Y)$, $g':=\varphi'(Y)$, then
\[
\pV[g]:=\pE\bigl[ (g-\pE[g])^2 \bigr], 
\qquad 
\Cov[g, g'] 
:= 
\pE\bigl[ (g-\pE[g]) (g'-\pE[g']) \bigr]. 
\]

To present the theoretical setting 
of optimal linear prediction (kriging) 
as well as the necessary notation, 
we proceed in two steps:  
We first consider the centered case 
with $m=0$, 
and then extend it 
to the general case.

\subsection{Kriging assuming zero mean} 

Let $\GP^0\from\cX\times\Omega\to\bbR$ be 
a \emph{centered} Gaussian process. 
Assuming that it has a continuous covariance function~$\varrho$, 
we may identify $\GP^0 \from \cX \to L_2(\Omega,\bbP)$ 
with its continuous representative.  
In particular, for each $x\in\cX$, 
the real-valued random variable $\GP^0(x)$ 
is a well-defined element of $L_2(\Omega,\bbP)$.  
Consider the vector space $\cZ^0\subset L_2(\Omega,\bbP)$ 
of finite linear combinations 
of such random variables,  
\begin{equation}\label{eq:def:cZ0}
\textstyle
\cZ^0 := 
\biggl\{ 
\sum\limits_{j=1}^K \alpha_j \GP^0(x_j)
: 
K\in\bbN, 
\
\alpha_1,\ldots, \alpha_K \in\bbR, 
\  
x_1,\ldots, x_K \in \cX
\biggr\}. 
\end{equation} 
We then define the (Gaussian) Hilbert space $\cH^0$ 
(cf.~\cite{Janson1997}) 
as the 
closure of $\cZ^0$ 
with respect to the norm $\norm{\,\cdot\,}{\cH^0}$ 
induced by the $L_2(\Omega,\bbP)$ inner product,  
\begin{equation}\label{eq:def:cH0}
\begin{split}
	\textstyle 
	\biggl( 
	\sum\limits_{i=1}^{K}  \alpha_i  \GP^0(x_i), 
	\sum\limits_{j=1}^{K'} \alpha_j' \GP^0(x_j')
	\biggr)_{\cH^0}
	\textstyle 
	:= 
	\sum\limits_{i=1}^{K}  
	\sum\limits_{j=1}^{K'} \alpha_i \alpha_j' 
	\pE\bigl[ \GP^0(x_i) \GP^0(x_j') \bigr], 
	\\
	\cH^0 
	:= 
	\bigl\{ 
	g \in L_2(\Omega,\bbP)  
	\, \bigl| \, 
	\exists 
	\{g_j\}_{j\in\bbN}\subset\cZ^0  
	:
	\lim\nolimits_{j\to\infty}\norm{g-g_j}{L_2(\Omega,\bbP)}
	=0
	\bigr\}. 
\end{split}
\end{equation} 
Continuity of the covariance kernel $\varrho$ 
on $\cX\times\cX$ implies separability 
of the Hilbert space   
$\cH^0$, 
see 
\cite[][Theorem~32]{Berlinet2004} and  
\cite[][Theorem~2C]{Parzen1959}.

By definition, see e.g.\ \cite[][Section~1.2]{stein99}, 
the kriging predictor~$\hn^0$ of $h^0\in\cH^0$ 
based on a set of observations 
$\bigl\{ y_{n1}^0, \ldots, y_{nn}^0 \bigr\} \subset \cH^0$ 
is~the~best linear predictor in $\cH^0$ or, 
in other words,  
the $\cH^0$-orthogonal projection of~$h^0$ 
onto the linear space 
generated by $y_{n1}^0, \ldots, y_{nn}^0$. 
By recalling the inner product on $\cH^0$ 
from \eqref{eq:def:cH0}, 
the kriging predictor $\hn^0$ is thus the unique element 
in the finite-dimensional subspace 
$\cH_n^0 := 
\operatorname{span}\bigl\{ y_{n1}^0, \ldots, y_{nn}^0 \bigr\}
\subset\cH^0$ 
satisfying 
\begin{equation}\label{eq:def:h0n}
\hn^0 \in \cH_n^0: 
\quad 
\left( \hn^0 - h^0, g_n^0 \right)_{\cH^0} 
= 
\pE\bigl[ \bigl( \hn^0 - h^0 \bigr) g_n^0 \bigr] 
=
0
\quad 
\forall g_n^0 \in \cH_n^0. 
\end{equation} 
Consequently, $\hn^0$  
is the $\cH^0$-best approximation 
of $h^0$ in $\cH_n^0$, i.e., 
\[
\| \hn^0 - h^0 \|_{\cH^0} 
= 
\inf\nolimits_{g_n^0 \in \cH_n^0} 
\| g_n^0 - h^0 \|_{\cH^0}. 
\]

\subsection{Kriging with general mean}

Let us next consider the case  
that the Gaussian process~$\GP$ 
has a general mean value function 
$m\from\cX\to\bbR$  
which, for now, we  
assume to be continuous. 
The analytical framework  
for kriging  
then needs to be adjusted, since 
the space of possible predictors 
has to contain 
functions of the form \eqref{eq:kriging_equation}  
including constants.

Evidently, 
every linear combination 
$h = \sum_{j=1}^K \alpha_j \GP(x_j)$ 
has a representation $h = c + h^0$ 
with $c\in\bbR$ and $h^0 \in\cZ^0\subset\cH^0$, 
where the vector spaces 
$\cZ^0,\cH^0\subset L_2(\Omega,\bbP)$ 
are generated by linear combinations 
of the centered process $\GP^0 := \GP - m$ 
as in~\eqref{eq:def:cZ0} and \eqref{eq:def:cH0}; 
namely, 
\begin{equation}\label{eq:h-decomp} 
\textstyle 
h 
= 
\sum\limits_{j=1}^K \alpha_j \GP(x_j)
= 
\sum\limits_{j=1}^K \alpha_j m(x_j) 
+ 
\sum\limits_{j=1}^K \alpha_j \GP^0(x_j)  
=: 
c + h^0, 
\end{equation} 
or, more generally, 
$h = \pE[h] + (h - \pE[h])$. 
Furthermore, note that 
zero is the only constant 
contained in $\cZ^0, \cH^0 \subset L_2(\Omega,\bbP)$. 
This follows from the fact that 
elements in $\cZ^0$ are linear combinations 
of the process $\GP^0$ at locations in~$\cX$, 
see~\eqref{eq:def:cZ0}.  
A constant $c\neq 0$ in $\cZ^0$ 
would thus imply that the corresponding linear combination 
$c=\sum_{j=1}^K \alpha_j \GP^0(x_j)$ 
has zero variance, 
which contradicts the (strict) positive 
definiteness of the covariance function~$\varrho$. 
For this reason, the decomposition in \eqref{eq:h-decomp} 
is unique. 
This motivates 
to define  
the Hilbert space   
containing all possible observations 
and predictors 
for a general second order structure $(m,\varrho)$ 
as the (internal) direct sum 
of vector spaces, given by 
\begin{equation}\label{eq:def:cH}
\cH 
:= 
\bbR\oplus \cH^0 
= 
\bigl\{ 
h \in L_2(\Omega,\bbP) 
: 
\exists c\in\bbR, \, 
\exists h^0 \in\cH^0 
\text{ with }
h = c + h^0 
\bigr\},  
\end{equation}
which is equipped with the graph norm,  
\begin{equation}\label{eq:cH-norm}
\|h\|_{\cH}^2 = |c|^2 + \|h^0\|_{\cH^0}^2
\quad 
\text{if} 
\quad 
h = c + h^0 
\in \bbR\oplus\cH^0 = \cH. 
\end{equation}
Note that, similarly as for $\cH^0$, 
the inner product on $\cH$ equals  
the inner product on $L_2(\Omega,\bbP)$:
\[
\scalar{g,h}{\cH} 
= 
\scalar{\pE[g], \pE[h]}{\bbR} 
+ 
\scalar{g - \pE[g], h - \pE[h]}{\cH^0}
=
\pE[g] \pE[h] 
+ 
\Cov[g, h]
=
\pE[g h]. 
\]

Now suppose that we want to predict $h \in \cH$ 
given a set of observations 
\[
y_{nj}=c_{nj} + y_{nj}^0 \in \cH, 
\quad 
\text{where} 
\quad 
c_{nj}\in\bbR, 
\quad 
y_{nj}^0\in\cH^0, 
\qquad  
j\in\{ 1,\ldots,n\}.
\] 
The kriging predictor of $h= c + h^0 
\in \bbR\oplus\cH^0=\cH$ 
based on the observations $\{ y_{n1},\ldots,y_{nn} \}$ 
is then 
$\hn = c + \hn^0$, 
where $\hn^0$ is the kriging predictor of~$h^0$ 
based on the centered 
observations 
$\bigl\{ y_{n1}^0, \ldots, y_{nn}^0 \bigr\} \subset\cH^0$, 
as defined in \eqref{eq:def:h0n}. 
The definition 
of the norm on $\cH$ 
in \eqref{eq:cH-norm}  
readily implies that 
\[  
\| \hn - h \|_{\cH}^2 
= 
|c-c|^2 + \| \hn^0 - h^0 \|_{\cH^0}^2 
= 
0 + \inf_{g_n^0 \in \cH_n^0} \| g_n^0 - h^0 \|_{\cH^0}^2.  
\]
Hence, if we, for 
$y_{n1}^0, \ldots, y_{nn}^0 \in \cH^0$, 
define the subspace 
$\cH_n \subset \cH$ by  
\begin{equation}\label{eq:def:cHn}
\cH_n 
:= 
\bbR \oplus \cH^0_n, 
\quad\text{where}\quad
\cH^0_n
:= 
\operatorname{span}
\bigl\{ y_{n1}^0, \ldots, y_{nn}^0  \bigr\} 
\subset 
\cH^0, 
\end{equation} 
we have that in either case 
(centered and non-centered) 
the kriging predictor of $h\in\cH$   
based on the observations  
$\bigl\{ y_{n1}=c_{n1}+y_{n1}^0, \ldots, y_{nn}=c_{nn}+y_{nn}^0 \bigr\}$   
is given by the $\cH$-orthogonal projection 
of $h$ onto~$\cH_n$, i.e.,  
\begin{equation}\label{eq:def:hn}
\begin{split}
	\hn\in\cH_n :& 
	\quad 
	\scalar{ \hn - h, g_n}{\cH} 
	= 
	\pE\left[ ( \hn - h ) g_n \right] 
	= 0
	\quad 
	\forall 
	g_n \in \cH_n, 
	\\
	\hn\in\cH_n :& 
	\qquad\;  
	\| \hn - h \|_{\cH}
	= 
	\inf_{g_n \in \mathcal{H}_n} \| g_n - h \|_{\cH}. 
\end{split} 
\end{equation}
For this reason, for every $h\in\cH$, 
the kriging predictor 
$\hn$ is fully determined by the subspace~$\cH_n$ 
and we also call~$\hn$ the  
kriging predictor (or 
best linear predictor)   
\emph{based on~$\cH_n$}  
(instead of 
\emph{based on the set of observations 
$\{y_{n1}, \ldots, y_{nn} \}$}). 

Finally, since 
the definitions 
\eqref{eq:def:cH}, \eqref{eq:def:cHn} and  
\eqref{eq:def:hn} of the spaces $\cH, \cH_n$ and 
the kriging predictor $\hn$ 
are meaningful even if 
the mean value function is not continuous, 
hereafter we only require 
that $m\in L_2(\cX,\nu_\cX)$.  
Note, however,  
that then  
the point evaluation $\GP(x^*)$, $x^*\in\cX$, 
might not be an element of~$\cH$.

\subsection{Problem formulation}

We assume without loss of generality 
that the centered observations $y_{n1}^0,\ldots, y_{nn}^0$
are linearly independent in $\cH^0$  
so that 
in~\eqref{eq:def:cHn} we have $\dim( \cH_n^0 ) = n$. 
Furthermore, we suppose that the 
family of 
subspaces~$\{\cH_n \}_{n\in\bbN}$  
generated by the observations,   
see \eqref{eq:def:cHn}, 
is dense in~$\cH$. 
More specifically,   
we require that, 
for any $h\in\cH$, 
the corresponding kriging predictors  
$\{ \hn \}_{n\in\bbN}$ 
defined via \eqref{eq:def:hn} 
are consistent in the sense that 
\begin{equation}\label{eq:ass:Hn-dense}
\lim_{n\to\infty} 
\pE\bigl[ (\hn - h)^2 \bigr]
= 
\lim_{n\to\infty} 
\| \hn - h \|_{\cH}^2   
= 0. 
\end{equation}
For future reference, we introduce the 
set $\cS_{\mathrm{adm}}^\mu$ 
which contains all admissible sequences  
$\{\cH_n\}_{n\in\bbN}$ 
of subspaces of $\cH$ 
generated by observations which 
provide $\mu$-consistent kriging, 
\begin{equation}\label{eq:def:S-adm} 
\begin{split} 
	\cS^\mu_{\mathrm{adm}}  
	:= 
	\bigl\{ \{\cH_n\}_{n\in\bbN} \, \bigl| \, 
	&\forall n \in \bbN:  
	\cH_n \text{ is as in \eqref{eq:def:cHn} with}\,
	\dim(\cH_n^0)=n,  
	\\
	&\forall h\in\cH:  
	\{\hn\}_{n\in\bbN}   
	\text{ as in \eqref{eq:def:hn}		
		satisfy \eqref{eq:ass:Hn-dense}} \bigr\}.  
\end{split} 
\end{equation} 
Since $(\cX, d_\cX)$ is connected and compact,   
we have 
$\cS^\mu_{\mathrm{adm}}\neq\emptyset$. 
Note that we do 
\emph{not assume nestedness} 
of $\{\cH_n\}_{n\in\bbN}$. 
Therefore, we cover situations when  
the observations are not part of a sequence and 
$\{ y_{n1},\ldots,y_{nn} \} \not\subset 
\{ y_{n+1,1},\ldots,y_{n+1,n+1} \}$. 

\begin{example}\label{ex:admissible} 
Suppose that 
$m$ and $\varrho$ are continuous 
and that $\{x_j\}_{j\in\bbN}$ is a sequence in 
$(\cX,d_\cX)$ which  
\emph{accumulates at $x^*\in\cX$}, i.e., 
there exists a subsequence 
$\{\bar{x}_k\}_{k\in\bbN} \subseteq \{ x_j \}_{j\in\bbN}$ 
such that 
$\lim_{k\to\infty} d_\cX( \bar{x}_{k}, x^* ) =0$.
Assume further that 
$\cH_n \supseteq \bbR \oplus \operatorname{span}\bigl\{\GP^0(x_j):j\leq n \bigr\}$ 
for all $n$. 
Then, the kriging predictors $\{\hn\}_{n\in\bbN}$     
for $h := \GP(x^*)$ are consistent:  
For $n$ sufficiently large such that 
$\{ x_j \}_{j=1}^n \cap \{\bar{x}_k\}_{k\in\bbN} 
= \{\bar{x}_k\}_{k=1}^{k^*_n}$ 
is not empty (i.e., $k^*_n \in\bbN$), 
we have 
\begin{align*}
	\pE\bigl[ (\hn - h)^2 \bigr] 
	&\leq \textstyle
	\inf\limits_{\alpha_0,\alpha_1, \ldots, \alpha_n\in\bbR}
	\pE\Bigl[ \bigl( 
	\GP(x^*) - \alpha_0 - \sum\nolimits_{j=1}^n \alpha_j \GP^0(x_j) 
	\bigr)^2 \Bigr] 		
	\\
	&\textstyle 
	= 
	\inf\limits_{\alpha_1, \ldots, \alpha_n\in\bbR}
	\pE\Bigl[ \bigl( 
	\GP^0(x^*) - \sum\nolimits_{j=1}^n \alpha_j \GP^0(x_j) 
	\bigr)^2 \Bigr]
	\leq 
	\pE\Bigl[ \bigl( \GP^0(x^*) - \GP^0( \bar{x}_{k^*_n} ) \bigr)^2 \Bigr] 
	\\
	&= 
	\varrho( x^*,x^* ) 
	+ 
	\varrho( \bar{x}_{k^*_n}, \bar{x}_{k^*_n} ) 
	- 
	2\varrho( \bar{x}_{k^*_n}, x^* ) \rightarrow 0, 
	\quad\;\; \text{as} \quad\;\; 
	n\rightarrow \infty . 
\end{align*}
This shows \eqref{eq:ass:Hn-dense} for 
$h = \GP(x^*)$. Note 
that the kriging predictors based 
on the  
subspaces $\{\cH_n\}_{n\in\bbN}$ 
have to be consistent 
for every $h\in\cH$ so that 
the sequence 
is admissible,  
$\{\cH_n\}_{n\in\bbN} \in \cS^\mu_{\mathrm{adm}}$, 
see \eqref{eq:def:S-adm}. 
Assuming that every     
$\cH_n^0$ 
is generated by centered point observations 
$\GP^0(x_1), \GP^0(x_2), \ldots$,   
the above argument shows that, for any 
$h=\sum_{\ell=1}^L c_\ell \GP(x^*_{\ell})$ in $\bbR\oplus\cZ^0$, 
the kriging predictors $\{\hn\}_{n\in\bbN}$ based on 
$\{\cH_n\}_{n\in\bbN}$, $\cH_n:=\bbR\oplus\cH_n^0$,  
are consistent 
whenever 
the sequence of observation points  
$\{ x_j \}_{j\in\bbN}$
\emph{accumulates at any} $x^*\in\cX$. 
Since $\cZ^0$ is dense in the Hilbert space $\cH^0$, 
the same is true for 
every $h\in\cH=\bbR\oplus\cH^0$. 
\end{example}

Suppose that  
$\MU = \normal(\widetilde{m},\CC)$ 
is a second Gaussian measure on 
$L_2(\cX,\nu_\cX)$   
with mean value 
function~$\widetilde{m}\in L_2(\cX,\nu_\cX)$  
and trace-class covariance operator 
$\CC\from L_2(\cX,\nu_\cX) \to L_2(\cX,\nu_\cX)$. 
Let $\PE[\,\cdot\,]$, $\PV[\,\cdot\,]$ 
and $\COV[\,\cdot\,, \,\cdot\,]$ denote 
the real-valued expectation, variance, 
and covariance operators 
under $\MU$. 
We are now interested in the asymptotic behavior 
of the linear predictor based on~$\MU$. 
That is, what happens if, instead of the kriging predictor $\hn$, 
we use the linear predictor $\Hn$ 
which is the kriging predictor 
if $\MU$ was the correct model?

\section{General results on compact metric spaces}\label{section:results} 

We first generalize the results 
of \cite[][Section~3]{stein90} and 
\cite[][Chapter~4, Theorem~10]{stein99} 
to our setting of Gaussian processes 
on a compact metric space $(\cX, d_\cX)$. 
That is, uniformly asymptotically  
optimal linear prediction 
under the assumption that 
the two Gaussian measures 
$\mu$ and $\MU$ are equivalent 
on $L_2(\cX,\nu_\cX)$. 

\begin{theorem}\label{thm:kriging-equiv-measures} 
Let $\mu=\normal(m,\cC)$ 
and  
$\MU=\normal(\widetilde{m},\CC)$ be equivalent.  
Define 
$\cH^0$ and $\CH^0$ as in \eqref{eq:def:cH0} 
with respect to   
$\mu$ and $\MU$, 
respectively. 
Then, 
$\cH^0$ and $\CH^0$ are norm equivalent,  
$\cS^{\mu}_{\mathrm{adm}} = \cS^{\MU}_{\mathrm{adm}}$    
$($see \eqref{eq:def:S-adm}$)$, and  
for all  
$\{\cH_n\}_{n\in\bbN} \in \cS^\mu_{\mathrm{adm}}$ 
the following hold:   
\begin{gather}
	\lim_{n\to\infty}
	\sup_{h\in \cH_{-n}}
	\frac{
		\pE\bigl[ ( \Hn - h)^2 \bigr]
	}{
		\pE\bigl[ ( \hn - h)^2 \bigr]
	} 
	=
	\lim_{n\to\infty}
	\sup_{h\in \cH_{-n}}
	\frac{
		\PE\bigl[ ( \hn - h)^2 \bigr]
	}{
		\PE\bigl[ ( \Hn - h)^2 \bigr]
	} 
	= 1, 
	\label{eq:kriging_equiv-opt} 
	\\	
	\lim_{n\to\infty}
	\sup_{h\in \cH_{-n}}
	\left|
	\frac{
		\PE\bigl[ ( \hn - h)^2 \bigr]
	}{
		\pE\bigl[ ( \hn - h)^2 \bigr]
	} - 1 
	\right| 
	=
	\lim_{n\to\infty}
	\sup_{h\in \cH_{-n}}
	\left|
	\frac{
		\pE\bigl[ ( \Hn - h)^2 \bigr]
	}{
		\PE\bigl[ ( \Hn - h)^2 \bigr]
	} - 1 
	\right| 
	= 0. 
	\label{eq:kriging_equiv-a} 
\end{gather}
Here, $\hn, \Hn$ are 
the best linear predictors of $h\in\cH$ 
based on~$\cH_n$ and the 
measures~$\mu$ and $\MU$, respectively. 
The set 
$\cH_{-n}\subset \cH$ contains all 
elements 
with $\pE\bigl[ (\hn - h)^2\bigr] > 0$.
\end{theorem}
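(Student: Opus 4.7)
The plan is to reduce everything to the Feldman--H\'ajek characterization of $\mu\sim\MU$ recalled in Appendix~A. This yields: the Cameron--Martin spaces (the RKHSs of $\varrho$ and $\widetilde{\varrho}$) coincide as sets, $m-\widetilde{m}$ lies in this common space, and $T:=\cC^{-1/2}\CC\,\cC^{-1/2}$ extends to a bounded, boundedly invertible, positive operator on $L_2(\cX,\nu_\cX)$ with $T-I$ Hilbert--Schmidt. Using the Lo\`eve isometry $\varrho(\,\cdot\,,x)\mapsto\GP^0(x)$ and its tilde analogue to transport the resulting norm equivalence from the Cameron--Martin side to the Gaussian side, I obtain that $\cH^0$ and $\CH^0$ coincide as sets with equivalent norms, and the same then holds for $\cH=\bbR\oplus\cH^0$ and $\CH=\bbR\oplus\CH^0$ via \eqref{eq:cH-norm}. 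The equality $\cS^\mu_{\mathrm{adm}}=\cS^\MU_{\mathrm{adm}}$ is immediate afterwards: by \eqref{eq:ass:Hn-dense}, admissibility is density of $\bigcup_n\cH_n$ in $\cH$, and density in one of two equivalent norms is density in the other.

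For the asymptotic identities, I would let $P_n$ and $\widetilde{P}_n$ denote the $\cH$- and $\CH$-orthogonal projections onto $\cH_n$, so that $\hn=P_n h$ and $\Hn=\widetilde{P}_n h$. Since $\Hn-\hn\in\cH_n$ and $\hn-h$ is $\cH$-orthogonal to $\cH_n$, Pythagoras gives
\[
\pE\bigl[(\Hn-h)^2\bigr]=\pE\bigl[(\Hn-\hn)^2\bigr]+\pE\bigl[(\hn-h)^2\bigr],
\]
so the first ratio in \eqref{eq:kriging_equiv-opt} equals $1+\|\Hn-\hn\|_\cH^2/\|\hn-h\|_\cH^2$; the second ratio is handled symmetrically in the $\CH$-inner product, and norm equivalence transfers bounds between the two. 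For \eqref{eq:kriging_equiv-a}, the same reduction identifies the ratios with the $\CH^0$- to $\cH^0$-squared-norm ratio on the residual spaces $(I-P_n)\cH^0$ and $(I-\widetilde{P}_n)\cH^0$, respectively.

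All four reductions collapse to a single operator-theoretic statement: once $T$ is transported through the Lo\`eve isometry to act on $\cH^0$, the difference $T-I$ is compact, and if $R_n$ denotes $\cH^0$-orthogonal projection onto $(\cH_n^0)^\perp$, then $R_n\to 0$ strongly by density of $\bigcup_n\cH_n^0$ in $\cH^0$, hence $\|R_n(T-I)R_n\|_{\mathrm{op}}\to 0$ by compactness. The mean shift $m-\widetilde{m}\in H_\varrho$ corresponds to a single fixed element of $\cH^0$ whose $\cH$-residual with respect to $\cH_n$ tends to zero by consistency, and is absorbed into the same estimate after taking the supremum over $h\in\cH_{-n}$.

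The main obstacle is the bookkeeping among the three Hilbert-space pictures---the Gaussian space $\cH^0$, the RKHS $H_\varrho$, and $L_2(\cX,\nu_\cX)$ viewed through $\cC^{\pm 1/2}$---to ensure that the Feldman--H\'ajek compactness of $T-I$ is deployed on exactly the subspace orthogonal to $\cH_n^0$ in the correct inner product, and that the sup over $\cH_{-n}$ is realized by the operator norm on the residual space. Once this translation is in place, the four uniform limits follow from a single compact-operator estimate applied in parallel.
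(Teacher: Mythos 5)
Your proposal is correct and follows essentially the same route as the paper: the paper also deduces the theorem by noting that the Feldman--H\'ajek characterization yields Assumptions~\ref{ass:kriging}.I--III with $a=1$ (Hilbert--Schmidt implies compact), and then invokes the general sufficiency result (Proposition~\ref{prop:sufficiency}), whose core is exactly your estimate $\|R_n(T-\cI)R_n\|\to 0$ from compactness plus strong convergence of the residual projections, with the mean shift handled by Cauchy--Schwarz against a fixed representer, and with Proposition~\ref{prop:AssI} and Lemma~\ref{lem:S-mu} giving the norm equivalence of $\cH^0,\CH^0$ and $\cS^{\mu}_{\mathrm{adm}}=\cS^{\MU}_{\mathrm{adm}}$. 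Your Pythagorean decomposition $\pE[(\Hn-h)^2]=\pE[(\Hn-\hn)^2]+\pE[(\hn-h)^2]$ is only a cosmetic repackaging of the paper's chain-of-ratios inequality used to pass from \eqref{eq:kriging_equiv-a} to \eqref{eq:kriging_equiv-opt}.
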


The norm equivalence of $\cH^0$ and $\widetilde{\cH}^0$ 
guarantees that the best 
linear predictors $\{\Hn\}_{n\in\bbN}$ of $h\in\cH$ based on 
$\{\cH_n \}_{n\in\bbN} \in\cS^\mu_{\mathrm{adm}}$ 
and $\MU$ are well-defined. 
Furthermore, 
it corresponds to 
equivalence of $\pV[\,\cdot\,]$ 
and $\PV[\,\cdot\,]$ on $\cH$, see Proposition~\ref{prop:AssI} below, 
so that 
in combination with the restriction $h\in\cH_{-n}$ 
it ensures that  
the case~$0/0$ 
in \eqref{eq:kriging_equiv-opt}, \eqref{eq:kriging_equiv-a} 
is evaded:  
For $h\in\cH_{-n}$,
we obtain      
$\PE\bigl[ ( \Hn - h)^2 \bigr] 
= 
\PV\bigl[  \Hn - h \bigr] 
\geq 
c\, 
\pV\bigl[  \Hn - h \bigr] 
\geq  
c \, \pV\bigl[  \hn - h \bigr] 
=  
c \,
\pE\bigl[ ( \hn - h)^2 \bigr] 
>0$,  
with $c\in(0,\infty)$ 
independent of $n$ and $h$. \pagebreak   

Equivalence of the Gaussian measures 
$\mu=\normal(m,\cC)$ and   
$\MU=\normal(\widetilde{m},\CC)$ 
implies that  
${m - \widetilde{m}}$ is in the 
Cameron--Martin space 
$H^*:=\cC^{1/2}(L_2(\cX,\nu_\cX))$ 
which is a Hilbert space 
with respect to  
$\scalar{\,\cdot\,, \,\cdot\,}{H^*}
:= \scalar{\cC^{-1}\,\cdot\,, \,\cdot\,}{L_2(\cX,\nu_\cX)}$,  
see Appendix~A 
in the Supplementary Material~\cite{kbsup}.
However, 
$m$ and~$\widetilde{m}$ are 
not necessarily 
both elements of $H^*$. 
Thus, Theorem~\ref{thm:kriging-equiv-measures} 
generalizes \cite[][Chapter~4, Theorem~10]{stein99}, 
where $m=0$ is assumed, 
even on Euclidean domains. 

These results for equivalent measures $\mu$ and $\MU$ also apply  
when considering the variances of the prediction errors. 
This is subject of the next corollary.

\begin{corollary}\label{cor:kriging-equiv-measures-var} 
The statements of Theorem~\ref{thm:kriging-equiv-measures} 
remain true if we replace 
each second moment 
in \eqref{eq:kriging_equiv-opt} and \eqref{eq:kriging_equiv-a}
by the corresponding variance.
That is, 
for all $\{\cH_n\}_{n\in\bbN}\in\cS^\mu_{\mathrm{adm}}$, 
the following hold: 	
\begin{gather}
	\lim_{n\to\infty}
	\sup_{h\in \cH_{-n}} 
	\frac{
		\pV\bigl[ \Hn - h \bigr]
	}{
		\pV\bigl[ \hn - h \bigr]
	}  
	=
	\lim_{n\to\infty}
	\sup_{h\in \cH_{-n}}
	\frac{
		\PV\bigl[ \hn - h \bigr]
	}{
		\PV\bigl[ \Hn - h \bigr]
	} 
	= 1, 
	\label{eq:kriging_equiv_var-opt} 
	\\ 
	\lim_{n\to\infty}
	\sup_{h\in \cH_{-n}}
	\left|
	\frac{
		\PV\bigl[ \hn - h \bigr]
	}{
		\pV\bigl[ \hn - h \bigr]
	} - 1 
	\right| 
	=
	\lim_{n\to\infty}
	\sup_{h\in \cH_{-n}}
	\left|
	\frac{
		\pV\bigl[ \Hn - h \bigr]
	}{
		\PV\bigl[ \Hn - h \bigr]
	} - 1 
	\right| 
	= 0. 
	\label{eq:kriging_equiv_var-a}
\end{gather}
\end{corollary}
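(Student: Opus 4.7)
The strategy is to deduce all four limits in the corollary from Theorem~\ref{thm:kriging-equiv-measures} by exploiting two elementary structural facts that sidestep any direct estimation of bias terms.

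The first fact is an orthogonality observation: since $\cH_n=\bbR\oplus\cH_n^0$ contains the constant $1$, testing the $\mu$-orthogonality characterization of~$\hn$ against~$1$ yields $\pE[\hn-h]=\scalar{\hn-h,1}{\cH}=0$ and hence $\pE[(\hn-h)^2]=\pV[\hn-h]$; symmetrically, $\PE[(\Hn-h)^2]=\PV[\Hn-h]$. Thus in each of the four theorem ratios one side is already a variance, and the corresponding corollary ratios differ from the theorem ratios only by nonnegative squared bias terms of the form $(\pE[\Hn-h])^2$ or $(\PE[\hn-h])^2$.

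The second fact is variance-level optimality: since $\cH_n$ is invariant under real constant shifts, the shifted predictor $\Hn-c^*\in\cH_n$ with $c^*=\pE[\Hn]-\pE[h]$ has vanishing $\mu$-bias. As adding a constant does not change the variance, the defining optimality of $\hn$ under $\mu$ gives
\[
\pV[\hn-h]=\pE[(\hn-h)^2]\leq \pE\bigl[\bigl((\Hn-c^*)-h\bigr)^2\bigr]=\pV[\Hn-h],
\]
and the symmetric argument under~$\MU$ yields $\PV[\Hn-h]\leq \PV[\hn-h]$. Combining these two facts immediately gives \eqref{eq:kriging_equiv_var-opt}: each corollary ratio is sandwiched between~$1$ (variance optimality) and the corresponding theorem ratio (obtained by dropping a nonnegative squared bias from the numerator), and both bounds converge to~$1$ uniformly.

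For \eqref{eq:kriging_equiv_var-a} one additional step is required: setting $\delta_n:=\sup_{h\in\cH_{-n}}\bigl(\pE[(\Hn-h)^2]/\pV[\hn-h]-1\bigr)\to 0$ by Theorem~\ref{thm:kriging-equiv-measures}, the variance-level optimality yields the uniform bias bound
\[
(\pE[\Hn-h])^2
= \pE[(\Hn-h)^2]-\pV[\Hn-h]
\leq \delta_n\, \pV[\hn-h],
\]
and the analogous statement under~$\MU$ gives $(\PE[\hn-h])^2\leq \widetilde\delta_n\, \PV[\hn-h]$ with $\widetilde\delta_n\to 0$ uniformly; the uniform boundedness of $\PV[\hn-h]/\pV[\hn-h]$, which follows from the norm equivalence of $\cH^0$ and $\CH^0$ asserted in Theorem~\ref{thm:kriging-equiv-measures}, then converts this into $(\PE[\hn-h])^2/\pV[\hn-h]\to 0$ uniformly. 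Substituting these bias bounds into the decompositions $\PE[(\hn-h)^2]=\PV[\hn-h]+(\PE[\hn-h])^2$ and $\pE[(\Hn-h)^2]=\pV[\Hn-h]+(\pE[\Hn-h])^2$, and applying the triangle inequality, transforms the theorem limits \eqref{eq:kriging_equiv-a} into the corollary limits \eqref{eq:kriging_equiv_var-a}. The only conceptual point is that squared biases need not be controlled through Cameron--Martin regularity of $m-\widetilde m$; they are controlled automatically by the variance-level optimality, which in turn rests on the trivial fact $\cH_n\supset\bbR$.
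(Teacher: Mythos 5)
Your proof is correct, but it takes a genuinely different route from the paper's. The paper does not derive the corollary from Theorem~\ref{thm:kriging-equiv-measures} at all: it proves the theorem and the corollary simultaneously by observing that equivalence of $\mu$ and $\MU$ implies Assumptions~\ref{ass:kriging}.I--III with $a=1$ (via the Feldman--H\'ajek theorem) and then invoking Proposition~\ref{prop:sufficiency}, whose proof in fact establishes the variance statements \emph{first} and obtains the second-moment statements by adding a bias term that is controlled through the Cameron--Martin condition $m-\widetilde m\in H^*$. You go in the opposite direction: taking the second-moment limits \eqref{eq:kriging_equiv-opt}--\eqref{eq:kriging_equiv-a} as given, you deduce the variance limits using only two structural facts --- that $\hn$ (resp.\ $\Hn$) is unbiased under $\mu$ (resp.\ $\MU$) because $\bbR\subset\cH_n$, and that the same inclusion gives variance-level optimality $\pV[\hn-h]\le\pV[\Hn-h]$ and $\PV[\Hn-h]\le\PV[\hn-h]$ --- together with the uniform comparability $\PV[\hn-h]\asymp\pV[\hn-h]$ supplied by the norm equivalence of $\cH^0$ and $\CH^0$. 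Your key observation, that the squared biases $(\pE[\Hn-h])^2$ and $(\PE[\hn-h])^2$ are automatically dominated by $\delta_n\,\pV[\hn-h]$ with $\delta_n\to0$ once the second-moment ratios converge, is a clean self-contained argument that avoids re-entering the machinery of Proposition~\ref{prop:sufficiency}; its only cost is that it is specific to the passage from the theorem to the corollary, whereas the paper's route delivers both sets of statements (and the general $a\neq1$ versions of Theorems~\ref{thm:kriging_general_var} and~\ref{thm:kriging_general}) from one unified computation.
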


Theorem~\ref{thm:kriging-equiv-measures} 
shows that equivalence of $\mu$ and $\MU$   
is \emph{sufficient} for 
uniformly asymptotically optimal linear prediction. 
The following (less restrictive) assumptions will subsequently 
be shown to be \emph{necessary and sufficient}.

\begin{assumption}\label{ass:kriging}  
Let $\varrho,\widetilde{\varrho}\from\cX\times\cX \to \bbR$ 
be two continuous,   
(strictly) positive definite covariance functions 
with 
corresponding covariance operators 
$\cC, \CC$, defined on $L_2(\cX,\nu_\cX)$  
via \eqref{eq:kernel-C}. 
Assume that $\cC, \CC$
and  
$m,\widetilde{m} \in L_2(\cX,\nu_\cX)$ 
are such that: 
\begin{enumerate}[label=\Roman*., labelsep=5mm, topsep=4pt]
	\item\hspace*{-1.5mm}  
	The Cameron--Martin spaces 
	$H^* = \cC^{1/2}(L_2(\cX,\nu_\cX))$  
	and 
	$\widetilde{H}^* =
	\CC^{1/2}(L_2(\cX,\nu_\cX))$ 
	are norm equivalent Hilbert spaces. 
	\item\hspace*{-2mm} 
	The difference between the  
	mean value functions 
	$m,\widetilde{m} \in L_2(\cX,\nu_\cX)$  
	is an element of the 
	Cameron--Martin space, i.e.,  
	$m-\widetilde{m} \in H^*$. 
	\item\hspace*{-3mm}  
	There exists 
	a positive real number $a\in(0,\infty)$ 
	such that the operator 
	\begin{equation}\label{eq:ass:Ta}
		T_a \from L_2(\cX,\nu_\cX) \to L_2(\cX,\nu_\cX), 
		\qquad 
		T_a := \cC^{-1/2} \CC \cC^{-1/2} - a \cI  
	\end{equation}
	is compact. 
	Here and below, 
	$\cI$ denotes the identity on $L_2(\cX,\nu_\cX)$.  
\end{enumerate}  
\end{assumption}

\begin{remark}\label{rem:comparison-with-FH} 
We briefly comment on the similarities 
and differences between Assumptions~\ref{ass:kriging}.I--III
and the set of assumptions 
in the Feldman--H\'ajek theorem 
(Theorem~A.1 
for $E=L_2(\cX,\nu_\cX)$, 
see Appendix~A 
in the Supplementary Material \cite{kbsup}) 
that are necessary and sufficient 
for equivalence of two Gaussian measures 
$\mu=\normal(m, \cC)$ and 
$\MU=\normal(\widetilde{m},\CC)$. 
Note that all assumptions are identical 
except for the third. For equivalence of 
the measures $\mu$ and~$\MU$, 
the operator $T_1 = \cC^{-1/2} \CC \cC^{-1/2} - \cI$ 
has to be Hilbert--Schmidt on $L_2(\cX,\nu_\cX)$. 
Since every Hilbert--Schmidt 
operator is compact, 
this in particular implies that 
Assumption~\ref{ass:kriging}.III  
holds for $a=1$. 
This shows the greater 
generality of our Assumptions~\ref{ass:kriging}.I--III 
compared to the assumption that 
the two Gaussian measures 
$\mu$ and $\MU$ are equivalent. 
\end{remark}

\begin{proposition}\label{prop:AssI}
Let $\mu=\normal(m,\cC)$,      
$\MU=\normal(\widetilde{m},\CC)$, and 
define $\cH^0, \CH^0$ as in \eqref{eq:def:cH0} 
with respect to the measures $\mu$ and $\MU$, respectively. 
The following are equivalent: 
\begin{enumerate}[label=(\roman*), labelsep=5mm, topsep=4pt]
	\item\label{prop:AssI-i}\hspace*{-2mm}    
	Assumption~\ref{ass:kriging}.I 
	is satisfied.  
	\item\label{prop:AssI-ii}\hspace*{-3.3mm}   
	The linear operator 
	$\CC^{1/2} \cC^{-1/2}\from L_2(\cX,\nu_\cX) \to L_2(\cX,\nu_\cX)$ 
	is an isomorphism, i.e., 
	it is bounded and has     
	a bounded inverse. 
	\item\label{prop:AssI-iii}\hspace*{-3.9mm}    
	The Hilbert spaces $\cH^0,\CH^0$ 
	are norm equivalent. In particular,  
	there exist $k_0, k_1\in(0,\infty)$ such that 
	$k_0 \pV[ h ] \leq \PV[ h ] \leq k_1 \pV[ h ]$
	holds for all $h\in\cH$,  
	with $\cH$ as in~\eqref{eq:def:cH}.  
	\item\label{prop:AssI-iv}\hspace*{-4mm}     
	There exist $0<k\leq K <\infty$ 
	such that, 
	for all 
	$\{\cH_n\}_{n\in\bbN}\in\cS^{\mu}_{\mathrm{adm}}$, 
	any of the following fractions  
	is bounded from below by $k>0$ 
	and from above by $K<\infty$, 
	uniformly with respect to  
	$n\in\bbN$ and $h\in\cH_{-n}$: 
	\begin{equation}\label{eq:prop:AssI-iv}
		\frac{\PV\bigl[ \hn - h \bigr]}{\pV\bigl[ \hn - h\bigr]}, 
		\quad 
		\frac{\pV\bigl[ \Hn - h \bigr]}{\PV\bigl[ \Hn - h\bigr]}, 
		\quad 
		\frac{\pV\bigl[ \Hn - h \bigr]}{\pV\bigl[ \hn - h\bigr]}, 
		\quad 
		\frac{\PV\bigl[ \hn - h \bigr]}{\PV\bigl[ \Hn - h\bigr]}.    
	\end{equation}
	Here, $\hn, \Hn$ are the best linear predictors of $h$ 
	based on~$\cH_n$ and $\mu$ resp.~$\MU$. 
\end{enumerate}  
\end{proposition}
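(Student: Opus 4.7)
The plan is to establish the chain of equivalences (i) $\Leftrightarrow$ (ii) $\Leftrightarrow$ (iii) $\Leftrightarrow$ (iv). For (i) $\Leftrightarrow$ (ii), I would rewrite Assumption~\ref{ass:kriging}.I as a statement about bounded operators on $L_2(\cX,\nu_\cX)$. Using $\|g\|_{H^*} = \|\cC^{-1/2} g\|_{L_2(\cX,\nu_\cX)}$ together with the substitution $g = \cC^{1/2} f$ for $f \in L_2(\cX,\nu_\cX)$ (which parametrizes $H^*$ isometrically), norm equivalence of $H^*$ and $\widetilde{H}^*$ translates to $\|f\|_{L_2(\cX,\nu_\cX)} \asymp \|\CC^{-1/2} \cC^{1/2} f\|_{L_2(\cX,\nu_\cX)}$ on $L_2(\cX,\nu_\cX)$, i.e., to $\CC^{-1/2} \cC^{1/2}$ extending to a bounded isomorphism on $L_2(\cX,\nu_\cX)$. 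Its inverse is $\cC^{-1/2} \CC^{1/2}$ and the adjoint of this inverse is $\CC^{1/2} \cC^{-1/2}$; the isomorphism property propagates through these manipulations, yielding~(ii).

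For (ii) $\Leftrightarrow$ (iii) the plan is to use a stochastic-integral parametrization of $\cH^0$: for $f \in L_2(\cX,\nu_\cX)$ the Bochner integral $\GP^0(f) := \int_\cX f(x) \, \GP^0(x) \, \rd\nu_\cX(x)$ defines an element of $\cH^0$ with $\pV[\GP^0(f)] = \|\cC^{1/2} f\|_{L_2(\cX,\nu_\cX)}^2$ and, by the same Fubini computation with $\widetilde{\varrho}$ replacing $\varrho$, also $\PV[\GP^0(f)] = \|\CC^{1/2} f\|_{L_2(\cX,\nu_\cX)}^2$. Mollifying $\delta_{x}$ by $f_\varepsilon \in L_2(\cX,\nu_\cX)$ and invoking continuity of $\varrho,\widetilde{\varrho}$ shows that $\{\GP^0(f) : f \in L_2(\cX,\nu_\cX)\}$ is dense in both $\cH^0$ and $\CH^0$. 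Consequently, norm equivalence of the two spaces amounts to $\|\cC^{1/2} f\|_{L_2(\cX,\nu_\cX)}^2 \asymp \|\CC^{1/2} f\|_{L_2(\cX,\nu_\cX)}^2$ on $L_2(\cX,\nu_\cX)$, which after the substitution $h = \cC^{1/2} f$ is exactly the bounded-inverse property of $\CC^{1/2} \cC^{-1/2}$ in~(ii). Since $\pV,\PV$ do not depend on the constant part of $h \in \cH = \bbR \oplus \cH^0$, the equivalence $k_0 \pV[h] \leq \PV[h] \leq k_1 \pV[h]$ lifts from $\cH^0$ to $\cH$.

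Direction (iii) $\Rightarrow$ (iv) is bookkeeping: $k_0 \pV \leq \PV \leq k_1 \pV$ on $\cH$ bounds the first two fractions in~\eqref{eq:prop:AssI-iv} directly, and combining this with the variational characterizations of $\hn$ as $\cH_n$-minimizer of $\pV[\,\cdot - h]$ and of $\Hn$ as $\cH_n$-minimizer of $\PV[\,\cdot - h]$ yields
\[
\pV\bigl[ \Hn - h \bigr] \leq k_0^{-1} \PV\bigl[ \Hn - h \bigr] \leq k_0^{-1} \PV\bigl[ \hn - h \bigr] \leq (k_1/k_0) \, \pV\bigl[ \hn - h \bigr],
\]
plus the lower bound $\pV[\Hn - h] \geq \pV[\hn - h]$ from minimality; the analogous chain treats the fourth fraction.

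The converse (iv) $\Rightarrow$ (iii) is the main obstacle, as one must recover a global norm equivalence on $\cH$ from the uniform boundedness of the prediction-error ratios along admissible sequences only. The plan is, given an arbitrary $g \in \cH^0$ with $\pV[g] > 0$, to realize $-g$ as a prediction error $\hn - h$ inside some $\{\cH_n\} \in \cS^{\mu}_{\mathrm{adm}}$. Choose a $\pV$-orthonormal basis $\{e_j\}_{j\in\bbN}$ of $\cH^0$ with $e_1 = g/\sqrt{\pV[g]}$, fix $N \in \bbN$, and define $\cH_n^0 := \operatorname{span}\{e_2,\ldots,e_{n+1}\}$ for $n \leq N$ and $\cH_n^0 := \operatorname{span}\{e_1,\ldots,e_n\}$ for $n > N$, with $\cH_n := \bbR \oplus \cH_n^0$. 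The nested tail for $n > N$ produces density in $\cH$ and thereby $\{\cH_n\} \in \cS^{\mu}_{\mathrm{adm}}$, while the initial spaces satisfy $g \perp_{\pV} \cH_n^0$. Taking $h := g$ and $n := 1$ gives $\hn = 0$ and thus $\hn - h = -g$, so $g \in \cH_{-1}$ and the first fraction in~\eqref{eq:prop:AssI-iv} equals $\PV[g]/\pV[g]$, which lies in $[k,K]$ by~(iv). Arbitrariness of $g$ and insensitivity of $\pV,\PV$ to the $\bbR$-component of $\cH$ then deliver~(iii). The delicate point is precisely the admissibility of the tailored sequence, and it is resolved by blending a sacrificial initial segment (where $g \perp_{\pV} \cH_n^0$) with a nested density-providing tail.
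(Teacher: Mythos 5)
Your chain (i)$\,\Leftrightarrow\,$(ii)$\,\Leftrightarrow\,$(iii) and the direction (iii)$\,\Rightarrow\,$(iv) follow essentially the same route as the paper: the Cameron--Martin reparametrization for (i)$\,\Leftrightarrow\,$(ii), the isometry $f\mapsto\scalar{\GP^0,f}{L_2}$ (the paper's operator $\cJ$) for (ii)$\,\Leftrightarrow\,$(iii), and the telescoping product of ratios together with minimality of the two kriging predictors for (iii)$\,\Rightarrow\,$(iv). One technical point you gloss over in (iv)$\,\Rightarrow\,$(iii): for a general $g\in\cH^0$ the quantity $\PV[g]$ is not a priori well defined, since $\cH^0$ is a completion in the $\mu$-norm; the paper therefore runs the argument for $g\in\cZ^0$ (where $\PV[g]$ is an explicit finite sum in $\widetilde{\varrho}$) and only then passes to the closure by density. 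Your construction of the admissible sequence with a sacrificial first space orthogonal to $g$ is otherwise exactly the paper's device.

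The substantive gap is that your (iv)$\,\Rightarrow\,$(iii) argument only exploits uniform boundedness of the \emph{first} fraction in \eqref{eq:prop:AssI-iv} (realizing $-g$ as a $\mu$-prediction error makes that fraction equal to $\PV[g]/\pV[g]$). The paper proves the stronger disjunctive statement that uniform boundedness of \emph{any single one} of the four fractions already forces \ref{prop:AssI-iii}, and for the third fraction $\pV[\Hn-h]/\pV[\hn-h]$ (and the fourth) your construction gives no information: there the numerator involves the $\MU$-projection, and orthogonalizing $g$ against $\cH_1^0$ in the $\pV$-inner product does not control $\pV[\widetilde g_1-g]$. The paper handles this case by a Cleveland-type argument on two-dimensional subspaces: if $\inf_h \PV[h]/\pV[h]=0$ or $\sup_h \PV[h]/\pV[h]=\infty$, one can choose pairs $\underline{\phi}_n,\bar{\phi}_n$ with well-separated variance ratios and a one-dimensional predictor space $\operatorname{span}\{\psi_n\}$ inside their span so that the ratio of the two prediction errors is bounded below by $(\widetilde{\theta}_n+\widetilde{\Theta}_n)^2/(4\widetilde{\theta}_n\widetilde{\Theta}_n)$, which stays bounded away from $1$; embedding these into an admissible sequence contradicts boundedness of the third fraction. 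This disjunctive form is not cosmetic: it is exactly what is invoked later (via Lemma~\ref{lem:nec-Ass-I+III}) to deduce Assumption~\ref{ass:kriging}.I from each of \eqref{eq:kriging_gen_var1}--\eqref{eq:kriging_gen_var4} separately. If one reads \ref{prop:AssI-iv} as the conjunction of all four bounds, your argument closes the loop for the literal statement, but you should at least flag that the third and fourth fractions require a genuinely different, and harder, argument if they are to serve as standalone hypotheses.
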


Proposition~\ref{prop:AssI} 
elucidates the role 
of Assumption~\ref{ass:kriging}.I: 
As previously noted, 
the norm equivalence 
of the spaces $\cH^0$ and $\CH^0$ 
in~\ref{prop:AssI-iii} 
ensures that, for any $h\in\cH$, the 
best linear predictors 
$\{\Hn \}_{n\in\bbN}$   
based on  
$\{\cH_n\}_{n\in\bbN}\in\cS^\mu_{\mathrm{adm}}$ 
and the measure $\MU$ 
are well-defined. 
Furthermore, uniform 
boundedness of the fractions in \ref{prop:AssI-iv}         
guarantees that  
the sequence 
$\{\Hn\}_{n\in\bbN}$  
is $\mu$-consistent, 
\[
\lim_{n\to\infty}
\pV\bigl[ \Hn - h \bigr] 
\leq 
\sup_{\ell\in\bbN} \sup_{g\in\cH_{-\ell}} 
\frac{\pV\bigl[ \widetilde{g}_\ell - g \bigr]}{
\pV\bigl[ g_\ell - g \bigr]}
\lim_{n\to \infty} 
\pE\bigl[ (\hn - h)^2 \bigr] 	
= 
0, 
\] 
which clearly is necessary for 
asymptotically optimal linear prediction.

Including one more assumption, 
namely Assumption~\ref{ass:kriging}.III, 
yields necessary and
sufficient conditions for 
uniform asymptotic optimality 
of linear predictions, 
when the quality of the linear predictors is 
measured by the variance of the error. 
This result is formulated in 
the following theorem. 

\begin{theorem}\label{thm:kriging_general_var} 
Let $\mu=\normal(m,\cC)$ and   
$\MU=\normal(\widetilde{m},\CC)$. 
In addition, let $\hn,\Hn$ denote the best linear predictors of $h$ 
based on~$\cH_n$ and the measures~$\mu$ and~$\MU$, 
respectively. 
Then, any of the assertions,  
\begin{align} 
	\lim_{n\to\infty}
	\sup_{h\in \cH_{-n}}
	\frac{
		\pV\bigl[ \Hn - h \bigr]
	}{
		\pV\bigl[ \hn - h \bigr]
	} 
	&= 1,
	\label{eq:kriging_gen_var1} 
	\\
	\lim_{n\to\infty}
	\sup_{h\in \cH_{-n}}
	\frac{
		\PV\bigl[ \hn - h \bigr]
	}{
		\PV\bigl[ \Hn - h \bigr]
	} 
	&= 1, 
	\label{eq:kriging_gen_var2}
	\\ 
	\lim_{n\to\infty} 
	\sup_{h\in \cH_{-n}}
	\left|
	\frac{
		\PV\bigl[ \hn - h \bigr]}{
		\pV\bigl[ \hn - h \bigr]
	}
	-a \right| 
	&= 0, 
	\label{eq:kriging_gen_var3} 
	\\
	\lim_{n\to\infty} 
	\sup_{h\in \cH_{-n}}
	\left|
	\frac{
		\pV\bigl[ \Hn - h \bigr]}{
		\PV\bigl[ \Hn - h \bigr] 
	}
	- \frac{1}{a} \right| 
	&= 0,   
	\label{eq:kriging_gen_var4}
\end{align} 
holds for all 
$\{\cH_n\}_{n\in\bbN} \in \cS^\mu_{\mathrm{adm}}$
if and only if 
Assumptions~\ref{ass:kriging}.I 
and~\ref{ass:kriging}.III are fulfilled.
The constant $a\in(0,\infty)$  
in \eqref{eq:kriging_gen_var3} and \eqref{eq:kriging_gen_var4} 
is the same as 
that in~\eqref{eq:ass:Ta} 
of Assumption~\ref{ass:kriging}.III. 
\end{theorem}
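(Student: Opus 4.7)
My plan is to pass to an $L_2(\cX,\nu_\cX)$-picture via the canonical isometry $U\colon\cH^0\to L_2(\cX,\nu_\cX)$ obtained by composing the RKHS-isometry $\iota\colon H^*\to\cH^0$ (sending $\varrho(\,\cdot\,,x)$ to $\GP^0(x)$) with $\cC^{1/2}$. Under this identification $\pV[h^0]=\|Uh^0\|_{L_2}^2$, and Proposition~\ref{prop:AssI}\ref{prop:AssI-ii} ensures that, under Assumption~\ref{ass:kriging}.I, the map $B:=\CC^{1/2}\cC^{-1/2}$ is a bounded isomorphism on $L_2(\cX,\nu_\cX)$ satisfying $B^*B=\cC^{-1/2}\CC\cC^{-1/2}=a\cI+T_a$ and $\PV[h^0]=\langle B^*B\,Uh^0,Uh^0\rangle_{L_2}$. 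Writing $w=Uh^0$, $W_n=U(\cH_n^0)$, and letting $P_n$ denote the $L_2$-orthogonal projection onto $W_n$, admissibility of $\{\cH_n\}$ translates to $P_n\to\cI$ strongly on $L_2(\cX,\nu_\cX)$. The $\mu$-predictor satisfies $U(\hn-h)=-u_n$ with $u_n:=(\cI-P_n)w\in W_n^\perp$, while the $\MU$-predictor is determined by the $B^*B$-orthogonality $B^*B(w-U\Hn)\perp W_n$ in $L_2(\cX,\nu_\cX)$: $U\Hn=P_nw+d_n$, where $d_n\in W_n$ solves $(a\cI+P_nT_aP_n)\rvert_{W_n}\,d_n=P_nT_au_n$.

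For sufficiency (Assumptions~I and~III $\Rightarrow$ all four assertions), I invoke the standard fact that if $T$ is compact and $P_n\to\cI$ strongly on a Hilbert space, then $\|T(\cI-P_n)\|_{\mathrm{op}}\to 0$. Applied to the compact, self-adjoint operator $T_a$, this yields $\|T_a(\cI-P_n)\|_{\mathrm{op}}\to 0$, and hence from
\[
\frac{\PV[\hn-h]}{\pV[\hn-h]}
=a+\frac{\langle T_au_n,u_n\rangle_{L_2}}{\|u_n\|_{L_2}^2},
\qquad u_n\in W_n^\perp\setminus\{0\},
\]
assertion~\eqref{eq:kriging_gen_var3} holds uniformly over $h\in\cH_{-n}$. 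Positive-definiteness of $B^*B$ (from Assumption~I) combined with the normal equation yields $\|d_n\|_{L_2}/\|u_n\|_{L_2}\leq C\,\|T_a(\cI-P_n)\|_{\mathrm{op}}\to 0$, and this together with the Pythagorean decompositions $\pV[\Hn-h]=\pV[\hn-h]+\|d_n\|_{L_2}^2$ (orthogonality in $L_2$) and $\PV[\hn-h]=\PV[\Hn-h]+\|Bd_n\|_{L_2}^2$ (orthogonality in the $B^*B$-inner product) delivers the remaining assertions \eqref{eq:kriging_gen_var1}, \eqref{eq:kriging_gen_var2}, \eqref{eq:kriging_gen_var4} by elementary rearrangement.

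For necessity I argue by contrapositive. If Assumption~\ref{ass:kriging}.I fails, Proposition~\ref{prop:AssI}\ref{prop:AssI-iii} supplies $\{g_k\}\subset\cH^0$ with $\pV[g_k]=1$ and either $\PV[g_k]\to 0$ or $\PV[g_k]\to\infty$. Starting from a baseline admissible sequence $\{\cK_n\}$ (e.g.\ the point-observation sequence of Example~\ref{ex:admissible}), one perturbs each $\cK_n$ so that the resulting $n$-dimensional $\cH_n^0$ is $\cH^0$-orthogonal to $g_n$ while still generating a dense system in $\cH^0$. Setting $h_n:=g_n$ forces $\hn^0=0$, so the ratios in \eqref{eq:kriging_gen_var1}--\eqref{eq:kriging_gen_var4} collapse to $\PV[g_n]/\pV[g_n]$ or its reciprocal, contradicting any of the four uniform limits. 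Once Assumption~I is secured, transcription into the $L_2$-picture shows that failure of Assumption~\ref{ass:kriging}.III for the value $a$ appearing in the assertion is equivalent to the self-adjoint operator $T_a$ being non-compact; by the spectral theorem there is then an orthonormal sequence $\{w_k\}\subset L_2(\cX,\nu_\cX)$ with $|\langle T_aw_k,w_k\rangle|\not\to 0$, and a parallel diagonal construction placing each $w_{k(n)}$ into $W_n^\perp$ while keeping $\{W_n\}$ dense refutes \eqref{eq:kriging_gen_var3}, with analogous arguments for the other three assertions.

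The main obstacle will be the necessity direction, in particular the diagonal constructions of admissible sequences $\{\cH_n\}\in\cS^\mu_{\mathrm{adm}}$ that are adversarial at each stage~$n$ while satisfying \eqref{eq:ass:Hn-dense} in the limit; realising the prescribed subspaces $W_n$ as spans of honest centered observations in the sense of \eqref{eq:def:cHn} adds a further bookkeeping difficulty.
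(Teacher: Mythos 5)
Your sufficiency argument is sound and is essentially the paper's proof of Proposition~\ref{prop:sufficiency} in a cleaner packaging: the identity $\PV[\hn-h]-a\pV[\hn-h]=\scalar{T_aQ_n^\perp w,Q_n^\perp w}{L_2(\cX,\nu_\cX)}$ is exactly what the paper derives, and your appeal to the standard fact that $\|T_a(\cI-P_n)\|\to 0$ for compact $T_a$ and strongly convergent projections replaces the paper's explicit $P_J/P_J^\perp$ splitting; your normal-equation bound on $d_n$ and the two Pythagorean identities then give \eqref{eq:kriging_gen_var1}, \eqref{eq:kriging_gen_var2}, \eqref{eq:kriging_gen_var4} directly, where the paper instead swaps the roles of $\mu$ and $\MU$ (using that $\CC^{-1/2}\cC\CC^{-1/2}-a^{-1}\cI$ is compact and $\cS^\mu_{\mathrm{adm}}=\cS^{\MU}_{\mathrm{adm}}$).

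The necessity direction, however, has a genuine gap that your closing paragraph correctly anticipates but does not resolve. First, the step ``one perturbs each $\cK_n$ so that the resulting $\cH_n^0$ is $\cH^0$-orthogonal to $g_n$ while still generating a dense system'' is not possible for arbitrary witnesses $g_n$ of the failure of norm equivalence: if the $g_n$ do not converge weakly to zero (e.g.\ $g_n\equiv g$ fixed), then any sequence of subspaces orthogonal to $g_n$ cannot satisfy \eqref{eq:ass:Hn-dense} for $h=g$. One must first show that the adversarial behaviour \emph{persists on the tails}, i.e.\ that the quadratic form $\cC^{-1/2}\CC\cC^{-1/2}$ remains unbounded (or non-coercive) on every finite-codimension subspace $E_n^\perp$, so that the witnesses can be taken orthogonal to $\gp_1,\ldots,\gp_{n-1}$ and inserted into the admissible sequences $\cH_n^\star=\bbR\oplus\operatorname{span}\{\gp_1,\ldots,\gp_{n-1},\psi_n\}$; this tail-localization is precisely the content of the paper's Lemmas~B.2 and~B.4 and is where the real work sits. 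Second, for \eqref{eq:kriging_gen_var1} and \eqref{eq:kriging_gen_var2} no constant $a$ appears, so the contrapositive is not ``$T_a$ non-compact for the given $a$'' but ``$T_a$ non-compact for \emph{every} $a\in(0,\infty)$''; one must extract from this a uniform spread $\overline{a}_n-\underline{a}_n\geq\delta$ of approximate eigenvalues in each tail and then convert it, via the two-dimensional Cleveland-type construction yielding $\PE[(h_1^{(n)}-h^{(n)})^2]/\PE[(\widetilde{h}_1^{(n)}-h^{(n)})^2]=(\widetilde{\theta}_n+\widetilde{\Theta}_n)^2/(4\widetilde{\theta}_n\widetilde{\Theta}_n)$, into a quantitative failure of the ratio of two $\MU$-variances. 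Your shortcut ``the ratios collapse to $\PV[g_n]/\pV[g_n]$ or its reciprocal'' applies only to \eqref{eq:kriging_gen_var3}/\eqref{eq:kriging_gen_var4}-type ratios (after forcing $\hn=0$ resp.\ $\Hn=0$ by orthogonality in the appropriate inner product); it does not touch \eqref{eq:kriging_gen_var1} or \eqref{eq:kriging_gen_var2}, where the numerator and denominator involve the \emph{same} measure and a one-dimensional orthogonality trick gives only the trivial ratio $1$.
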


\begin{remark} 
Theorem~\ref{thm:kriging_general_var} 
shows, in particular, that either all of the four asymptotic statements 
\eqref{eq:kriging_gen_var1}--\eqref{eq:kriging_gen_var4} 
hold simultaneously or none of them 
are true. 
\end{remark} 

Finally, 
when measuring 
the quality of the linear predictors 
in terms of the mean squared error, 
additionally the behavior of 
the difference $m-\widetilde{m}$ 
between the mean value functions 
matters,   
and all three 
of Assumptions~\ref{ass:kriging}.I--III 
are necessary and sufficient 
for uniform asymptotic optimality 
in this sense. 
This characterization is formulated 
in Theorem~\ref{thm:kriging_general} 
which is our main result.

\begin{theorem}\label{thm:kriging_general} 
Let $\mu=\normal(m,\cC)$ and   
$\MU=\normal(\widetilde{m},\CC)$. 
In addition, let $\hn,\Hn$ denote the best linear predictors of $h$ 
based on~$\cH_n$ and the measures~$\mu$ and~$\MU$, 
respectively. 
Then, any of the assertions,  
\begin{align}
	\lim_{n\to\infty}
	\sup_{h\in \cH_{-n}}
	\frac{
		\pE\bigl[ ( \Hn - h)^2 \bigr]
	}{
		\pE\bigl[ ( \hn - h)^2 \bigr]
	} 
	&=1,  
	\label{eq:kriging_gen1} 
	\\
	\lim_{n\to\infty}
	\sup_{h\in \cH_{-n}}
	\frac{
		\PE\bigl[ ( \hn - h)^2 \bigr]
	}{
		\PE\bigl[ ( \Hn - h)^2 \bigr]
	} 
	&= 1, 
	\label{eq:kriging_gen2} 
	\\	
	\lim_{n\to\infty}
	\sup_{h\in \cH_{-n}}
	\left|
	\frac{
		\PE\bigl[ ( \hn - h)^2 \bigr]
	}{
		\pE\bigl[ ( \hn - h)^2 \bigr]
	} - a 
	\right| 
	&=0, 
	\label{eq:kriging_gen3} 
	\\ 
	\lim_{n\to\infty}
	\sup_{h\in \cH_{-n}}
	\left|
	\frac{
		\pE\bigl[ ( \Hn - h)^2 \bigr]
	}{
		\PE\bigl[ ( \Hn - h)^2 \bigr]
	} - \frac{1}{a} 
	\right| 
	&= 0, 
	\label{eq:kriging_gen4} 
\end{align}
holds for all 
$\{\cH_n\}_{n\in\bbN} \in \cS^\mu_{\mathrm{adm}}$
if and only if 
Assumptions~\ref{ass:kriging}.I--III 
are satisfied.
The constant $a\in(0,\infty)$  
in \eqref{eq:kriging_gen3} and \eqref{eq:kriging_gen4} 
is the same as 
that in~\eqref{eq:ass:Ta} 
of Assumption~\ref{ass:kriging}.III. 
\end{theorem}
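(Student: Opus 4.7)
The plan is to reduce Theorem~\ref{thm:kriging_general} to the variance case (Theorem~\ref{thm:kriging_general_var}, which characterizes the variance ratios via Assumptions~I and~III) together with a uniform bias estimate capturing Assumption~II. The starting point is the bias-variance decomposition: since $\hn$ is $\mu$-unbiased and $\Hn$ is $\MU$-unbiased, $\pE[(\hn-h)^2] = \pV[\hn-h]$ and $\PE[(\Hn-h)^2] = \PV[\Hn-h]$, so each of the four ratios in \eqref{eq:kriging_gen1}--\eqref{eq:kriging_gen4} splits, e.g.,
\[
\frac{\pE[(\Hn-h)^2]}{\pE[(\hn-h)^2]} \;=\; \frac{\pV[\Hn - h]}{\pV[\hn - h]} + \frac{B_n^2}{\pV[\hn - h]}
\]
with $B_n := \pE[\Hn - h]$, and symmetrically the remaining three ratios split into a variance ratio plus a non-negative term $\widetilde{B}_n^2/\PV[\Hn-h]$ (or an analogue) with $\widetilde{B}_n := \PE[\hn - h]$.

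The key lemma expresses the bias as a reproducing-kernel Hilbert space pairing. Let $I \colon \cH^0 \to H^*$ denote the canonical isometry sending $\GP^0(x) \mapsto \varrho(x,\cdot)$, where $H^* = \cC^{1/2}(L_2(\cX,\nu_\cX))$ is simultaneously the Cameron--Martin space of $\mu$ and the RKHS of $\varrho$. Assuming $\psi := m - \widetilde{m} \in H^*$, the reproducing property delivers
\[
\PE[\xi] \;=\; -\langle I(\xi), \psi \rangle_{H^*} \qquad \forall\, \xi \in \cH^0,
\]
first verified on finite sums $\xi = \sum_j a_j \GP^0(x_j)$ (where it reads $\sum_j a_j(\widetilde{m}(x_j) - m(x_j)) = -\langle I(\xi), \psi\rangle_{H^*}$) and extended by continuity. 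Applying this to $\xi = \hn - h \in \cH^0$ (which is $\cH^0$-orthogonal to $\cH_n^0$ by the projection characterization of $\hn$) and invoking Cauchy--Schwarz produces
\[
\widetilde{B}_n^2 \;\leq\; \pV[\hn - h]\cdot \|P_{I(\cH_n^0)^\perp} \psi\|_{H^*}^2 ,
\]
in which the projection factor is independent of $h$. The analogous estimate for $B_n^2/\PV[\Hn - h]$ is obtained by swapping the roles of $\mu$ and $\MU$, using Assumption~I to secure $\psi \in \widetilde{H}^*$.

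Sufficiency follows directly: admissibility forces $\cH_n^0$ to be dense in $\cH^0$, hence $I(\cH_n^0)$ dense in $H^*$ and $\|P_{I(\cH_n^0)^\perp}\psi\|_{H^*}\to 0$; Theorem~\ref{thm:kriging_general_var} supplies the variance-ratio convergence and Proposition~\ref{prop:AssI} permits trading between $\pV$- and $\PV$-variances. For necessity I would focus on \eqref{eq:kriging_gen1}: since $\pV[\Hn-h] \geq \pV[\hn-h]$ by minimality of $\hn$, the summands satisfy $\pV[\Hn-h]/\pV[\hn-h] \geq 1$ and $B_n^2/\pV[\hn-h] \geq 0$, so uniform convergence of their sum to $1$ forces separately $\sup_h \pV[\Hn-h]/\pV[\hn-h] \to 1$ (which is \eqref{eq:kriging_gen_var1}, yielding Assumptions~I and~III via Theorem~\ref{thm:kriging_general_var}) and the uniform bias decay $\sup_{h\in\cH_{-n}} B_n^2/\pV[\hn-h] \to 0$. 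The remaining statements \eqref{eq:kriging_gen2}--\eqref{eq:kriging_gen4} then follow from the already-established sufficiency, yielding the full equivalence.

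The main obstacle is deducing Assumption~II from the uniform bias decay; everything else is a routine consequence of the bias-variance split together with Theorem~\ref{thm:kriging_general_var}. I would argue by contradiction: if $\psi \notin H^*$ then, using the orthonormal eigenbasis $\{e_j\}$ of $\cC$ from Mercer's theorem, one constructs an admissible sequence $\{\cH_n\} \in \cS^\mu_{\mathrm{adm}}$ from linear observations aligned with $\{e_j\}$ and selects test elements $h \in \cH_{-n}$ whose residuals $\hn - h$ are supported on the eigenmodes where $\sum_j b_j^2 / \gamma_j$ diverges (with $\psi = \sum_j b_j e_j$ the $L_2$-expansion). Inverting the RKHS pairing on such a configuration then shows that $\widetilde{B}_n^2/\pV[\hn-h]$ stays bounded away from zero, contradicting the hypothesis. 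The delicate point is realizing this abstract Hilbert-space obstruction within the set $\cS^\mu_{\mathrm{adm}}$, whose members arise from genuine linear functionals of $\GP$ rather than arbitrary vectors of $\cH$.
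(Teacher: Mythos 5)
Your overall strategy is the one the paper itself uses: the bias--variance split $\pE[(\Hn-h)^2]=\pV[\Hn-h]+(\pE[\Hn-h])^2$, the identification of the bias with an $H^*$-pairing against $\psi=m-\widetilde m$ (your $\|P_{I(\cH_n^0)^\perp}\psi\|_{H^*}$ is exactly the paper's $\|Q_n^\perp\cC^{-1/2}(m-\widetilde m)\|_{L_2}$ in the proof of Proposition~\ref{prop:sufficiency}), the reduction of the variance part to Theorem~\ref{thm:kriging_general_var}, and the contradiction construction for Assumption~\ref{ass:kriging}.II via an unbounded functional $v\mapsto\scalar{m-\widetilde m,v}{L_2}$ on $H$ realized along an admissible sequence built from the eigenbasis (this is precisely Lemma~\ref{lem:mean_way_back}). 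The sufficiency half and the necessity of I--III from \eqref{eq:kriging_gen1} are sound modulo routine details.

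The genuine gap is in the scope of your necessity argument. The theorem asserts that \emph{each} of \eqref{eq:kriging_gen1}--\eqref{eq:kriging_gen4} individually (holding for all admissible sequences) is equivalent to Assumptions~I--III, so you must prove four separate necessity implications; your plan proves only \eqref{eq:kriging_gen1}$\Rightarrow$I--III, and the claim that ``the remaining statements then follow from the already-established sufficiency'' is a non sequitur --- sufficiency only gives I--III$\Rightarrow$\eqref{eq:kriging_gen2}--\eqref{eq:kriging_gen4}, which does not close the loop for those statements. For \eqref{eq:kriging_gen2} the same sign-based separation works by symmetry (one summand is $\geq 1$, the other $\geq 0$). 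But for \eqref{eq:kriging_gen3} and \eqref{eq:kriging_gen4} the decomposition reads, e.g., $\PE[(\hn-h)^2]/\pE[(\hn-h)^2]=\PV[\hn-h]/\pV[\hn-h]+|\PE[\hn-h]|^2/\pV[\hn-h]$, and convergence of the \emph{sum} to $a$ does not force the summands to converge separately: the variance ratio could sit below $a$ while the bias term compensates. The paper needs a different chain here: uniform boundedness of the ratio first yields Assumption~I via Proposition~\ref{prop:AssI}\ref{prop:AssI-i}$\Leftrightarrow$\ref{prop:AssI-iv}, then uniform boundedness of the bias term together with Lemma~\ref{lem:mean_way_back} forces Assumption~II, and only then does the bias term vanish so that the variance ratio converges to $a$ and Lemma~\ref{lem:nec-Ass-I+III} delivers Assumption~III. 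A second, smaller point: for \eqref{eq:kriging_gen1} the bias is $\pE[\Hn-h]$ with $\Hn-h$ orthogonal to $\cH_n$ in the $\MU$-geometry, so its natural Cauchy--Schwarz bound and the corresponding contradiction construction live in $\widetilde H^*$ and $\CH^0$, not in the $\cC$-eigenbasis you describe; you need to first secure Assumption~I and then transfer via $H^*=\widetilde H^*$ (or swap the roles of $\mu$ and $\MU$, as the paper does via Lemma~\ref{lem:S-mu}).
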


\section{Proofs of the results}\label{section:proofs}
%
Throughout this section,  
we abbreviate $L_2(\cX,\nu_\cX)$ by $L_2$, 
$\cL(L_2)$ is 
the space of bounded linear operators 
on $L_2$ and  
the subspaces $\cK(L_2)\subset\cL(L_2)$ as well as 
$\cL_2(L_2)\subset\cL(L_2)$  
contain all compact and  
Hilbert--Schmidt operators, respectively  
(see Appendix~A in the Supplementary Material~\cite{kbsup}). 

Recall that $\GP^0=\GP-m$, 
where $\GP$ is a Gaussian process 
on $(\cX,d_\cX)$ with corresponding  
Gaussian measure $\mu=\normal(m,\cC)$, and 
that $\{e_j\}_{j\in\bbN}$ 
is an orthonormal basis for~$L_2$ 
consisting of (the continuous representatives of)  
eigenfunctions of the covariance operator~$\cC$, 
with corresponding positive eigenvalues 
$\{\gamma_j\}_{j\in\bbN}$. 
In the next lemma a relation   
between the (dual of the)  
Cameron--Martin space for~$\mu$ 
and the Hilbert space $\cH^0$ in \eqref{eq:def:cH0} 
is established, 
similarly as in 
\cite[][Theorem~5D]{Parzen1959} 
or 
\cite[][Theorem~35]{Berlinet2004}. 
This relation will be crucial for proving~all~results. 

\begin{lemma}\label{lem:cH-ONB}
For each $j\in\bbN$, 
define $v_j := \tfrac{1}{\sqrt{\gamma_j}} e_j$ 
as well as the 
real-valued 
random variable 
$\gp_j := \scalar{\GP^0, v_j}{L_2}$. 
Then, the following hold: 
\begin{enumerate}[label=(\roman*), labelsep=5mm, topsep=4pt] 
	\item\label{lem:cH-ONB-i}\hspace*{-2.5mm}  
	$\{v_j\}_{j\in\bbN}$ 
	is an orthonormal basis for  
	$H  := \cC^{-1/2}(L_2)$, 
	with 
	$\scalar{v, v'}{H} 
	:= 
	\scalar{\cC v, v'}{L_2}$, 
	which is 
	the dual of the Cameron--Martin space
	$\dual{H} = \cC^{1/2}(L_2)$. 
	\item\label{lem:cH-ONB-ii}\hspace*{-3.4mm}   
	$\{\gp_j\}_{j\in\bbN}$  
	is an orthonormal basis 
	for the Hilbert space~$\cH^0$  
	equipped with the inner product 
	$\scalar{\,\cdot\,, \,\cdot\,}{\cH^0} 
	=
	\pE[\,\cdot\,  \,\cdot\,]
	=
	\Cov[\,\cdot\,, \,\cdot\,]$,  
	see \eqref{eq:def:cH0}.  
	\item\label{lem:cH-ONB-iii}\hspace*{-3.8mm}    
	The linear operator  
	\begin{equation}\label{eq:cJ-iso:def}     
		\cJ \from H \to \cH^0,
		\qquad 
		v \mapsto \scalar{\GP^0 , v}{L_2}, 
	\end{equation}
	is a well-defined isometric isomorphism and, 
	for all $v,v' \in H$, we have 
	\begin{equation}\label{eq:cJ-iso:scalar} 
		\scalar{v, v'}{H}  
		= 
		\scalar{\cC v, v'}{L_2}
		= 
		\Cov\bigl[ \cJ v, \cJ v' \bigr] 
		= 
		\pE\bigl[ \cJ v \cJ v' \bigr] 
		= 
		\scalar{\cJ v, \cJ v'}{\cH^0} 
		.
	\end{equation}
\end{enumerate}
\end{lemma}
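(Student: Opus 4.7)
The plan is to establish (i) and (ii) as parallel orthonormal basis constructions in $H$ and $\cH^0$, respectively, and then obtain (iii) as the isometric extension of the correspondence $v_j \mapsto \gp_j$. For (i), the spectral relation $\cC e_j = \gamma_j e_j$ gives $\cC v_j = \sqrt{\gamma_j}\,e_j$, and hence
\[
\scalar{v_i, v_j}{H}
=
\scalar{\cC v_i, v_j}{L_2}
=
\scalar{\sqrt{\gamma_i}\,e_i,\,\gamma_j^{-1/2} e_j}{L_2}
=
\delta_{ij}.
\]
For completeness of $\{v_j\}_{j\in\bbN}$ in $H$, any $v = \cC^{-1/2} f \in H$ with $f = \sum_j f_j e_j \in L_2$ admits the expansion $v = \sum_j f_j v_j$ converging in $H$, since $\|v\|_H^2 = \|\cC^{1/2} v\|_{L_2}^2 = \|f\|_{L_2}^2 < \infty$. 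That $H$ is the dual of the Cameron--Martin space $H^* = \cC^{1/2}(L_2)$ is then read off from $\scalar{\cC^{1/2} w, v}{L_2} = \scalar{w, \cC^{1/2} v}{L_2}$ and the fact that $\cC^{1/2}\from L_2 \to H^*$ is an isometric isomorphism.

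For (ii), I would interpret $\gp_j$ rigorously as the Bochner integral $\int_\cX v_j(y)\,\GP^0(y)\,\rd\nu_\cX(y)$, whose existence is ensured by compactness of $\cX$, continuity of $v_j$, and mean-square continuity of $\GP^0$ (since continuity of $\varrho$ yields $\|\GP^0(x)-\GP^0(x')\|_{L_2(\Omega,\bbP)}^2 = \varrho(x,x)+\varrho(x',x')-2\varrho(x,x')$). Approximating this integral by Riemann sums in $L_2(\Omega,\bbP)$ places $\gp_j$ in $\cZ^0 \subset \cH^0$. Applying Fubini together with $\pE[\GP^0(x)\GP^0(y)] = \varrho(x,y)$ reduces the cross moments to $\pE[\gp_i \gp_j] = \scalar{v_i, \cC v_j}{L_2} = \delta_{ij}$. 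Completeness follows from Mercer's theorem:
\[
\pE\biggl[\Bigl(\GP^0(x) - \sum_{j=1}^N \sqrt{\gamma_j}\,e_j(x)\,\gp_j\Bigr)^2\biggr]
=
\varrho(x,x) - \sum_{j=1}^N \gamma_j\,e_j(x)^2 \longrightarrow 0
\quad (N\to\infty)
\]
by \eqref{eq:varrho-series}, so every $\GP^0(x)$, every element of $\cZ^0$, and by closure all of $\cH^0$, lies in the closed $\cH^0$-span of $\{\gp_j\}_{j\in\bbN}$.

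For (iii), parts (i) and (ii) supply orthonormal bases of $H$ and $\cH^0$, so the map determined by $v_j \mapsto \gp_j$ extends uniquely by linearity and continuity to an isometric isomorphism $\cJ\from H \to \cH^0$; identity \eqref{eq:cJ-iso:scalar} then follows by polarization together with $\scalar{\,\cdot\,,\,\cdot\,}{\cH^0} = \Cov[\,\cdot\,,\,\cdot\,] = \pE[\,\cdot\,\,\cdot\,]$ on centered random variables. To verify that $\cJ$ coincides with the prescription $v \mapsto \scalar{\GP^0, v}{L_2}$ of \eqref{eq:cJ-iso:def}, I would expand $v \in L_2$ as $v = \sum_j \scalar{v, e_j}{L_2} e_j$ and use $\scalar{\GP^0, e_j}{L_2} = \sqrt{\gamma_j}\,\gp_j$ (a special case of the Bochner computation above) to obtain $\scalar{\GP^0, v}{L_2} = \sum_j \scalar{v, v_j}{H}\,\gp_j = \cJ v$. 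The main subtlety I anticipate is attaching a meaning to $\scalar{\GP^0, v}{L_2}$ when $v$ lies in $H$ but not in $L_2$, which is resolved precisely by this density/continuity extension.
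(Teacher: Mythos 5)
Your proposal is correct. Parts (i) and (iii) follow essentially the same route as the paper: orthonormality of $\{v_j\}$ is inherited from $\{e_j\}$ via $v_j=\cC^{-1/2}e_j$, and $\cJ$ is the isometry determined by $v_j\mapsto\gp_j$ (the paper simply observes $\cJ v_j=\gp_j$ directly, while you build $\cJ$ by extension and then check it agrees with $v\mapsto\scalar{\GP^0,v}{L_2}$; both resolve the same subtlety of interpreting $\scalar{\GP^0,v}{L_2}$ for $v\in H\setminus L_2$ by density). The genuine difference is in the completeness of $\{\gp_j\}_{j\in\bbN}$ in $\cH^0$. The paper argues dually: if $h\in\cH^0$ is orthogonal to every $\gp_j$, then Fubini gives $\scalar{\pE[h\,\GP^0(\,\cdot\,)],e_j}{L_2}=0$ for all $j$, and continuity of $x\mapsto\pE[h\,\GP^0(x)]$ together with strict positive definiteness of $\varrho$ forces $h=0$. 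You instead argue primally via the Karhunen--Lo\`eve expansion, using Mercer's theorem to show $\pE[(\GP^0(x)-\sum_{j\le N}\sqrt{\gamma_j}\,e_j(x)\gp_j)^2]=\varrho(x,x)-\sum_{j\le N}\gamma_j e_j(x)^2\to 0$, so that every generator $\GP^0(x)$ of $\cZ^0$ lies in the closed span of $\{\gp_j\}$. Both are standard and valid; your route is more constructive (it exhibits the expansion explicitly and does not invoke strict positive definiteness at this step), while the paper's is shorter given that it outsources the membership $\gp_j\in\cH^0$ to a supplementary lemma, which you reprove from scratch via Bochner integrals and Riemann sums. One cosmetic slip: the Riemann-sum limit places $\gp_j$ in the closure $\cH^0=\overline{\cZ^0}$, not in $\cZ^0$ itself; this does not affect the argument.
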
 

\begin{proof}
Since $L_2\subset H$ and   
$v_j=\cC^{-1/2}e_j$, 
claim~\ref{lem:cH-ONB-i}---orthonormality and the basis property of 
$\{v_j\}_{j\in\bbN}$ in $H$---follows directly 
from the corresponding properties of
$\{e_j\}_{j\in\bbN}$ in $L_2$. 

To prove the second assertion~\ref{lem:cH-ONB-ii}, 
we first note that  
$v_j  \from \cX\to\bbR$ 
is continuous for every $j\in\bbN$. 
Thus, by 
Lemma~B.3
(see Appendix~B in the Supplementary Material~\citep{kbsup})
we find that  
$\gp_j\in \cH^0$ 
for all $j\in\bbN$. 
Next, we prove that $\{\gp_j\}_{j\in\bbN}$ 
constitutes an orthonormal 
basis for $\cH^0$. 
For this, we need 
to show that $\scalar{\gp_i, \gp_j}{\cH^0}=\delta_{ij}$ 
(\emph{orthonormality})
and 
\[
\scalar{h, \gp_j}{\cH^0}=0 
\quad \forall j\in\bbN 
\quad 
\Rightarrow 
\quad 
h = 0 
\qquad 
\text{(\emph{basis poperty})}. 
\]
Due to the identities  
$\pE[\gp_i \gp_j] 
= 
\Cov[ \scalar{\GP^0, v_i}{L_2}, \scalar{\GP^0, v_j}{L_2}] 
= 
\scalar{\cC v_i, v_j}{L_2}
= 
\scalar{v_i, v_j}{H}$, 
ortho- normality follows from~\ref{lem:cH-ONB-i}.
Now let $h\in\cH^0$ be such that  
$\scalar{h, \gp_j}{\cH^0}=0$ vanishes 
for all~$j\in\bbN$.\pagebreak   

\noindent 
By Fubini's 	
theorem we then obtain that, 
for all $j\in\bbN$, 
\[ 
0 = 
\pE[h \scalar{\GP^0,e_j}{L_2}] 
= 
\int_\cX 
\pE[h \GP^0(x)] e_j(x) \, \rd\nu_\cX(x) 
= 
\scalar{\pE[h \GP^0(\,\cdot\,)], e_j}{L_2}. 
\] 
Since $\{e_j\}_{j\in\bbN}$ is an orthonormal 
basis for $L_2$  
and 
the mapping $\cX\ni x\mapsto \pE[h \GP^0(x)] \in\bbR$ 
is continuous, 
this shows that 
$\pE[h \GP^0(x)]=0$
for all $x\in\cX$, 
which implies 
(due to strict positive definiteness 
of~$\varrho$)
that $h\in\cH^0$ 
has to vanish. 
We conclude~\ref{lem:cH-ONB-ii}, 
$\{\gp_j\}_{j\in\bbN}$ 
is an orthonormal basis 
for $\cH^0$.  

It remains to prove~\ref{lem:cH-ONB-iii}. 
Clearly, $\cJ v_j = \gp_j$ for all $j\in\bbN$. 
Thus, the linear mapping $\cJ\from H \to \cH^0$ is  
well-defined and an 
isometry, since by~\ref{lem:cH-ONB-i} and~\ref{lem:cH-ONB-ii} 
$\{v_j\}_{j\in\bbN}$ and $\{\gp_j\}_{j\in\bbN}$ 
are orthonormal 
bases for $H$ and $\cH^0$, respectively. 
Furthermore, 
$\scalar{v, v'}{H} 
= 
\scalar{\cC v, v'}{L_2} 
=
\Cov[ \scalar{\GP^0, v}{L_2}, \scalar{\GP^0, v'}{L_2} ]
=
\Cov[ \cJ v, \cJ v' ]
=
\pE[ \cJ v \cJ v' ] 
=
\scalar{\cJ v, \cJ v'}{\cH^0}$ 
for all $v,v'\in H$, 
completing the proof of~\ref{lem:cH-ONB-iii}. 
\end{proof}

\begin{proof}[Proof of Proposition~\ref{prop:AssI}] 
We first show   
\ref{prop:AssI-i}$\,\Rightarrow\,$\ref{prop:AssI-ii}$\,\Rightarrow\,$\ref{prop:AssI-iii}$\,\Rightarrow\,$\ref{prop:AssI-i},  
followed by the proof of the equivalence  
\ref{prop:AssI-iii}$\,\Leftrightarrow\,$\ref{prop:AssI-iv}:   	

\ref{prop:AssI-i}$\,\Rightarrow\,$\ref{prop:AssI-ii}: 
Under Assumption~\ref{ass:kriging}.I, 
the norms on $H^*$ and $\widetilde{H}^*$ 
are equivalent, i.e., 
there are $c_0,c_1\in(0,\infty)$ 
such that 
$\norm{\cC^{-1/2} u}{L_2} 
\leq 
c_0 \norm{\CC^{-1/2} u}{L_2}$ 
and 
$\norm{\CC^{-1/2} u}{L_2} 
\leq c_1 \norm{\cC^{-1/2} u}{L_2}$ 
for all $u \in H^*=\widetilde{H}^*$. 
Thus, for any $w\in L_2$,  
$\CC^{1/2}w \in H^*$ 
with 
$\norm{\cC^{-1/2}\CC^{1/2} w}{L_2}
\leq 
c_0 \norm{w}{L_2}$ 
and, in addition, 
$\cC^{1/2}w \in \widetilde{H}^*$ with  
$\norm{\CC^{-1/2}\cC^{1/2} w}{L_2}
\leq 
c_1 \norm{w}{L_2}$. 
This shows 
that $\cC^{-1/2}\CC^{1/2}$ and, thus, also its 
adjoint $\CC^{1/2}\cC^{-1/2}$ are isomorphisms 
on $L_2$, 
and \ref{prop:AssI-ii} follows. 

\ref{prop:AssI-ii}$\,\Rightarrow\,$\ref{prop:AssI-iii}: 
Let the Hilbert space $H$ and the isometry 
$\cJ \from H \to \cH^0$ 
be defined as in Lemma~\ref{lem:cH-ONB}\ref{lem:cH-ONB-iii}. 
In addition, define 
$\widetilde{H}:=\CC^{-1/2}(L_2)$  
and the isometry 
$\widetilde{\cJ}\from \widetilde{H} \to \CH^0$  
in the obvious analogous way. 
If 
$c_0 := \norm{\CC^{1/2}\cC^{-1/2}}{\cL(L_2)}$ 
and  
$c_1 := \norm{\cC^{1/2}\CC^{-1/2}}{\cL(L_2)}$  
are finite, 
then $H$ and $\widetilde{H}$ are norm equivalent, 
i.e., 
$c_1^{-1} \norm{v}{H}
\leq \norm{v}{\widetilde{H}} 
\leq c_0 \norm{v}{H}$ 
holds for all $v\in H=\widetilde{H}$, 
and 
the inclusion mapping $\cI_{H\to\widetilde{H}}$ 
of $H$ in~$\widetilde{H}$
is continuous. 
Thus, 
we obtain 
$\| \widetilde{\cJ}\cI_{H\to\widetilde{H}}\cJ^{-1} h^0 \|_{\CH^0} 
= 
\| \cI_{H\to\widetilde{H}}\cJ^{-1} h^0 \|_{\widetilde{H}} 
\leq 
c_0 
\| \cJ^{-1} h^0 \|_{H} 
= 
c_0 
\| h^0 \|_{\cH^0}$ 
for every $h^0\in\cH^0$. 
Next, let $h^0\in\cH^0$ and set 
$v^h:=\cJ^{-1}h^0\in H$.  
Then, we observe the identities  
$\| \widetilde{\cJ}\cI_{H\to\widetilde{H}}\cJ^{-1} h^0\|_{\CH^0}^2 
= 
\| \widetilde{\cJ} v^h \|_{\CH^0}^2 
= 
\PV[ \scalar{\GP, v^h}{L_2} ] 
= 
\PV[ \scalar{\GP^0, v^h}{L_2} ]
= 
\PV[h^0]$, 
which combined with the above show that   
$\PV[h] = \PV[h^0] \leq c_0^2 \norm{h^0}{\cH^0}^2 = c_0^2 \, \pV[h]$ 
for all $h\in\cH$, where 
we set $h^0 := h-\pE[h]\in\cH^0$.  
Similarly, we derive   
${\| \cJ \cI_{\widetilde{H}\to H} \widetilde{\cJ}^{-1} \widetilde{h}^0\|_{\cH^0} 
	\leq c_1 \| \widetilde{h}^0\|_{\CH^0}}$ 
for all $\widetilde{h}^0\in\CH^0$, 
and we may change the roles of $H$ and $\widetilde{H}$ 
(respectively, of $\cH^0$ and $\CH^0$)
to conclude that also the relation   
$\pV[h]\leq c_1^2 \, \PV[h]$ holds 
for all $h\in\CH=\bbR\oplus\CH^0$. 

\ref{prop:AssI-iii}$\,\Rightarrow\,$\ref{prop:AssI-i}: 
We prove that the dual spaces, 
$H=\cC^{-1/2}(L_2)$, 
$\widetilde{H}=\CC^{-1/2}(L_2)$, 
are norm equivalent, which implies the result for 
$\dual{H}$ and~$\dual{\widetilde{H}}$. 
Norm equivalence of $\cH^0$ and $\CH^0$ 
implies continuity of  
the inclusion maps 
$\cI_{\cH^0\to\CH^0}, \cI_{\CH^0\to\cH^0}$ 
which, similarly as above, 
yields continuity of  
$\widetilde{\cJ}^{-1} \cI_{\cH^0\to\CH^0} \cJ \from H\to\widetilde{H}$ 
and of 
$\cJ^{-1} \cI_{\CH^0\to\cH^0} \widetilde{\cJ} \from \widetilde{H}\to H$. 
Thus, it follows that $\norm{v}{\widetilde{H}}\leq c_0\norm{v}{H}$ 
and 
$\norm{\widetilde{v}}{H}\leq c_1\norm{\widetilde{v}}{\widetilde{H}}$  
hold for all $v\in H$, $\widetilde{v}\in\widetilde{H}$ 
with some constants $c_0,c_1\in(0,\infty)$, since 
$\| \widetilde{\cJ}^{-1} \cI_{\cH^0\to\CH^0} \cJ v \|_{\widetilde{H}} 
= \norm{v}{\widetilde{H}}$ 
and  
$\| \cJ^{-1} \cI_{\CH^0\to\cH^0} \widetilde{\cJ} \widetilde{v} \|_{H} 
= 
\norm{ \widetilde{v} }{H}$. 

\ref{prop:AssI-iii}$\,\Rightarrow\,$\ref{prop:AssI-iv}: 
Suppose that  
$k_0 \pV[h] \leq \PV[h] \leq k_1 \pV[h]$
holds    
for every $h\in\cH=\CH$   
and 
let $\{\cH_n\}_{n\in\bbN} \in \cS^\mu_{\mathrm{adm}}$.  
Then, for every  
$n\in\bbN$ and all $h\in\cH_{-n}$, 	
$k_0 
\leq 
\frac{\PV[\hn-h]}{\pV[\hn-h]}
\leq k_1$ 
as well as  
$k_1^{-1}
\leq 
\frac{\pV[\Hn-h]}{\PV[\Hn-h]}
\leq k_0^{-1}$ 
readily follow.  
Subsequently,   
we find that 
\begin{equation}\label{eq:trick-var}
	1\leq 
	\frac{\pV\bigl[ \Hn - h\bigr]}{\pV\bigl[ \hn - h\bigr]} 
	= 
	\frac{\pV\bigl[ \Hn - h\bigr]}{\PV\bigl[ \Hn - h\bigr]}
	\frac{\PV\bigl[ \Hn - h\bigr]}{\PV\bigl[ \hn - h\bigr]}
	\frac{\PV\bigl[ \hn - h\bigr]}{\pV\bigl[ \hn - h\bigr]}
	\leq 
	k_0^{-1} k_1, 
\end{equation} 
and a similar trick shows that also 
$\frac{\PV[ \hn - h ]}{\PV[ \Hn - h ]} 
\in  
\bigl[ 1, k_1 k_0^{-1} \bigr]$ 
for all $n$ and $h$. 

\ref{prop:AssI-iv}$\,\Rightarrow\,$\ref{prop:AssI-iii}: 
We first show necessity of~\ref{prop:AssI-iii}
for uniform boundedness (from above and below) 
of the first two fractions in~\eqref{eq:prop:AssI-iv}. 
For this, let $h\in\cZ^0 \setminus\{0\}$.  
By positive definiteness of~$\varrho$, 
there exists $\phi\in\cZ^0$ so that $\{h,\phi\}$ are linearly independent. 
Define 
$\psi'_1 := \phi - \frac{\scalar{\phi,h}{\cH^0}}{\scalar{h,h}{\cH^0}} h$, 
$\psi_1 := \frac{1}{\norm{\psi'_1}{\cH^0}} \psi'_1$
and 
$\widetilde{\psi}'_1 := \phi - \frac{\scalar{\phi,h}{\CH^0}}{\scalar{h,h}{\CH^0}} h$, 
$\widetilde{\psi}_1 := \frac{1}{\norm{\widetilde{\psi}'_1}{\cH^0}} \widetilde{\psi}'_1$, 
and note that $h\in\cZ^0$ is orthogonal 
to $\psi_1\in\cZ^0$ in $\cH^0$ and 
to $\widetilde{\psi}_1\in\cZ^0$ in $\CH^0$. 
By separability of $\cH^0$ there exist 
sequences $\{\psi_j\}_{j\geq 2}$ and $\{\widetilde{\psi}_j\}_{j\geq 2}$ 
such that $\{\psi_j\}_{j\in\bbN}$ and $\{\widetilde{\psi}_j\}_{j\in\bbN}$ 
are orthonormal bases for $\cH^0$. 
For $n\in\bbN$, define the spaces 
$\cH_n^\star := \bbR\oplus\operatorname{span}\{\psi_1,\ldots,\psi_n\}$, 
$\cH_n^\divideontimes := 
\bbR\oplus\operatorname{span}\{\widetilde{\psi}_1,\ldots,\widetilde{\psi}_n\}$. 
Then, 
${\{\cH_n^\star\}_{n\in\bbN}, \{\cH_n^\divideontimes\}_{n\in\bbN} \in\cS^\mu_{\mathrm{adm}}}$ 
and, if  
$h_1$ denotes the best linear predictor 
of $h$ based on $\cH_1^\star$ and $\mu$, 
and $\widetilde{h}_1$ denotes the best linear predictor of $h$ 
based on $\cH_1^\divideontimes$ and $\MU$, 
then $h_1=\widetilde{h}_1=0$ follows. 
By boundedness of 
the first or second fraction in \eqref{eq:prop:AssI-iv} 
(with $n=1$),  
we have that 
$\frac{\PV[ h ]}{\pV[ h ] } 
=
\frac{\PV[ h_1 - h ]}{\pV[ h_1 - h ] } 
\in 
[k,K]$ or  
$\frac{\pV[ h ]}{\PV[ h ]} 
=
\frac{\pV\left[ \widetilde{h}_1 - h \right]}{\PV\left[ \widetilde{h}_1 - h \right]} 
\in 
[k,K]$. 
Thus, in both cases 
$k_0\pV[h]\leq\PV[h]\leq k_1\pV[h]$ holds, 
where $k_0 := \min\{k,K^{-1}\}$ and 
$k_1 := \max\{K,k^{-1}\}$. 
Since $h\in\cZ^0$ 
was arbitrary and since 
the constants $k,K\in(0,\infty)$ do not depend on $h$ 
(as the fractions in~\eqref{eq:prop:AssI-iv} are bounded \emph{uniformly}  
in $\{\cH_n\}_{n\in\bbN}$, $n$ and $h$),  
assertion \ref{prop:AssI-iii} follows by density 
of $\cZ^0$ in $\cH^0$ and in $\CH^0$. 

Assume next that the third fraction 
in \eqref{eq:prop:AssI-iv} is bounded, uniformly with respect to 
$\{\cH_n\}_{n\in\bbN}\in\cS^\mu_{\mathrm{adm}}$, 
$n\in\bbN$ and $h\in\cH_{-n}$. 
In either of the two cases, 
$\alpha_0=0$ or $\alpha_1=\infty$, 
where 
$\alpha_0:=\inf_{h\in\cH^0\cap\CH^0} \frac{\PV[h]}{\pV[h]}$,  
$\alpha_1:=\sup_{h\in\cH^0\cap\CH^0} \frac{\PV[h]}{\pV[h]}$, 
it follows as in \cite[][Proof of Theorem~5]{CLEVELAND71}
that there exist   
sequences $\bigl\{ h^{(\ell)} \bigr\}_{\ell\in\bbN}, 
\bigl\{\psi_1^{(\ell)}\bigr\}_{\ell\in\bbN} \subset\cH^0\cap\CH^0$, 
normalized in $\cH^0$, with 
$\tfrac{\pV\bigl[ \widetilde{h}^{(\ell)}_1 - h^{(\ell)} \bigr]}{\pV\bigl[ h^{(\ell)}_1 - h^{(\ell)} \bigr]} 
\geq \ell$ for all $\ell\in\bbN$, 
where $h^{(\ell)}_1, \widetilde{h}^{(\ell)}_1$ are the best linear predictors 
of $h^{(\ell)}$ based on $\mu$ resp.\ $\MU$ and 
$\cH_1^{(\ell)}:=\bbR\oplus\operatorname{span}\bigl\{ \psi_1^{(\ell)} \bigr\}$. 
By separability of~$\cH^0$, for each $\ell\in\bbN$, 
we may complement $\psi_1^{(\ell)}$ 
to an orthonormal basis 
$\bigl\{ \psi_j^{(\ell)} \bigr\}_{j\in\bbN}$ for~$\cH^0$. 
Thus,~for all $\ell\in\bbN$, 
$\bigl\{ \cH_n^{(\ell)} \bigr\}_{n\in\bbN} \in \cS^\mu_{\mathrm{adm}}$  
holds, 
where 
$\cH_n^{(\ell)} := \bbR\oplus\operatorname{span} \bigl\{\psi_1^{(\ell)},\ldots,\psi_n^{(\ell)} \bigr\}$ 
and 
\[
\sup_{\ell\in\bbN} \; \sup_{n\in\bbN} \, \sup_{h\in\cH_{-n}^{(\ell)}} \; 
\frac{\pV\bigl[ \widetilde{h}^{(\ell)}_n - h \bigr]}{\pV\bigl[ h^{(\ell)}_n - h \bigr]} 
\geq 
\sup_{\ell\in\bbN}  \;  
\frac{\pV\bigl[\widetilde{h}^{(\ell)}_1 - h^{(\ell)} \bigr]}{\pV\bigl[ h^{(\ell)}_1 - h^{(\ell)} \bigr]} 
= 
\infty, 
\]
contradicting uniform boundedness of the third 
fraction in \eqref{eq:prop:AssI-iv}. 
We therefore conclude that 
$\alpha_0,\alpha_1\in(0,\infty)$, 
$\cH^0 \cap \CH^0 = \cH^0 = \CH^0$ 
and \ref{prop:AssI-iii} follows. 

Finally, assuming uniform boundedness 
of the last fraction in \eqref{eq:prop:AssI-iv}, 
analogous arguments show that 
$\widetilde{\alpha}_0:=\inf_{h\in\cH^0\cap\CH^0} \frac{\pV[h]}{\PV[h]}$,  
$\widetilde{\alpha}_1:=\sup_{h\in\cH^0\cap\CH^0} \frac{\pV[h]}{\PV[h]}$ 
satisfy 
$\widetilde{\alpha}_0,\widetilde{\alpha}_1\in(0,\infty)$, 
again yielding \ref{prop:AssI-iii}. 
\end{proof} 

\begin{remark}\label{rem:COV} 
The arguments in the proof of Proposition~\ref{prop:AssI}
imply, in particular, that 
under Assumption~\ref{ass:kriging}.I  
we have, for all $v, v' \in H=\widetilde{H}$, that 
\begin{equation}\label{eq:COV} 
	\scalar{v, v'}{\widetilde{H}} 
	= 
	\scalar{\CC v, v'}{L_2} 
	= 
	\COV\bigl[ \scalar{\GP^0, v}{L_2}, \scalar{\GP^0, v'}{L_2} \bigr]
	=
	\COV\bigl[ \cJ v, \cJ v' \bigr]. 
\end{equation}
\end{remark} 	

\begin{lemma}\label{lem:S-mu}
Suppose Assumption~\ref{ass:kriging}.I 
is satisfied and 
let $\{\cH_n \}_{n\in\bbN}$ be a sequence 
of subspaces of $\cH$ such that, 
for all $n\in\bbN$,  
$\cH_n$ is of the form \eqref{eq:def:cHn}. 
Then, for every $h\in \cH$, 
the kriging predictors $\{\hn \}_{n\in\bbN}$ 
based on $\{\cH_n \}_{n\in\bbN}$  
and the measure $\mu$ are $\mu$-consistent if and only if 
the kriging predictors $\{\Hn \}_{n\in\bbN}$ 
based on $\{\cH_n \}_{n\in\bbN}$  
and $\MU$ are $\MU$-consistent. 
In particular, 
$\cS^{\mu}_{\mathrm{adm}}=\cS^{\MU}_{\mathrm{adm}}$. 
\end{lemma}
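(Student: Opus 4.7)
The main tool is Proposition~\ref{prop:AssI}\ref{prop:AssI-iii}, which (under Assumption~\ref{ass:kriging}.I) provides constants $k_0,k_1\in(0,\infty)$ with $k_0\,\pV[g]\leq\PV[g]\leq k_1\,\pV[g]$ for every $g\in\cH=\CH$; in particular $\cH^0$ and $\CH^0$ coincide as vector spaces, so that the subspaces $\cH_n^0$ defined via \eqref{eq:def:cHn} make sense for both measures simultaneously. I would then deduce the claimed equivalence of $\mu$- and $\MU$-consistency by combining this norm equivalence with the best-approximation characterisation of the two projections onto the common subspace $\cH_n$.

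The first step is to reduce mean-square errors to variances. Because $\cH_n=\bbR\oplus\cH_n^0$ contains every constant, the orthogonality condition \eqref{eq:def:hn} applied with $g_n\equiv 1$ forces $\pE[\hn-h]=0$ and, analogously, $\PE[\Hn-h]=0$, so that
\[
\pE\bigl[(\hn-h)^2\bigr]=\pV[\hn-h],\qquad \PE\bigl[(\Hn-h)^2\bigr]=\PV[\Hn-h].
\]
Hence \eqref{eq:ass:Hn-dense} (for $\mu$, respectively $\MU$) is equivalent to the vanishing of the corresponding variance of the prediction error.

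For the main step, I would invoke the best-approximation property in~\eqref{eq:def:hn} together with the decomposition~\eqref{eq:h-decomp} to write, for $h^0:=h-\pE[h]\in\cH^0=\CH^0$,
\[
\pV[\hn-h] = \inf_{g^0\in\cH_n^0}\pV[g^0 - h^0],\qquad \PV[\Hn-h] = \inf_{g^0\in\cH_n^0}\PV[g^0 - h^0].
\]
Pushing the pointwise two-sided bound $k_0\,\pV\leq\PV\leq k_1\,\pV$ inside these infima, taken over the \emph{same} subspace $\cH_n^0$, yields the sandwich
\[
k_0\,\pV[\hn-h]\;\leq\;\PV[\Hn-h]\;\leq\;k_1\,\pV[\hn-h],
\]
from which $\pV[\hn-h]\to 0$ if and only if $\PV[\Hn-h]\to 0$. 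This is the asserted equivalence of consistency. The identity $\cS^\mu_{\mathrm{adm}}=\cS^{\MU}_{\mathrm{adm}}$ is then immediate: the remaining admissibility requirement $\dim(\cH_n^0)=n$ in~\eqref{eq:def:S-adm} is plainly invariant under passing to an equivalent inner product on the common vector space $\cH^0=\CH^0$.

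\textbf{Main obstacle.} I do not anticipate a substantial difficulty: the argument is essentially a one-line transfer through Proposition~\ref{prop:AssI}. The only subtle ingredient is the reduction from the squared $L_2(\bbP)$-error to the variance of the prediction error, which crucially relies on $\bbR\subset\cH_n$ so that the projection is automatically unbiased; without this observation, the constant components of $\hn-h$ would have to be compared separately and the clean sandwich above would no longer be immediate.
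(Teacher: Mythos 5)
Your proof is correct and follows essentially the same route as the paper: both rest on Proposition~\ref{prop:AssI}\ref{prop:AssI-i}$\,\Leftrightarrow\,$\ref{prop:AssI-iii} for the two-sided variance bound, reduce mean-square errors to variances via unbiasedness, and compare the two predictors through the minimization over the common subspace $\cH_n$ (the paper writes this as the chain $\PE[(\Hn-h)^2]=\PV[\Hn-h]\leq\PV[\hn-h]\leq k_1\pV[\hn-h]$, which is exactly your "push the bound through the infimum" step).
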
 

\begin{proof}
Let $\{\cH_n\}_{n\in\bbN}\in\cS^\mu_{\mathrm{adm}}$.
By Proposition~\ref{prop:AssI}\ref{prop:AssI-i}$\,\Leftrightarrow\,$\ref{prop:AssI-iii},  
$\cH^0$ and $\CH^0$ are norm equivalent. 
Thus, 
$
\PE\bigl[ (\Hn - h)^2 \bigr] 
= 
\PV\bigl[ \Hn - h \bigr] 
\leq 
\PV\bigl[ \hn - h \bigr] 
\leq 
k_1  
\pV\bigl[ \hn - h \bigr] 
= 
k_1 
\pE\bigl[ (\hn - h)^2 \bigr] 
$
holds, for any $h\in\cH=\CH$,   
where $k_1\in(0,\infty)$ is independent 
of $n$ and $h$. 
This shows that   
$\cS^\mu_{\mathrm{adm}} \subseteq \cS^{\MU}_{\mathrm{adm}}$. 
Analogously, 
$\pE\bigl[ (\hn - h)^2 \bigr] 
\leq k_0^{-1}  
\PE\bigl[ (\Hn - h)^2 \bigr]$ 
follows for 
$\{\cH_n\}_{n\in\bbN}\in\cS^{\MU}_{\mathrm{adm}}$, 	
with $k_0\in(0,\infty)$ 
independent of $n$ and $h$,  
showing the reverse inclusion 
$\cS^{\MU}_{\mathrm{adm}} \subseteq \cS^{\mu}_{\mathrm{adm}}$. 
\end{proof} 

\begin{proposition}\label{prop:sufficiency} 
Let $\mu=\normal(m,\cC)$
and 
$\MU=\normal(\widetilde{m},\CC)$. 
Suppose that 
Assumptions~\ref{ass:kriging}.I 
and~\ref{ass:kriging}.III 
are satisfied  
and let $a\in(0,\infty)$ 
be the constant in \eqref{eq:ass:Ta}  
of Assumption~\ref{ass:kriging}.III.  
In addition, let $\hn,\Hn$ denote the best linear predictors of~$h$ 
based on~$\cH_n$ and the measures~$\mu$ and $\MU$, 
respectively. 
Then, \eqref{eq:kriging_gen_var1}--\eqref{eq:kriging_gen_var4}
hold for all 
$\{\cH_n\}_{n\in\bbN}\in\cS_{\mathrm{adm}}^\mu$. 
If, in addition,  
Assumption~\ref{ass:kriging}.II 
is fulfilled, then  
\eqref{eq:kriging_gen1}--\eqref{eq:kriging_gen4}
hold for all 
$\{\cH_n\}_{n\in\bbN}\in\cS_{\mathrm{adm}}^\mu$. 
\end{proposition}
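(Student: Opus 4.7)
The plan is to translate the whole problem to operator theory on $L_2 := L_2(\cX, \nu_\cX)$ via the isometries of Lemma~\ref{lem:cH-ONB}, so that Assumption~\ref{ass:kriging}.III enters through compactness. First, Assumption~\ref{ass:kriging}.I combined with Proposition~\ref{prop:AssI} and Lemma~\ref{lem:S-mu} gives $\cS^\mu_{\mathrm{adm}} = \cS^\MU_{\mathrm{adm}}$ and well-definedness of $\{\Hn\}_{n\in\bbN}$. Composing $\cJ\from H\to\cH^0$ from Lemma~\ref{lem:cH-ONB}\ref{lem:cH-ONB-iii} with the isometry $\cC^{1/2}\from H\to L_2$ yields an isometry $\cC^{1/2}\cJ^{-1}\from\cH^0\to L_2$. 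Under this identification, for $g = \cJ v \in\cH^0$ with image $u = \cC^{1/2}v \in L_2$, I have $\pV[g] = \norm{u}{L_2}^2$; Remark~\ref{rem:COV} combined with the decomposition $\cC^{-1/2}\CC\cC^{-1/2} = a\cI + T_a$ from Assumption~\ref{ass:kriging}.III then yields $\PV[g] = a\norm{u}{L_2}^2 + \scalar{T_a u, u}{L_2}$. Writing $V_n := \cC^{1/2}\cJ^{-1}(\cH_n^0) \subset L_2$, letting $P_{V_n}$ denote $L_2$-orthogonal projection onto $V_n$, and setting $u_h := \cC^{1/2}\cJ^{-1}(h - \pE[h])$ for $h\in\cH$, the $\mu$-kriging error $\hn - h$ corresponds to $-P_{V_n^\perp}u_h$, so $\mu$-admissibility of $\{\cH_n\}_{n\in\bbN}$ is precisely $P_{V_n^\perp}\to 0$ strongly on $L_2$.

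With this setup, \eqref{eq:kriging_gen_var3} reduces to the estimate
\[
\sup_{h\in\cH_{-n}}
\left|
\frac{\PV[\hn - h]}{\pV[\hn - h]} - a
\right|
\leq
\norm{P_{V_n^\perp} T_a P_{V_n^\perp}}{\cL(L_2)},
\]
whose right-hand side tends to zero because $T_a$ is compact and $P_{V_n^\perp}\to 0$ strongly (compact operators convert strong operator convergence of one factor into norm convergence of the composition). For~\eqref{eq:kriging_gen_var4} I would repeat the scheme with the roles of $\cC$ and $\CC$ interchanged; this requires that $\widetilde{T}_{1/a} := \CC^{-1/2}\cC\CC^{-1/2} - \tfrac{1}{a}\cI$ is compact, which I would derive by writing $A := \CC^{1/2}\cC^{-1/2}$ (an isomorphism on $L_2$ by Proposition~\ref{prop:AssI}\ref{prop:AssI-ii}), so that $A^*A = a\cI + T_a$ and $(A^*A)^{-1} = \tfrac{1}{a}\cI - \tfrac{1}{a}(a\cI + T_a)^{-1}T_a$; then $\widetilde{T}_{1/a} = (AA^*)^{-1} - \tfrac{1}{a}\cI = A\bigl[(A^*A)^{-1} - \tfrac{1}{a}\cI\bigr]A^{-1}$ is compact as a conjugate of a compact operator. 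Statements~\eqref{eq:kriging_gen_var1} and~\eqref{eq:kriging_gen_var2} then follow from~\eqref{eq:kriging_gen_var3}, \eqref{eq:kriging_gen_var4}, and the respective optimality inequalities $\pV[\hn - h]\leq\pV[\Hn - h]$ and $\PV[\Hn - h]\leq\PV[\hn - h]$, combined exactly as in~\eqref{eq:trick-var}.

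For the MSE statements under the additional Assumption~\ref{ass:kriging}.II, the Cameron--Martin element $w := \cC^{-1/2}(m - \widetilde{m})\in L_2$ is well-defined. Writing $\Hn = \PE[h] + \widetilde{g}_n$ with $\widetilde{g}_n\in\CH^0$ the best $\MU$-predictor of $h - \PE[h]$ among $\MU$-centered linear combinations of the observations, a direct computation via Lemma~\ref{lem:cH-ONB} and~\eqref{eq:cJ-iso:scalar} produces
\[
\pE[\Hn - h]
=
\scalar{m - \widetilde{m}, v_{\widetilde{g}_n^0} - v_h}{L_2}
=
\scalar{w, u_{\widetilde{g}_n^0} - u_h}{L_2},
\]
where $\widetilde{g}_n^0 := \widetilde{g}_n - \pE[\widetilde{g}_n]\in\cH_n^0$ and $u_{\widetilde{g}_n^0}\in V_n$ is its image under $\cC^{1/2}\cJ^{-1}$. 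Splitting $u_{\widetilde{g}_n^0} - u_h = (u_{\widetilde{g}_n^0} - P_{V_n}u_h) - P_{V_n^\perp}u_h$, I bound the second term by $|\scalar{w, P_{V_n^\perp}u_h}{L_2}| \leq \norm{P_{V_n^\perp}w}{L_2}\sqrt{\pV[\hn - h]}$, which is $o(\sqrt{\pV[\hn - h]})$ uniformly in $h$ since $P_{V_n^\perp}w\to 0$ in $L_2$ for the fixed $w$. For the first term, writing $g_n$ for the $\cH^0$-projection of $h - \pE[h]$ onto $\cH_n^0$ (whose image is $P_{V_n}u_h$), the Pythagorean identity in $\cH$ using $\hn - h\perp_\cH\cH_n$ and $\Hn\in\cH_n$ gives $\pV[\Hn - \hn] = \pV[\Hn - h] - \pV[\hn - h]$, which by the already-established~\eqref{eq:kriging_gen_var1} is $o(\pV[\hn - h])$ uniformly; since $\widetilde{g}_n^0 - g_n$ agrees with $\Hn - \hn$ up to a constant, $\norm{u_{\widetilde{g}_n^0} - P_{V_n}u_h}{L_2} = \sqrt{\pV[\widetilde{g}_n^0 - g_n]} = o(\sqrt{\pV[\hn - h]})$ as well. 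Hence $(\pE[\Hn - h])^2 = o(\pV[\hn - h])$ uniformly in $h\in\cH_{-n}$, and~\eqref{eq:kriging_gen1} follows from $\pE[(\Hn - h)^2] = \pV[\Hn - h] + (\pE[\Hn - h])^2$ together with~\eqref{eq:kriging_gen_var1}. The remaining~\eqref{eq:kriging_gen2}--\eqref{eq:kriging_gen4} are obtained analogously; in particular the symmetric identity $\PE[\hn - h] = \scalar{w, P_{V_n^\perp}u_h}{L_2}$ makes the $\MU$-bias of $\hn$ equally controllable.

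The hard part is the compactness step that converts the strong convergence $P_{V_n^\perp}\to 0$ into the norm convergence $\norm{P_{V_n^\perp}T_a P_{V_n^\perp}}{\cL(L_2)}\to 0$, since only norm control delivers the supremum over $h\in\cH_{-n}$; this is exactly where Assumption~\ref{ass:kriging}.III is essential. For the MSE part, the delicate point is that a direct Cauchy--Schwarz bound $|\pE[\Hn - h]|\leq\norm{m - \widetilde{m}}{H^*}\sqrt{\pV[\Hn - h]}$ only yields an $O$-bound, whereas the ratios in~\eqref{eq:kriging_gen1}--\eqref{eq:kriging_gen4} require an $o$-bound; obtaining this requires splitting the pairing as above and combining strong convergence $P_{V_n^\perp}w\to 0$ (for the fixed Cameron--Martin element $w$) with the Pythagorean consequence of~\eqref{eq:kriging_gen_var1}.
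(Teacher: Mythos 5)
Your proof is correct, and for the variance statements it follows the paper's argument almost exactly: the same isometric identification of $\cH^0$ with $L_2(\cX,\nu_\cX)$ via $\cC^{1/2}\cJ^{-1}$, the same reduction of $\sup_{h}\bigl|\PV[\hn-h]/\pV[\hn-h]-a\bigr|$ to the operator norm of $T_a$ sandwiched between the tail projections (your $P_{V_n^\perp}$ is the paper's $Q_n^\perp$), and the same use of admissibility as strong convergence $P_{V_n^\perp}\to 0$. The only cosmetic differences there are that you invoke the abstract principle ``compact times uniformly bounded strongly-null is norm-null,'' which the paper instead proves by hand via the spectral decomposition of $T_a$ (its $P_{J_\eps}/P_{J_\eps}^\perp$ splitting), and that you derive compactness of $\CC^{-1/2}\cC\CC^{-1/2}-a^{-1}\cI$ explicitly through the identity $\widetilde{T}_{1/a}=A\bigl[(A^*A)^{-1}-\tfrac1a\cI\bigr]A^{-1}$ with $A=\CC^{1/2}\cC^{-1/2}$, where the paper cites Lemma~B.1 of its supplement. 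The mean-square part is where you genuinely diverge: the paper controls the $\MU$-bias of the \emph{$\mu$-predictor}, i.e.\ $|\PE[\hn-h]|^2=|\scalar{\cC^{-1/2}(m-\widetilde m),Q_n^\perp w_n^h}{L_2}|^2$, which is a one-line Cauchy--Schwarz because $\hn-h\in\cH^0$; this yields \eqref{eq:kriging_gen3} and \eqref{eq:kriging_gen4} first, and then \eqref{eq:kriging_gen1}, \eqref{eq:kriging_gen2} follow by the multiplicative trick of \eqref{eq:trick-var}. You instead attack \eqref{eq:kriging_gen1} head-on by bounding the $\mu$-bias of the \emph{$\MU$-predictor} $\pE[\Hn-h]$, which forces the extra decomposition $u_{\widetilde g_n^0}-u_h=(u_{\widetilde g_n^0}-P_{V_n}u_h)-P_{V_n^\perp}u_h$ and the Pythagorean identity $\pV[\Hn-\hn]=\pV[\Hn-h]-\pV[\hn-h]$ combined with the already-proven \eqref{eq:kriging_gen_var1}. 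Both routes are valid and of comparable length; the paper's ordering buys a simpler bias computation (the awkward constant in $\Hn$ never appears), while yours makes the asymptotic unbiasedness of the misspecified predictor under the true measure explicit, which is a mildly informative by-product. One small presentational caveat: like the paper, you should make explicit at the outset the reduction (the paper's Proposition~C.2) to $\mu$ centered and $\MU$ with mean $\widetilde m-m$, since your identity $\pE[\Hn-h]=\scalar{m-\widetilde m,\,v_{\widetilde g_n^0}-v_h}{L_2}$ and the passage from \eqref{eq:kriging_gen1}--\eqref{eq:kriging_gen2} for the centered pair back to \eqref{eq:kriging_gen_var1}--\eqref{eq:kriging_gen_var2} for the original pair both rely on it.
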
 

\begin{proof} 
Let $\{\cH_n\}_{n\in\bbN}\in\cS_{\mathrm{adm}}^\mu$. 
As shown in Proposition~C.2 
(see Appendix~C in the Supplementary Material~\cite{kbsup}), 
we can without loss of generality 
assume that $\mu$ has zero mean and that 
$\MU$ has mean $\widetilde{m} - m$. 
We first show that 
Assumptions~\ref{ass:kriging}.I 
and~\ref{ass:kriging}.III 
imply that \eqref{eq:kriging_gen_var3} and \eqref{eq:kriging_gen_var4} 
hold. 
To this end, let $n\in\bbN$ and  
recall that $\hn$ is the kriging predictor of~$h$ 
based on 
$\cH_n 
=
\bbR\oplus
\cH_n^0$ 
and $\mu$.  
We let 
$\bigl\{\psi_1^{(n)},\ldots,\psi_n^{(n)} \bigr\}$ 
be an $\cH^0$-orthonormal basis for $\cH^0_n$, i.e.,  
$\pE\bigl[ \psi_k^{(n)} \psi_\ell^{(n)} \bigr] = \delta_{k\ell}$. 
Since $\cH^0$ is a separable Hilbert space 
there exists a countable orthonormal 
basis of the orthogonal complement  
of $\cH_n^0$ in $\cH^0$, which will be denoted 
by $\bigl\{ \psi_k^{(n)} \bigr\}_{k>n}$.  
Then, by construction  
$\bigl\{ \psi_k^{(n)} \bigr\}_{k\in\bbN}$ 
is an orthonormal 
basis for $\cH^0$.
We identify $\psi_k^{(n)}\in\cH^0$ with 
${v_k^{(n)} := \cJ^{-1} \psi_k^{(n)} \in H}$, 
where $\cJ \from H \to \cH^0$ is the 
isometric isomorphism in \eqref{eq:cJ-iso:def} 
from Lemma~\ref{lem:cH-ONB}\ref{lem:cH-ONB-iii}. 
Due to \eqref{eq:cJ-iso:scalar}, 
$\bigl\{ v_k^{(n)} \bigr\}_{k\in\bbN}$ 
is then an orthonormal basis for $H=\cC^{-1/2}(L_2)$. 
Furthermore, we note that, for every $h\in \cH_{-n}$, the vector 
$\hn - h \in \cH^0$ can be written as a linear combination 
of $\bigl\{ \psi_k^{(n)} \bigr\}_{k>n}$, i.e.,  
$\hn - h = \sum_{k=n+1}^\infty c_k^{(n)} \psi_k^{(n)}$ 
with $\sum_{k=n+1}^\infty | c_k^{(n)} |^2 < \infty$.  

We recall the identities 
in \eqref{eq:cJ-iso:scalar} and \eqref{eq:COV}
from Lemma~\ref{lem:cH-ONB}\ref{lem:cH-ONB-iii} 
and Remark~\ref{rem:COV} 
and rewrite the term 
$\text{(A)}:= 
\bigl| 
\PV[ \hn - h ] 
- 
a \pV[ \hn - h ]  
\bigr|$ 
as follows, 
\begin{align*} 
	\text{(A)} 
	&\textstyle 
	= 
	\biggl| 
	\sum\limits_{k,\ell=n+1}^\infty
	c_k^{(n)} c_\ell^{(n)} \left( 
	\COV\bigl[ \psi_k^{(n)}, \psi_\ell^{(n)} \bigr] 
	- 
	a \Cov\bigl[ \psi_k^{(n)}, \psi_\ell^{(n)} \bigr]  \right) 
	\biggr|	 
	\\
	&\textstyle 
	= 
	\biggl| 
	\sum\limits_{k,\ell=n+1}^\infty  
	c_k^{(n)} c_\ell^{(n)}  
	\left( 
	\bigl( \CC v_k^{(n)}, v_\ell^{(n)} \bigr)_{L_2} 
	- 
	a \bigl( \cC v_k^{(n)}, v_\ell^{(n)} \bigr)_{L_2} 
	\right) 
	\biggr|.   
\end{align*} 
Since $\bigl\{ v_k^{(n)} \bigr\}_{k\in\bbN}$ 
is an orthonormal basis for $H=\cC^{-1/2}(L_2)$, 
so is 
$\bigl\{ w_k^{(n)} \bigr\}_{k\in\bbN}$ 
for $L_2$, where 
$w_k^{(n)} := \cC^{1/2} v_k^{(n)}$. 
We set     
$w_n^h := \sum_{k = n+1}^\infty c_k^{(n)} w_k^{(n)}$ 
and obtain 
\[
\text{(A)} 
= 
\bigl| 
\bigl( \bigl( \cC^{-1/2} \CC \cC^{-1/2} -a \cI \bigr) 
Q_n^\perp w_n^h , 
Q_n^\perp w_n^h \bigr)_{L_2} \bigr|, 
\]  
where   
$Q_n^\perp := \cI- Q_n$ 
and 
$Q_n \from L_2 \to W_n$ 
denotes the $L_2$-orthogonal projection 
onto the subspace 
$W_n := 
\operatorname{span}\bigl\{w_1^{(n)}, \ldots, 
w_n^{(n)}\bigr\}$. 
By Assumption~\ref{ass:kriging}.III, 
$T_a = \cC^{-1/2} \CC \cC^{-1/2} -a \cI$ 
is compact on $L_2$.
For this reason, there exists an 
orthonormal basis $\{b_j\}_{j\in\bbN}$ for $L_2$ 
consisting of eigenvectors of $T_a$ with corresponding 
eigenvalues $\{\tau_j\}_{j\in\bbN}\subset\bbR$ 
accumulating only at zero. 
For $J\in\bbN$,  
we define $V_J:=\operatorname{span}\{b_1, \ldots, b_J\}$.  
We write $P_J\from L_2 \to V_J$ 
for the corresponding $L_2$-orthogonal projection   
and set $P_J^\perp := \cI-P_J$.  
Then, by invoking the chain of identities   
$\| w_n^h \|_{L_2}^2   
= \sum_{k=n+1}^\infty \bigl| c_k^{(n)} \bigr|^2 
= \pE\bigl[ (\hn - h)^2 \bigr] 
= \pV[ \hn -h ]$, 
we estimate 
\[ 
\text{(A)} 
\leq 
\pV[ \hn -h ] 
\sup_{\norm{w}{L_2}=1} 
\bigl| \bigl( T_a Q_n^\perp w, Q_n^\perp w \bigr)_{L_2} \bigr| . 
\]
Clearly, if $T_a = 0$, we obtain that $\text{(A)} = 0$. 
Thus, from now on we assume that 
$\norm{T_a}{\cL(L_2)} > 0$. 
Since $\cI = P_J + P_J^\perp$ 
and $P_J^\perp T_a P_J = P_J T_a P_J^\perp = 0$ 
we find   
\begin{align} 
	\frac{ \text{(A)} }{ \pV[ \hn -h ]  }  
	&\leq 
	\sup_{\norm{w}{L_2}=1} 
	\left| 
	\bigl( P_J^\perp T_a P_J^\perp Q_n^\perp w, Q_n^\perp w \bigr)_{L_2}   
	+ \bigl( P_J T_a P_J Q_n^\perp w, Q_n^\perp w \bigr)_{L_2} 
	\right| 
	\notag\\ 
	&\leq 
	\sup_{\norm{w}{L_2}=1} 
	\bigl| \bigl( T_a P_J^\perp w, P_J^\perp w \bigr)_{L_2} \bigr| 
	+ 
	\sup_{\norm{w}{L_2}=1}  
	\bigl\| Q_n^\perp P_J T_a P_J Q_n^\perp w \bigr\|_{L_2} . 
	\label{eq:proof:sufficiency-1}
\end{align} 
Here, we have used self-adjointness 
of $T_a, P_J, P_J^\perp$ and $Q_n^\perp$  
on $L_2$ in the last 
step.  
Now fix $\eps\in(0,\infty)$.  
Since $\lim_{j\to\infty}\tau_j = 0$, 
there exists  
$J_\eps\in\bbN$ with  
\begin{equation}\label{eq:proof:sufficiency-2} 
	\textstyle 
	\sup\limits_{\norm{w}{L_2}=1} 
	\bigl| 
	\bigl( T_a
	P_{J_\eps}^\perp w, P_{J_\eps}^\perp w \bigr)_{L_2} \bigr| 
	= \sup\limits_{j > J_\eps} |\tau_j| 
	< \frac{\eps}{2}. 
\end{equation} 
In addition,     
for 
$w := \sum_{k\in\bbN} \alpha_k^{(n)} w_k^{(n)} \in L_2$
and 
$h^w := \sum_{k\in\bbN} \alpha_k^{(n)} \psi_k^{(n)} \in \cH^0$, 
with some square-summable coefficients 
$\bigl\{ \alpha_k^{(n)} \bigr\}_{k\in\bbN}$, 
we find that 
\begin{equation}\label{eq:Q-n-dense}
	\textstyle 
	\norm{ Q_n^\perp w }{L_2}^2
	= 
	\sum\limits_{k=n+1}^\infty \bigl| \alpha_k^{(n)} \bigr|^2 
	= 
	\Bigl\| 
	\sum\limits_{k=n+1}^\infty \alpha_k^{(n)} \psi_k^{(n)} 
	\Bigr\|_{\cH^0}^2 
	=
	\| \hn^w - h^w \|_{\cH}^2.  
\end{equation} 
Because of this relation 
and thanks to the assumption that 
$\{\cH_n\}_{n\in\bbN}\in\cS^\mu_{\mathrm{adm}}$,  
there exists   
$n_\eps\in\bbN$ such that 
$\max_{1\leq j \leq J_\eps}
\norm{Q_n^\perp b_j }{L_2} < 
\frac{\eps}{2 \norm{T_a}{\cL(L_2)} \sqrt{J_\eps}}$ 
holds for every $n\geq n_\eps$, 
cf.~\eqref{eq:def:S-adm}. 
Therefore, for all $n\geq n_\eps$,  
we obtain that 
\[
\textstyle 
\norm{ Q_n^\perp P_{J_\eps} w }{L_2}
<  
\frac{\eps}{2 \norm{T_a}{\cL(L_2)} \sqrt{J_\eps}}  
\sum\limits_{j=1}^{J_\eps} 
| \scalar{w, b_j}{L_2} | 
\leq  
\frac{\eps}{2 \norm{T_a}{\cL(L_2)} } \norm{P_{J_\eps} w}{L_2} 
\quad  
\forall w \in L_2. 
\] 
The norm identities   
$\norm{P_{J_\eps}}{\cL(L_2)} 
= 
\norm{Q_n^\perp}{\cL(L_2)} = 1$  
thus imply that, 
for every $n\geq n_\eps$, and 
for all $w \in L_2$, 
\begin{equation}\label{eq:proof:sufficiency-3}
	\textstyle 
	\norm{ Q_n^\perp P_{J_\eps} T_a P_{J_\eps} Q_n^\perp w }{L_2} 
	<  
	\frac{\eps}{2 \norm{T_a}{\cL(L_2)}} 
	\norm{ P_{J_\eps} T_a P_{J_\eps} Q_n^\perp w }{L_2}
	\leq 
	\frac{\eps}{2} \norm{w}{L_2}  
\end{equation} 
holds. 
Combining 
\eqref{eq:proof:sufficiency-1}, 
\eqref{eq:proof:sufficiency-2} 
and \eqref{eq:proof:sufficiency-3}
shows that 
$\sup_{h\in\cH_{-n}}
\frac{\text{(A)}}{\pV[ \hn - h ]}
<\eps$ 
for every $n\geq n_\eps$ 
and, 
since $\eps\in(0,\infty)$ was arbitrary, 
\[ 
\lim_{n\to \infty} 
\sup_{h\in\cH_{-n}}
\frac{\text{(A)}}{\pV[ \hn - h ]}
= 
\lim_{n\to \infty} 
\sup_{h\in\cH_{-n}}
\left| 
\frac{\PV[\hn - h]}{\pV[\hn-h]} - a 
\right| 
=0, 
\]
i.e., \eqref{eq:kriging_gen_var3}  
follows. 
Furthermore, 
$\CC^{-1/2}\cC\CC^{-1/2} - a^{-1}\cI$  
is compact on $L_2$ 
by 
Lemma~B.1 
(see Appendix~B in the Supplementary Material~\cite{kbsup}) 
and  
$\cS^{\mu}_{\mathrm{adm}} = \cS^{\MU}_{\mathrm{adm}}$
by Lemma~\ref{lem:S-mu}  
so that, after changing the roles
of the measures $\mu$ and $\MU$,  
\eqref{eq:kriging_gen_var3}   
implies \eqref{eq:kriging_gen_var4}. 

Next, we show validity of 
\eqref{eq:kriging_gen3} under 
Assumptions~\ref{ass:kriging}.I--III. 
To this end, we first split  
$\bigl|
\PE\bigl[ (\hn - h)^2 \bigr] 
- 
a \pE\bigl[ (\hn - h)^2 \bigr] 
\bigr|
\leq 
\text{(A)} 
+ 
\text{(B)}$ 
in  
term (A), which is defined as above, and 
term 
$\text{(B)}
:= 
\bigl| 
\PE [ \hn - h ] 
\bigr|^2$.
By the Cauchy--Schwarz inequality, 
\[
\textstyle 
\text{(B)} 
= 
\Bigl| 
\sum\limits_{k=n+1}^\infty
c_k^{(n)} 
\PE\bigl[ \psi_k^{(n)} \bigr]  
\Bigr|^2
\leq 
\pE\bigl[ ( \hn - h )^2 \bigr]
\sum\limits_{k=n+1}^{\infty} 
\bigl| \PE\bigl[ \psi_k^{(n)} \bigr] \bigr|^2.  
\]
For each $k\geq n+1$,  
we let $\bigl\{ \psi_{kj}^{(n)} \bigr\}_{j\in\bbN}$ 
be the coefficients of $\psi_k^{(n)}$ 
when represented with respect to 
the orthonormal basis $\{\gp_j\}_{j\in\bbN}$ 
from Lemma~\ref{lem:cH-ONB}\ref{lem:cH-ONB-ii}. We then find  
(recall that we have centered $\mu$ so that $\MU$ 
has mean $\widetilde{m}-m$): 
{\allowdisplaybreaks 
	\begin{align*}
		\textstyle
		\sum\limits_{k=n+1}^{\infty} 
		&\textstyle 
		\bigl| \PE\bigl[ \psi_k^{(n)} \bigr] \bigr|^2 
		= 
		\sum\limits_{k=n+1}^{\infty} 
		\Bigl| 
		\sum\limits_{j\in\bbN} 
		\psi_{kj}^{(n)}  
		\PE[ \gp_j ] 
		\Bigr|^2
		= 
		\sum\limits_{k=n+1}^{\infty} 
		\Bigl| 
		\sum\limits_{j\in\bbN} 
		\psi_{kj}^{(n)}  
		\PE\bigl[ \scalar{\GP^0, v_j }{L_2} \bigr] 
		\Bigr|^2 
		\\
		&\textstyle 
		= 
		\sum\limits_{k=n+1}^{\infty} 
		\Bigl|  
		\sum\limits_{j\in\bbN} 
		\psi_{kj}^{(n)}  
		\bigl( \widetilde{m} - m, \cC^{-1/2} e_j \bigr)_{L_2}  
		\Bigr|^2 
		= 
		\sum\limits_{k=n+1}^{\infty} 
		\bigl( \cC^{-1/2} (\widetilde{m} - m) , w_k^{(n)} \bigr)_{L_2}^2,    
	\end{align*}
	since} 
$w_k^{(n)} = \cC^{1/2} v_k^{(n)} 
= \cC^{1/2} \cJ^{-1} \psi_k^{(n)} 
= \sum_{j\in\bbN} \psi_{kj}^{(n)} e_j$ 
and this series converges in $L_2$. 
Therefore, 
$\sum_{k=n+1}^{\infty} 
\bigl| \PE\bigl[ \psi_k^{(n)} \bigr] \bigr|^2 
= 
\bigl\| Q_n^\perp \cC^{-1/2} (m - \widetilde{m} ) \bigr\|_{L_2}^2$ 
follows.   
By Assumption~\ref{ass:kriging}.II 
the difference of the means $m - \widetilde{m}$ is an element 
of the Cameron--Martin space $H^*=\cC^{1/2}(L_2)$.  
Consequently, 
$\cC^{-1/2}( m - \widetilde{m} ) \in L_2$ 
and the norm on the right-hand side converges to zero 
as $n\to\infty$ by \eqref{eq:Q-n-dense} 
and \eqref{eq:ass:Hn-dense}.  
This shows that also 
\begin{equation}\label{eq:term-B} 
	\lim_{n\to\infty} 
	\sup_{h\in\cH_{-n}}
	\frac{\text{(B)}}{\pE\bigl[ ( \hn - h )^2 \bigr]} 
	= 
	\lim_{n\to\infty} 
	\sup_{h\in\cH_{-n}}
	\frac{ \bigl| \PE[ \hn - h ] \bigr|^2 }{ 
		\pE\bigl[ ( \hn - h )^2 \bigr]} 
	= 0.  
\end{equation}
We thus conclude with \eqref{eq:kriging_gen_var3} 
and \eqref{eq:term-B} that, 
uniformly in $h$, 
\[
\left| 
\frac{ \PE\bigl[ ( \hn - h )^2 \bigr] }{ 
	\pE\bigl[ ( \hn - h )^2 \bigr]} 
- a 
\right| 
\leq 
\left| 
\frac{ \PV[ \hn - h ] }{ 
	\pV[ \hn - h ]} 
- a 
\right| 
+ 
\frac{ \bigl| \PE[ \hn - h ] \bigr|^2 }{ 
	\pE\bigl[ ( \hn - h )^2 \bigr]} 
\to
0 
\quad 
\text{as} 
\quad 
n\to\infty, 
\]
and 
\eqref{eq:kriging_gen3} follows. 
Again, by virtue of Lemma~\ref{lem:S-mu} and 
Lemma~B.1 (see Appendix~B in the Supplementary Material~\cite{kbsup}) 
we may   
change the roles of $\mu$ and $\MU$
which gives \eqref{eq:kriging_gen4}. 

To derive \eqref{eq:kriging_gen1}, note that 
$\pE\bigl[ ( \hn - h )^2 \bigr] 
\leq 
\pE\bigl[ ( \Hn - h )^2 \bigr]$ 
as $\hn$ is the $\mu$-best linear predictor. 
For the same reason, we obtain   
$\PE\bigl[ ( \Hn - h )^2 \bigr] 
\leq 
\PE\bigl[ ( \hn - h )^2 \bigr]$,   
and the estimates 
$1 \leq 
\frac{\pE[ ( \Hn - h )^2 ]}{\pE[ ( \hn - h )^2 ]}
\leq 
\frac{\pE[ ( \Hn - h )^2 ]}{\PE[ ( \Hn - h )^2 ]}
\frac{\PE[ ( \hn - h )^2 ]}{\pE[ ( \hn - h )^2 ]}$
follow 
similarly as in \eqref{eq:trick-var}. 
By \eqref{eq:kriging_gen4} and \eqref{eq:kriging_gen3} 
the last two fractions converge 
to $a^{-1}$ and to $a$, 
uniformly~in~$h$,  
as $n\to\infty$ 
and \eqref{eq:kriging_gen1} follows. 
Changing the roles of $\mu$ and $\MU$  
implies~\eqref{eq:kriging_gen2}. 

Finally, note that if 
$\mu=\normal(m,\cC)$ and 
$\MU=\normal(\widetilde{m},\CC)$ 
are such that 
Assumptions~\ref{ass:kriging}.I 
and~\ref{ass:kriging}.III 
are satisfied, 
then the centered measures 
$\muc=\normal(0,\cC)$ and 
$\MUc=\normal(0,\CC)$ 
satisfy Assumptions~\ref{ass:kriging}.I--III 
so that 
\eqref{eq:kriging_gen1}, \eqref{eq:kriging_gen2} 
hold for the pair $\muc,\MUc$
and  
\eqref{eq:kriging_gen_var1}, \eqref{eq:kriging_gen_var2} 
follow from the identities 
in (C.2) and (C.3), see Appendix~C 
in the Supplementary Material~\cite{kbsup}. 
\end{proof} 

\begin{proof}[Proof of Theorem~\ref{thm:kriging-equiv-measures} 
\& Corollary~\ref{cor:kriging-equiv-measures-var}]
If the measures $\mu$ and~$\MU$ are equivalent, 
then by the Feldman--H\'ajek theorem  
(see Theorem~A.1 in Appendix~A of the Supplementary Material~\cite{kbsup})  
Assumptions~\ref{ass:kriging}.I--II  
hold and  
$T_1 = \cC^{-1/2}\CC\cC^{-1/2}-\cI \in \cL_2(L_2)$. 
Since every Hilbert--Schmidt operator is compact, 
this implies that also Assumption~\ref{ass:kriging}.III 
is fulfilled for $a=1$. 
Therefore, for every   
$\{\cH_n\}_{n\in\bbN}\in\cS^\mu_{\mathrm{adm}}$,  
all assertions    
in \eqref{eq:kriging_equiv-opt}, \eqref{eq:kriging_equiv-a},  
\eqref{eq:kriging_equiv_var-opt}, \eqref{eq:kriging_equiv_var-a} 
follow from 
Proposition~\ref{prop:sufficiency}. 
\end{proof} 

\begin{lemma}\label{lem:nec-Ass-I+III}
Let $\mu=\normal(m,\cC)$,  
$\MU=\normal(\widetilde{m},\CC)$.   
In \eqref{eq:kriging_gen_var1}--\eqref{eq:kriging_gen_var4}
let $\hn,\Hn$ denote the best linear predictors of $h$ 
based on~$\cH_n$ and the measures~$\mu,\MU$.  
Then, validity of 
any of the statements  
\eqref{eq:kriging_gen_var1}, 
\eqref{eq:kriging_gen_var2},
\eqref{eq:kriging_gen_var3},  
or \eqref{eq:kriging_gen_var4}
for all 
$\{\cH_n\}_{n\in\bbN}\in\cS_{\mathrm{adm}}^\mu$  
implies that the Assumptions~\ref{ass:kriging}.I 
and~\ref{ass:kriging}.III are satisfied, 
and the constant $a\in(0,\infty)$ 
in \eqref{eq:kriging_gen_var3}, 
\eqref{eq:kriging_gen_var4} is the 
same as in \eqref{eq:ass:Ta}. 
\end{lemma}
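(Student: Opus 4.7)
The plan is to first establish Assumption~\ref{ass:kriging}.I by a contradiction argument using an explicitly constructed admissible sequence, and then to deduce Assumption~\ref{ass:kriging}.III through the spectral quadratic-form identity already used in the proof of Proposition~\ref{prop:sufficiency}. I will focus on the case in which \eqref{eq:kriging_gen_var3} is assumed; \eqref{eq:kriging_gen_var4} reduces to this by exchanging the roles of $\mu$ and $\MU$ (once Assumption~\ref{ass:kriging}.I is in hand, Lemma~\ref{lem:S-mu} yields $\cS^\mu_{\mathrm{adm}}=\cS^{\MU}_{\mathrm{adm}}$, and Lemma~B.1 of the Supplementary Material~\cite{kbsup} converts compactness of $\CC^{-1/2}\cC\CC^{-1/2}-(1/a)\cI$ into that of $T_a$), while \eqref{eq:kriging_gen_var1} and \eqref{eq:kriging_gen_var2} are handled via the chain-of-fractions manipulations as in \eqref{eq:trick-var} together with the optimality bounds $\pV[\hn-h]\leq\pV[\Hn-h]$ and $\PV[\Hn-h]\leq\PV[\hn-h]$.

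First I would prove Assumption~\ref{ass:kriging}.I by contradiction. If $\cH^0$ and $\CH^0$ are not norm equivalent, then by Proposition~\ref{prop:AssI}\ref{prop:AssI-iii} together with density of $\cZ^0$ the ratio $q(h):=\PV[h]/\pV[h]$ is either unbounded above or unbounded below on $\cZ^0\setminus\{0\}$. In the case $\sup q=\infty$, an elementary Cauchy--Schwarz estimate on the orthogonal decomposition $h=\alpha\phi+h^\perp$ in $\cH^0$ with $\pV[\phi]=1$ shows that $q\leq K$ on $\{\phi\}^\perp$ forces $q\leq 2\max(q(\phi),K)$ on all of $\cH^0$; hence unboundedness of $q$ propagates to every codimension-finite subspace. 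Iterating produces an orthonormal sequence $\{\phi_k\}_{k\in\bbN}\subset\cZ^0$ in $\cH^0$ with $q(\phi_k)\geq k$. Setting $V:=\overline{\operatorname{span}\{\phi_k\}}\subset\cH^0$ and choosing an orthonormal basis $\{\xi_k\}_{k\in\bbN}$ of $V^\perp\subset\cH^0$, the interleaved sequence $\xi_1,\phi_1,\xi_2,\phi_2,\ldots$ is an orthonormal basis of $\cH^0$, and taking $\cH_n^0$ to be the span of its first $n$ elements defines an admissible sequence $\{\cH_n\}_{n\in\bbN}\in\cS^\mu_{\mathrm{adm}}$. At index $n=2k-1$ the element $\phi_k$ is orthogonal to $\cH_n^0$ in $\cH^0$, so the $\mu$-kriging predictor of $\phi_k$ vanishes and $\phi_k\in\cH_{-n}$; therefore
\[
\sup_{h\in\cH_{-n}}\left|\frac{\PV[\hn-h]}{\pV[\hn-h]}-a\right|\;\geq\;|q(\phi_k)-a|\;\geq\;k-a\;\longrightarrow\;\infty
\]
as $n\to\infty$, contradicting \eqref{eq:kriging_gen_var3}. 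The case $\inf q=0$ is symmetric.

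Next, with Assumption~\ref{ass:kriging}.I in hand, Proposition~\ref{prop:AssI}\ref{prop:AssI-ii} makes $T:=\cC^{-1/2}\CC\cC^{-1/2}$ a bounded self-adjoint operator on $L_2$, so that $T_a=T-a\cI$ is bounded and self-adjoint. Reusing the calculation in the proof of Proposition~\ref{prop:sufficiency} gives
\[
\sup_{h\in\cH_{-n}}\frac{|\PV[\hn-h]-a\,\pV[\hn-h]|}{\pV[\hn-h]}\;=\;\sup_{w\in W_n^\perp\setminus\{0\}}\frac{|\scalar{T_a w,w}{L_2}|}{\|w\|_{L_2}^2}\;=\;\|P_n^\perp T_a P_n^\perp\|_{\cL(L_2)},
\]
where $W_n=\operatorname{span}\{w_1^{(n)},\ldots,w_n^{(n)}\}$ with $w_k^{(n)}:=\cC^{1/2}\cJ^{-1}\psi_k^{(n)}$, $P_n$ is the $L_2$-orthogonal projection onto $W_n$, and the second equality uses self-adjointness of $T_a$. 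Since $\cJ\colon H\to\cH^0$ and $\cC^{-1/2}\colon L_2\to H$ are isometric isomorphisms, the map $\{w_k\}\mapsto\{\cJ\cC^{-1/2}w_k\}$ identifies every orthonormal basis of $L_2$ with an orthonormal basis of $\cH^0$ defining an admissible sequence. Hypothesis \eqref{eq:kriging_gen_var3} therefore yields $\|P_n^\perp T_a P_n^\perp\|_{\cL(L_2)}\to 0$ for some (and indeed every) sequence of finite-rank orthogonal projections $P_n$ with $P_n\to\cI$ strongly. Writing $T_a=P_n T_a+P_n^\perp T_a P_n+P_n^\perp T_a P_n^\perp$, in which the first two summands have rank at most $n$ and the third has vanishing norm, exhibits $T_a$ as a norm-limit of finite-rank operators and hence compact on $L_2$.

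The main obstacle I anticipate is the interleaved-basis construction in the second step: the blow-up sequence $\{\phi_k\}\subset\cZ^0$ must be selected iteratively so that each new $\phi_k$ stays in $\cZ^0$, is orthogonal to the previously chosen vectors in $\cH^0$, and still satisfies $q(\phi_k)\geq k$, while the interleaving with an orthonormal basis of $V^\perp$ must place a $\phi_k$ at position $n+1$ for infinitely many $n$ so that the supremum in \eqref{eq:kriging_gen_var3} picks up the blow-up contribution cofinally. Once Assumption~\ref{ass:kriging}.I supplies boundedness and self-adjointness of $T$ on $L_2$, the finite-rank approximation in the third step proceeds routinely from the quadratic-form identity.
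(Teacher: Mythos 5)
Your treatment of \eqref{eq:kriging_gen_var3} (and, by exchanging the roles of $\mu$ and $\MU$, of \eqref{eq:kriging_gen_var4}) is sound and in part takes a genuinely different route from the paper: once Assumption~\ref{ass:kriging}.I is in place, the identity $\sup_{h\in\cH_{-n}}\bigl|\PV[\hn-h]/\pV[\hn-h]-a\bigr|=\|Q_n^\perp T_a Q_n^\perp\|_{\cL(L_2)}$ does hold, and your decomposition $T_a=Q_nT_a+Q_n^\perp T_aQ_n+Q_n^\perp T_aQ_n^\perp$ exhibits $T_a$ directly as a norm limit of finite-rank operators, which is cleaner than the paper's contradiction argument via the spectral Lemma~B.2. (Two details in your Assumption~\ref{ass:kriging}.I step need care: the case $\inf q=0$ is not literally symmetric to $\sup q=\infty$, since the reverse triangle inequality does not propagate a lower bound on $q$ to finite-codimensional complements without first securing a global upper bound; and for \eqref{eq:kriging_gen_var4} the interleaving must be carried out with $\CH^0$-orthogonality while admissibility is still required in $\cS^\mu_{\mathrm{adm}}$.)

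The genuine gap is your dismissal of \eqref{eq:kriging_gen_var1} and \eqref{eq:kriging_gen_var2} as following ``via the chain-of-fractions manipulations as in \eqref{eq:trick-var} together with the optimality bounds.'' Those manipulations only run in the sufficiency direction: they bound the cross-predictor ratios $\pV[\Hn-h]/\pV[\hn-h]$ and $\PV[\hn-h]/\PV[\Hn-h]$ \emph{given} control of the same-predictor ratios, and they cannot be inverted, because the factors in the chain are evaluated at the two different error vectors $\Hn-h$ and $\hn-h$ and do not cancel. Concretely, knowing that $\pV[\Hn-h]/\pV[\hn-h]\to 1$ uniformly gives no direct handle on $\PV[g]/\pV[g]$, so neither your interleaving argument for Assumption~\ref{ass:kriging}.I nor your quadratic-form identity for Assumption~\ref{ass:kriging}.III applies. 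What is needed here is the quantitative converse to optimality under a misspecified inner product due to Cleveland \cite{CLEVELAND71}: in any two-dimensional subspace of $(\cH_n^0)^\perp$ on which the ratio $\PV[\cdot]/\pV[\cdot]$ attains two well-separated values $\widetilde{\theta}_n<\widetilde{\Theta}_n$, one can choose a target $h^{(n)}$ and a one-dimensional observation space so that the efficiency loss equals $(\widetilde{\theta}_n+\widetilde{\Theta}_n)^2/(4\widetilde{\theta}_n\widetilde{\Theta}_n)$, as in \eqref{eq:tilde-theta-equality}; failure of Assumption~\ref{ass:kriging}.I or~III supplies such subspaces in every $(\cH_n^0)^\perp$ with separation bounded away from zero, which is what contradicts \eqref{eq:kriging_gen_var2} (and, after swapping measures, \eqref{eq:kriging_gen_var1}). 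This construction is the core of the paper's necessity proof for these two statements and is entirely absent from your proposal.
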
 

\begin{proof} 
By the identities~(C.2)--(C.5) 
(see Appendix~C in the Supplementary Material~\cite{kbsup}) 
we can without loss of generality 
assume that $m=\widetilde{m}=0$. Then,  
$\pV\bigl[ \Hn - h \bigr] 
= 
\pE\bigl[ (\Hn - h)^2 \bigr]$  
and 
$\PV\bigl[ \hn - h \bigr] 
= 
\PE\bigl[ (\hn - h)^2 \bigr]$ 
follow. 
Furthermore,  
$\pV\bigl[ \hn - h \bigr] 
= 
\pE\bigl[ (\hn - h)^2 \bigr]$ 
and 
$\PV\bigl[ \Hn - h \bigr] 
= 
\PE\bigl[ (\Hn - h)^2 \bigr]$ 
always hold 
by unbiasedness of the kriging predictor. 
Recall from Lemma~\ref{lem:cH-ONB} 
the orthonormal bases 
$\{e_j\}_{j\in\bbN}$ for $L_2$, 
$\{ v_j \}_{j\in\bbN}$ 
for $H=\cC^{-1/2}(L_2)$, 
and 
$\{ \gp_j \}_{j\in\bbN}$ 
for~$\cH^0$  
as well as 
the isometry~$\cJ\from H \to \cH^0$ 
which identifies $v_j$ with $\gp_j$.  

If any of the statements  
\eqref{eq:kriging_gen_var1}, 
\eqref{eq:kriging_gen_var2}, 
\eqref{eq:kriging_gen_var3}, 
or
\eqref{eq:kriging_gen_var4} 
holds for every  
$\{\cH_n\}_{n\in\bbN}\in\cS^\mu_{\mathrm{adm}}$,  
then by Lemma~B.4  
(see Appendix~B in the Supplementary Material~\cite{kbsup}), 
all four assertions of Proposition~\ref{prop:AssI}  
and, in particular, 
Proposition~\ref{prop:AssI}\ref{prop:AssI-i} hold, i.e., 
Assumption~\ref{ass:kriging}.I  
is satisfied. 

Next, we prove that validity of 
\eqref{eq:kriging_gen_var2} 
for all 
$\{\cH_n\}_{n\in\bbN} \in\cS^\mu_{\mathrm{adm}}$ 
implies  
Assumption~\ref{ass:kriging}.III. 
For $n\in\bbN$, 
define 
$E_n := \operatorname{span}\{e_1,\ldots,e_n\}\subset L_2$ 
and 
$H_n := \operatorname{span}\{v_1,\ldots,v_n\}\subset H$,   
and let 
$E_n^\perp = \operatorname{span}\{e_j\}_{j>n}$ 
as well as 
$H_n^\perp = \operatorname{span}\{v_j\}_{j>n}$   
be their orthogonal complements 
in $L_2$ and $H$, respectively. 
Note that $E_n = H_n$ 
and $E_n^\perp \subset H_n^\perp$. 
Now suppose that, for all $a\in(0,\infty)$, 
the linear operator 
$T_a = \cC^{-1/2} \CC \cC^{-1/2} - a\cI$ 
is not compact on $L_2$, and define    
$\underline{\alpha} := \norm{\cC^{1/2}\CC^{-1/2}}{\cL(L_2)}^{-2}$, 
$\overline{\alpha} := \norm{\CC^{1/2}\cC^{-1/2}}{\cL(L_2)}^2$. 
Then, by Lemma~B.2 
(see Appendix~B in the Supplementary Material~\cite{kbsup}) 
there exist $\delta\in(0,\infty)$ 
and, for every $n\in\bbN$, 
$\underline{a}_n, \overline{a}_n \in [\underline{\alpha},\overline{\alpha}]$ and 
$\underline{w}_n,\overline{w}_n 
\in E_n^\perp\setminus\{0\}$ 
such that, for all $n\in\bbN$, we have 
$\overline{a}_n - \underline{a}_n\geq \delta$ 
and 
\[
\biggl| 
\frac{ \scalar{ \cC^{-1/2}\CC\cC^{-1/2} \underline{w}_n, 
		\underline{w}_n }{L_2} }{ 
	\scalar{ \underline{w}_n, 
		\underline{w}_n }{L_2}} 
- 
\underline{a}_n 
\biggr| 
< \frac{\delta \underline{\alpha}^2}{3 \overline{\alpha}^2},  
\qquad   
\biggl| 
\frac{ \scalar{ \cC^{-1/2}\CC\cC^{-1/2} \overline{w}_n, 
		\overline{w}_n }{L_2} }{ 
	\scalar{ \overline{w}_n, 
		\overline{w}_n }{L_2}} 
- 
\overline{a}_n    
\biggr| 
< \frac{\delta \underline{\alpha}^2}{3 \overline{\alpha}^2}.  
\]  
We set 
$\underline{c}_n := \overline{a}_n^{\,-1}$, 
$\overline{c}_n := \underline{a}_n^{-1}$, and 
$\underline{v}_n := \cC^{-1/2} \overline{w}_n$,  
$\overline{v}_n := \cC^{-1/2} \underline{w}_n$. 
Then, we obtain that, for all $n\in\bbN$,  
$\underline{c}_n, \overline{c}_n \in 
\bigl[ \overline{\alpha}^{\,-1}, \underline{\alpha}^{-1} \bigr]$, 
and 
$\overline{c}_n - \underline{c}_n 
\geq \delta' := \delta \overline{\alpha}^{\,-2}$. 
The vectors 
$\underline{v}_n, \overline{v}_n \in H_n^\perp$ 
satisfy  
$\Bigl| 
\frac{ \scalar{\cC \underline{v}_n, \underline{v}_n }{L_2} }{ 
	\scalar{   \CC \underline{v}_n, \underline{v}_n }{L_2}} 
- 
\underline{c}_n \,     
\Bigr| 
< \frac{\delta'}{3}$, 
$\Bigl| 
\frac{ \scalar{\cC \overline{v}_n, \overline{v}_n }{L_2} }{ 
	\scalar{   \CC \overline{v}_n, \overline{v}_n }{L_2}} 
- 
\overline{c}_n \, 
\Bigr| 
< \frac{\delta'}{3}$. 
We then define 
$\underline{\phi}_n := \cJ \underline{v}_n \in\cH^0$ 
and 
$\bar{\phi}_n  := \cJ \overline{v}_n \in\cH^0$
and find that 
\[ 
\frac{\pE\bigl[ \underline{\phi}_n^2 \bigr]}{ 
	\PE\bigl[ \underline{\phi}_n^2 \bigr]} 
= 
\frac{\scalar{\cC \underline{v}_n, \underline{v}_n }{L_2} }{
	\scalar{  \CC \underline{v}_n, \underline{v}_n}{L_2} } 
\in 
\bigl( 
\underline{c}_n - \tfrac{\delta'}{3}, \, 
\underline{c}_n + \tfrac{\delta'}{3} \bigr) , 
\quad 
\frac{\pE\bigl[ \bar{\phi}_n^2 \bigr]}{ 
	\PE\bigl[ \bar{\phi}_n^2 \bigr]} 
= 
\frac{\scalar{\cC \overline{v}_n, \overline{v}_n }{L_2} }{
	\scalar{  \CC \overline{v}_n, \overline{v}_n}{L_2} } 
\in 
\bigl( 
\overline{c}_n - \tfrac{\delta'}{3}, \, 
\overline{c}_n + \tfrac{\delta'}{3} \bigr) . 
\]  
As in 
\cite[][Proof of Theorem~5]{CLEVELAND71} 
it follows that there exist 
$h^{(n)}, \psi_n \in 
\operatorname{span}\{ \underline{\phi}_n, \bar{\phi}_n \}$ 
such that 
\begin{equation}\label{eq:tilde-theta-equality}  
	\frac{ \PE\bigl[ (h^{(n)}_1 - h^{(n)} )^2 \bigr] }{
		\PE\bigl[ ( \widetilde{h}^{(n)}_1 - h^{(n)} )^2 \bigr] } 
	= 
	\frac{ (\widetilde{\theta}_n + \widetilde{\Theta}_n )^2 }{
		4 \widetilde{\theta}_n \widetilde{\Theta}_n} 
\end{equation}
holds,  
where $h^{(n)}_1$ and $\widetilde{h}^{(n)}_1$ are  
the best linear predictors of~$h^{(n)}$ based 
on the subspace  
${\cV_{n} := \bbR\oplus\operatorname{span}\{ \psi_n \}
	\subset\cH}$ 
and the measures   
$\mu$ and $\MU$, respectively. 
Moreover,  
\[
\widetilde{\theta}_n  
:= 
\min\bigl\{ 
\pE \bigl[ h^2 \bigr] / \PE \bigl[ h^2 \bigr] : 
h \in\operatorname{span}\bigl\{ \underline{\phi}_n, \bar{\phi}_n \bigr\}, \, 
h \neq 0 
\bigr\} , 
\]
and $\widetilde{\Theta}_n\in(0,\infty)$ is defined 
as $\widetilde{\theta}_n$ with $\min$ 
replaced by $\max$. 
Clearly, these definitions yield that    
$\widetilde{\theta}_n 
\leq 
\pE \bigl[ \underline{\phi}_n^2 \bigr] /
\PE \bigl[ \underline{\phi}_n^2 \bigr]  
< 
\underline{c}_n + \tfrac{\delta'}{3}$ 
and 
$\widetilde{\Theta}_n
\geq 
\pE\bigl[ \bar{\phi}_n^2 \bigr] / 
\PE\bigl[ \bar{\phi}_n^2 \bigr] 
> 
\overline{c}_n - \tfrac{\delta'}{3}$, 
which implies that  
$\widetilde{\Theta}_n - \widetilde{\theta}_n 
> 
\overline{c}_n - \underline{c}_n - \tfrac{2\delta'}{3} 
= \tfrac{\delta'}{3}$.  
As we have already derived 
fulfillment of Assumption~\ref{ass:kriging}.I, 
Proposition~\ref{prop:AssI}\ref{prop:AssI-i}$\,\Leftrightarrow\,$\ref{prop:AssI-ii}
shows that   
$\widetilde{\Theta}_n 
\leq 
\sup_{h\in \cH^0 \setminus\{0\} }
\frac{ \pE[ h^2 ] }{ \PE[ h^2 ] } 
\leq 
\norm{\cC^{1/2} \CC^{-1/2} }{\cL(L_2)}^2 
< \infty$. 
Define 
$\cH^\star_1 :=  \cV_1$ and, 
for $n\geq 2$, set $\cH^\star_n := 
\bbR\oplus 
\operatorname{span}\{ 
\gp_1, \ldots, \gp_{n-1}, \psi_n \}$. 
By the basis property of $\{\gp_j\}_{j\in\bbN}$ in $\cH^0$, 
see Lemma~\ref{lem:cH-ONB}\ref{lem:cH-ONB-ii}, 
the so constructed subspaces  
are admissible, i.e.,  
$\{\cH^\star_n\}_{n\in\bbN} \in\cS^\mu_{\mathrm{adm}}$. 
Since $h^{(n)} , \psi_n 
\in \operatorname{span}\{ \underline{\phi}_n, \bar{\phi}_n \}$ 
and since 
$\underline{\phi}_n, \bar{\phi}_n$   
are $\cH^0$-orthogonal 
to $\gp_1,
\ldots, \gp_{n-1}$, 
we obtain $h^{(n)}_n=h^{(n)}_1$, 
where~$h^{(n)}_n$ is the best linear predictor of $h^{(n)}$ 
based on~$\cH^\star_n$  
and~$\mu$. 
Thus, by using \eqref{eq:tilde-theta-equality} we obtain, for all $n\in\bbN$, 
\[   
\frac{\PE\bigl[ ( h_n^{(n)} - h^{(n)} )^2 \bigr] }{
	\PE\bigl[ ( \widetilde{h}^{(n)}_n - h^{(n)} )^2 \bigr] } 
- 
1 
\geq 
\frac{ (\widetilde{\Theta}_n - \widetilde{\theta}_n)^2 }{
	4 \widetilde{\theta}_n \widetilde{\Theta}_n } 
> 
\frac{ \delta^{\prime \, 2} }{ 36 \, \widetilde{\Theta}_n^2 }
\geq 
\frac{ \delta^{\prime \, 2} }{ 
	36 \norm{\cC^{1/2} \CC^{-1/2} }{\cL(L_2)}^4 }, 
\] 
a contradiction to \eqref{eq:kriging_gen_var2} 
for the sequence  
$\{\cH^\star_n\}_{n\in\bbN} \in\cS^\mu_{\mathrm{adm}}$,  
which proves  
that 
\eqref{eq:kriging_gen_var2} 
holding for all 
$\{\cH_n\}_{n\in\bbN} \in\cS^\mu_{\mathrm{adm}}$
implies
Assumption~\ref{ass:kriging}.III.

Next, we show that validity of \eqref{eq:kriging_gen_var3} 
for all $\{\cH_n\}_{n\in\bbN}\in\cS^\mu_{\mathrm{adm}}$
also implies that 
Assumption~\ref{ass:kriging}.III 
is satisfied. 
To this end, suppose that this 
assumption does not hold. 
It then again follows from Lemma~B.2
(see Appendix~B in the Supplementary Material~\cite{kbsup}) 
that there are $\delta\in(0,\infty)$ and, for all $n\in\bbN$, 
$\underline{a}_n, \overline{a}_n \in[\underline{\alpha},\overline{\alpha}]$ and 
$\underline{v}_n, 
\overline{v}_n 
\in H_{n}^\perp$, 
linearly independent,   
such that 
$\Bigl| 
\frac{ \scalar{\CC \underline{v}_n, \underline{v}_n }{L_2} }{ 
	\scalar{   \cC \underline{v}_n, \underline{v}_n }{L_2}} 
- 
\underline{a}_n     
\Bigr| 
< \frac{\delta}{3}$, 
$\Bigl| 
\frac{ \scalar{\CC \overline{v}_n, \overline{v}_n }{L_2} }{ 
	\scalar{   \cC \overline{v}_n, \overline{v}_n }{L_2}} 
- 
\overline{a}_n
\Bigr| 
< \frac{\delta}{3}$, 
and 
$\overline{a}_n - \underline{a}_n\geq \delta$ for all $n\in\bbN$. 
Define  
$\bar{h}^{(n)}  := \cJ  \overline{v}_n\in\cH^0$ 
and 
$\underline{h}^{(n)} := \cJ \underline{v}_n\in\cH^0$. 
Then, for 
$\cH^\star_{n}:=\bbR\oplus\operatorname{span}\{\gp_1,\ldots,\gp_n\} 
\subset \cH$ 
and all $n\in\bbN$, 
\begin{equation}\label{eq:proof:nec-Ass-I+III}
	\frac{\PV\bigl[ \bar{h}^{(n)}_n - \bar{h}^{(n)} \bigr]}{
		\pV\bigl[ \bar{h}^{(n)}_n - \bar{h}^{(n)} \bigr]} 
	- 
	\frac{\PV\bigl[ \underline{h}^{(n)}_n - \underline{h}^{(n)} \bigr]}{
		\pV\bigl[ \underline{h}^{(n)}_n - \underline{h}^{(n)} \bigr]} 
	= 
	\frac{\PV\bigl[ \bar{h}^{(n)} \bigr]}{
		\pV\bigl[ \bar{h}^{(n)} \bigr]} 
	- 
	\frac{\PV\bigl[ \underline{h}^{(n)} \bigr]}{
		\pV\bigl[ \underline{h}^{(n)} \bigr]} 
	\geq 
	\frac{\delta}{3}. 
\end{equation} 
Note that, if \eqref{eq:kriging_gen_var3} 
holds for all sequences  
$\{\cH_n \}_{n\in\bbN}\in\cS^\mu_{\mathrm{adm}}$, 
then, in particular, 
\[
\forall 
\{\cH_n \}_{n\in\bbN}\in\cS^\mu_{\mathrm{adm}}
: 
\qquad 
\lim\limits_{n\to\infty}
\biggl( 
\sup\limits_{g\in\cH_{-n}}
\frac{\PV[g_n - g]}{\pV[g_n - g]} 
- 
\inf\limits_{h\in\cH_{-n}}
\frac{\PV[\hn - h]}{\pV[\hn - h]} 
\biggr) = 0
\]
follows. 
Therefore, \eqref{eq:proof:nec-Ass-I+III} 
contradicts \eqref{eq:kriging_gen_var3}  
for the sequence 
$\{\cH^\star_n\}_{n\in\bbN}\in\cS^\mu_{\mathrm{adm}}$ 
and, thus, 
Assumption~\ref{ass:kriging}.III is satisfied 
if \eqref{eq:kriging_gen_var3} holds for 
all $\{\cH_n\}_{n\in\bbN}\in\cS^\mu_{\mathrm{adm}}$.

Finally, necessity of Assumption~\ref{ass:kriging}.III 
for validity of 
\eqref{eq:kriging_gen_var1} (or \eqref{eq:kriging_gen_var4}) 
holding 
for all $\{\cH_n\}_{n\in\bbN}\in\cS^\mu_{\mathrm{adm}}$
follows from changing the roles of $\mu$ and~$\MU$: 
If $\mu$ and $\MU$ are such that 
\eqref{eq:kriging_gen_var1} (or \eqref{eq:kriging_gen_var4}) 
holds for every 
$\{\cH_n\}_{n\in\bbN}\in\cS^\mu_{\mathrm{adm}}$, 
then \eqref{eq:kriging_gen_var2} 
(or \eqref{eq:kriging_gen_var3}) 
is true for the pair $\MU,\mu$ 
and every $\{\cH_n\}_{n\in\bbN}\in
\cS^{\mu}_{\mathrm{adm}}$. 
Since necessity of  
Assumption~\ref{ass:kriging}.I 
has already been derived, 
by Lemma~\ref{lem:S-mu}
we have 
$\cS^{\mu}_{\mathrm{adm}}=\cS^{\MU}_{\mathrm{adm}}$ 
so that the above arguments 
combined 
with Lemma~B.1 
in Appendix~B of the Supplementary Material~\cite{kbsup} 
show that Assumption~\ref{ass:kriging}.III 
also holds.
\end{proof}

\begin{proof}[Proof of Theorem~\ref{thm:kriging_general_var}]
Sufficiency and necessity 
of Assumptions~\ref{ass:kriging}.I 
and~\ref{ass:kriging}.III 
for \eqref{eq:kriging_gen_var1}--\eqref{eq:kriging_gen_var4} 
to hold for all 
$\{\cH_n\}_{n\in\bbN}\in\cS^\mu_{\mathrm{adm}}$ 
have been proven in Proposition~\ref{prop:sufficiency} 
and Lemma~\ref{lem:nec-Ass-I+III}, respectively. 
\end{proof}

\begin{lemma}\label{lem:mean_way_back}
Define the Gaussian measures $\muc=\normal(0, \cC)$ and  
$\mus=\normal(\widetilde{m}-m, \cC)$, 
with corresponding expectation operators $\pEc, \pEs$. 
Let $\hnc,\hns$ denote the best linear predictors of~$h$ 
based on~$\cH_n \in\{\cH_n \}_{n\in\bbN}\in\cS^{\mu}_{\mathrm{adm}}$ 
and the measures $\muc$ resp.\ $\mus$. 
For $n\in\bbN$ and $h\in\cH_{-n}$, 
consider the errors of the predictors, 
$\errc = \errc(h,n) := \hnc-h$,  
$\errs = \errs(h,n) := \hns-h$. Then,
\begin{equation}\label{eq:errors-m}
	\frac{
		\pEc\bigl[ \errs^2 \bigr]
	}{
		\pEc\bigl[ \errc^2 \bigr]
	} 
	- 1
	= 
	\frac{
		\pEs\bigl[ \errc^2 \bigr]
	}{
		\pEs\bigl[ \errs^2 \bigr]
	}
	-1 
	= 
	\left| 
	\frac{
		\pEs\bigl[ \errc^2 \bigr]
	}{
		\pEc\bigl[ \errc^2 \bigr]
	} 
	- 1
	\right| 
	= 
	\left|
	\frac{
		\pEc\bigl[ \errs^2 \bigr]
	}{
		\pEs\bigl[ \errs^2 \bigr]
	} 
	- 1
	\right| 	
	= 
	\frac{ \bigl| \pEs[ \errc ] \bigr|^2 }{
		\pEc\bigl[ \errc^2 \bigr]}.  
\end{equation}
Furthermore, for all 
$\{\cH_n \}_{n\in\bbN}\in\cS^{\mu}_{\mathrm{adm}}$,  
this term 
is bounded, uniformly 
with respect to $n\in\bbN$  
and $h\in\cH_{-n}$,  
if and only if 
Assumption~\ref{ass:kriging}.II 
is satisfied.  	
Under Assumption~\ref{ass:kriging}.II, 
$\lim\limits_{n\to\infty} 
\sup\nolimits_{h\in\cH_{-n}} 
\frac{ | \pEs[ \errc(h,n) ] |^2 }{
	\pEc[ \errc(h,n)^2 ]}=0$ 
holds for all
$\{\cH_n \}_{n\in\bbN}\in\cS^{\mu}_{\mathrm{adm}}$. 
\end{lemma}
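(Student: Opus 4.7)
The key observation is that $\muc$ and $\mus$ share the covariance operator $\cC$, so the covariance and variance functionals on $\cH$ coincide under the two measures. Parametrizing $\hn\in\cH_n$ as $\alpha_0+\sum_{j=1}^n\alpha_j y_{nj}^0$ along a basis $\{1,y_{n1}^0,\ldots,y_{nn}^0\}$ of $\cH_n$, the orthogonality system $\pE_\mu[(\hn-h)g_n]=0$ decouples into unbiasedness (from $g_n=1$) and a linear system for $(\alpha_j)_{j=1}^n$ involving only covariances (from $g_n=y_{nk}^0$); since $\muc$ and $\mus$ share the covariance, the coefficients $\alpha_j$ agree, so $\hnc$ and $\hns$ differ only by a deterministic constant $\Delta_n:=\hns-\hnc\in\bbR$.

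Expanding $\pEc[\errs^2]=\pEc[(\errc+\Delta_n)^2]=\pEc[\errc^2]+\Delta_n^2$ via $\pEc[\errc]=0$, and analogously $\pEs[\errc^2]=\pEs[\errs^2]+\Delta_n^2$ via $\pEs[\errs]=0$, together with $\Delta_n=\pEc[\errs]=-\pEs[\errc]$ (unbiasedness) and $\pEc[\errc^2]=\pEs[\errs^2]$ (variances coincide and $\errs-\errc=\Delta_n$ is deterministic), yields that all four ratios in \eqref{eq:errors-m} equal $\Delta_n^2/\pEc[\errc^2]=|\pEs[\errc]|^2/\pEc[\errc^2]$. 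The moduli are redundant because the signed differences $\pEs[\errc^2]-\pEc[\errc^2]$ and $\pEc[\errs^2]-\pEs[\errs^2]$ both equal $\Delta_n^2\geq 0$.

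To characterize boundedness, I lift $\errc\in\cH^0$ through the isometry $\cJ\from H\to\cH^0$ of Lemma~\ref{lem:cH-ONB}\ref{lem:cH-ONB-iii}: write $\errc=\cJ v_{\text{err}}$ with $v_{\text{err}}\in H$, so $\pEc[\errc^2]=\|v_{\text{err}}\|_H^2$ and $\pEs[\errc]=\scalar{\widetilde{m}-m,v_{\text{err}}}{L_2}$ (interpreted via the $H^*,H$ duality for general $v_{\text{err}}\in H$). The target ratio then becomes $|\scalar{\widetilde{m}-m,v_{\text{err}}}{L_2}|^2/\|v_{\text{err}}\|_H^2$. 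If Assumption~\ref{ass:kriging}.II holds, Cauchy--Schwarz in this duality pairing bounds the ratio uniformly by $\|\widetilde{m}-m\|_{H^*}^2$. Conversely, for any unit $v\in L_2\cap H$, I extend it to an orthonormal basis $\{v,u_2,u_3,\ldots\}$ of $H$ and set $\cH_1^0:=\cJ(\operatorname{span}\{u_2\})$, $\cH_n^0:=\cJ(\operatorname{span}\{v,u_2,\ldots,u_n\})$ for $n\geq 2$; this sequence is admissible since $\bigcup_{n\geq 2}\cJ^{-1}(\cH_n^0)$ is dense in $H$, and choosing $h:=\cJ v$ at $n=1$ forces $\hnc=0$ and therefore $v_{\text{err}}=-v$. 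The uniform bound then gives $|\scalar{\widetilde{m}-m,v}{L_2}|^2\leq C$ for every such $v$, and density of $L_2\cap H$ in $H$ combined with the Riesz representation theorem yields $\widetilde{m}-m\in H^*$.

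For the final convergence claim under Assumption~\ref{ass:kriging}.II, let $\tilde{f}\in H$ be the Riesz representative of $v\mapsto\scalar{\widetilde{m}-m,v}{L_2}$ on $H$ and write $v_h:=\cJ^{-1}(h-\pEc[h])\in H$, so that $v_{\text{err}}=-P_n^\perp v_h$ where $P_n$ is the $H$-orthogonal projection onto $U_n:=\cJ^{-1}(\cH_n^0)$. Self-adjointness of $P_n^\perp$ yields $(\tilde{f},P_n^\perp v_h)_H=(P_n^\perp\tilde{f},P_n^\perp v_h)_H$, so Cauchy--Schwarz gives $|\pEs[\errc]|^2/\pEc[\errc^2]\leq\|P_n^\perp\tilde{f}\|_H^2$. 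Admissibility of $\{\cH_n\}$ forces $\bigcup_n U_n$ dense in $H$, hence $\|P_n^\perp\tilde{f}\|_H\to 0$ independently of $h$, giving uniform convergence to zero. The main obstacle is the necessity half of the boundedness characterization: one must realize arbitrary unit directions $v\in L_2\cap H$ as error directions $v_{\text{err}}$ for some admissible sequence while keeping $\bigcup_n\cH_n^0$ dense in $\cH^0$, which the construction above handles by deferring the inclusion of $v$ until step $n=2$.
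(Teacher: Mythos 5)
Your derivation of the identities \eqref{eq:errors-m} is correct and in fact more self-contained than the paper's: the observation that $\hns-\hnc$ is a deterministic constant $\Delta_n=\pEc[\errs]=-\pEs[\errc]$ (because the two orthogonality systems share the same covariance form and differ only in the unbiasedness equation) is essentially the content that the paper outsources to Lemma~C.1 of the supplement, and your elementary expansion then yields all four ratios at once. The sufficiency direction and the final convergence claim are also correct and amount to the paper's argument transported to $H$ via $\cJ$: your bound $|\pEs[\errc]|^2/\pEc[\errc^2]\le\|P_n^\perp\,\cC^{-1}(\widetilde m-m)\|_H^2$ is the paper's $\bigl\|Q_n^\perp\cC^{-1/2}(m-\widetilde m)\bigr\|_{L_2}^2$ in disguise. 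One small imprecision: for non-nested $\{\cH_n\}_{n\in\bbN}$, density of $\bigcup_n U_n$ in $H$ does not by itself give $\|P_n^\perp v\|_H\to0$; but admissibility \eqref{eq:ass:Hn-dense} is literally the statement that $\|P_n^\perp v\|_H\to0$ for every $v\in H$, so the conclusion stands.

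The genuine gap is in the necessity half of the boundedness characterization, and it is a quantifier problem. The lemma asserts that \emph{for each} admissible sequence the ratio is bounded uniformly in $n$ and $h$, with a bound that may depend on the sequence; this per-sequence reading is what is actually available when the lemma is invoked in the proof of Theorem~\ref{thm:kriging_general}, where the hypothesis is a limit statement such as \eqref{eq:kriging_gen2} holding sequence by sequence. Your construction produces, for each unit direction $v$, a \emph{different} admissible sequence whose ratio at the fixed stage $n=1$ equals $|\scalar{\widetilde m-m,v}{L_2}|^2$. From per-sequence bounds you therefore only obtain $|\scalar{\widetilde m-m,v}{L_2}|^2\le C_v$ with a constant depending on $v$, which is vacuous; and since the bad behaviour sits at the fixed index $n=1$, your construction also cannot contradict any statement about limits as $n\to\infty$. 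To close the gap you must realize increasingly bad directions at stages $n\to\infty$ within a \emph{single} admissible sequence. This is what the paper does: if $m-\widetilde m\notin H^*$, the functional $v\mapsto\scalar{m-\widetilde m,v}{L_2}$ is unbounded on $\bigl(L_2,\norm{\,\cdot\,}{H}\bigr)$ and remains unbounded on the finite-codimension subspaces $L_2\cap H_n^\perp$, so one may choose $\overline v_n\in L_2\cap H_n^\perp$ with $\scalar{m-\widetilde m,\overline v_n}{L_2}\ge n\norm{\overline v_n}{H}$ and test $h^{(n)}:=\cJ\overline v_n$ against the fixed sequence $\cH_n^\star=\bbR\oplus\operatorname{span}\{\gp_1,\ldots,\gp_n\}$, for which the stage-$n$ predictor of $h^{(n)}$ vanishes and the ratio is at least $n^2$.
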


\begin{proof} 
Let $n\in\bbN$ and $h\in\cH_{-n}$. 
By  
$\cH$-orthogonality of 
$\errc =  \hnc - h$ to $\cH_n$ 
we obtain  
\[
\pEc\bigl[ \errs^2 \bigr]
- 
\pEc\bigl[ \errc^2 \bigr] 
=
\pEc\bigl[ \errs \errc \bigr] 
+ 
\pEc\bigl[ \errs ( \hns - \hnc )  \bigr]
- 
\pEc\bigl[ \errc  \errs \bigr] 
= 
\pEc\bigl[ (\hns - \hnc)^2  \bigr].   
\]  
Since $\muc$ and 
$\mus$ have the same covariance 
operator, we can combine the 
above equality with (C.1)
from Lemma~C.1 
(see Appendix~C in the Supplementary Material~\cite{kbsup}) 
which gives  
$\frac{ \pEc[ \errs^2 ]}{
	\pEc[ \errc^2 ]} - 1
= 
\frac{ \pEc[ (\hns - \hnc)^2 ]}{
	\pEc[ \errc^2 ]}
=  
\frac{ | \pEs[ \errc ] |^2 }{
	\pEc[ \errc^2 ]}$. 
Noting that 
$\bigl| \pEc[ \errs ] \bigr| = \bigl| \pEs[ \errc ] \bigr|$ 
and 
$\pEs\bigl[ \errs^2 \bigr] = \pEc\bigl[ \errc^2 \bigr]$ 
due to the identical covariance operators of
$\muc, \mus$  
yields the relation   
$\frac{
	\pEs[ \errc^2 ]
}{
	\pEs[ \errs^2 ]
}
-1 
= 
\frac{ | \pEs[ \errc ] |^2 }{
	\pEc[ \errc^2 ]}$. 
Next, again by equality of the covariance operators, we
find that 
$\pEs\bigl[ \errc^2 \bigr] - \pEc\bigl[ \errc^2 \bigr]
= \bigl| \pEs[ \errc ] \bigr|^2$ 
and  
$\pEc\bigl[ \errs^2 \bigr] - \pEs\bigl[ \errs^2 \bigr]
= \bigl| \pEc[ \errs ] \bigr|^2 
= \bigl| \pEs[ \errc ] \bigr|^2$ 
which completes the proof 
of \eqref{eq:errors-m}. 

Now suppose that 
Assumption~\ref{ass:kriging}.II 
is satisfied  
and let 
$\{\cH_n\}_{n\in\bbN} \in\cS^\mu_{\mathrm{adm}}$. 
Then, we obtain 
$\lim\limits_{n\to\infty} 
\sup\nolimits_{h\in\cH_{-n}} 
\frac{ | \pEs[ \errc(h,n) ] |^2 }{
	\pEc[ \errc(h,n)^2 ]}=0$ 
as in \eqref{eq:term-B} 
with $\cC=\CC$.
In particular,  
there exists $K\in(0,\infty)$ 
such that 
$\sup\nolimits_{n\in\bbN} 
\sup\nolimits_{h\in\cH_{-n}} 
\frac{ | \pEs[ \errc(h,n) ] |^2 }{
	\pEc[ \errc(h,n)^2 ]} \leq K$.

Finally, assume that
$m-\widetilde{m}\notin H^* = \cC^{1/2}(L_2)$, 
i.e., Assumption~\ref{ass:kriging}.II 
is not satisfied. 
For $n\in\bbN$, 
define $H_n = \operatorname{span}\{v_1,\ldots,v_n\}$, 
where $\{v_j\}_{j\in\bbN}$ is the orthonormal basis 
of $H$ from Lemma~\ref{lem:cH-ONB}\ref{lem:cH-ONB-i}, 
and let $H_n^\perp$ be the 
$H$-orthogonal complement of $H_n$.  
Since $m-\widetilde{m}\notin H^*$ and $L_2$ is dense in $H$, 
we can find  
$\{\overline{v}_n\}_{n\in\bbN}\subset L_2\setminus\{0\}$ 
such that  
$\scalar{m-\widetilde{m},\overline{v}_n}{L_2} 
\geq 
n \norm{\overline{v}_n}{H}$. 
Furthermore, we may pick 
$\overline{v}_n$ in 
$H_n^\perp\subset H$, 
since $\dim(H_n)<\infty$. 
In summary, 
\[
\forall n \in \bbN  
\quad 
\exists \overline{v}_n \in L_2\cap H_n^\perp, \; 
\overline{v}_n\neq 0 : 
\quad 
\scalar{ m-\widetilde{m}, \overline{v}_n }{L_2}  
\geq 
n \sqrt{\scalar{\cC \overline{v}_n, \overline{v}_n }{L_2}} . 
\]
By \eqref{eq:cJ-iso:scalar} 
$h^{(n)} := \cJ \overline{v}_n \in \cH^0$ 
is $\cH^0$-orthogonal 
to $\cH^0_n := \operatorname{span}\{\gp_1, \ldots, \gp_n\}$ 
if $\{ \gp_j \}_{j\in\bbN}$ 
is the orthonormal basis for $\cH^0$  
from Lemma~\ref{lem:cH-ONB}\ref{lem:cH-ONB-ii}. 
Therefore, the kriging predictor of $h^{(n)}$ 
based on $\cH^\star_n:=\bbR\oplus\cH_n^0$  
and $\muc = \normal(0,\cC)$ vanishes, 
${h_n^{(n),c} = 0}$. 
Thus, 
there exist 
square-summable coefficients $\bigl\{ c_j^{(n)} \bigr\}_{j>n}$ 
such that   
${h^{(n)} = \sum_{j>n} c_j^{(n)} \gp_j}$ and  
$\overline{v}_n = \sum_{j>n} c_j^{(n)} v_j$, 
and we find that 
\[
\frac{ \bigl| \pEs\bigl[ h_n^{(n),c} - h^{(n)} \bigr] \bigr|^2 }{
	\pEc\bigl[  ( h_n^{(n),c} - h^{(n)} )^2 \bigr]}  
=
\frac{ \bigl| \pEs\bigl[ h^{(n)} \bigr] \bigr|^2 }{
	\pEc\bigl[  | h^{(n)} |^2 \bigr]} 
= 
\frac{ \bigl| 
	\pEs\bigl[ 
	\sum\nolimits_{j>n} c_j^{(n)} \scalar{\GP^0, v_j}{L_2} 
	\bigr] 
	\bigr|^2 }{
	\scalar{\cC \overline{v}_n, \overline{v}_n}{L_2} } 
= 
\frac{ \scalar{\widetilde{m}-m, \overline{v}_n}{L_2}^2 }{
	\scalar{\cC \overline{v}_n, \overline{v}_n}{L_2} } 
\geq 
n^2.    
\]
Furthermore, 
$\{\cH^\star_n\}_{n\in\bbN}\in\cS^{\mu}_{\mathrm{adm}}$ 
so that this yields a contradiction. 
\end{proof}

\begin{proof}[Proof of Theorem~\ref{thm:kriging_general}]
In this proof, all references starting with ``C'' 
are referring to Appendix~C 
in the Supplementary Material~\cite{kbsup}. 
As shown in 
(C.2)--(C.5), see
Proposition~C.2, 
we can equivalently prove the claim for 
the pair of measures 
$\muc=\normal(0,\cC)$,  
$\MUs=\normal(\widetilde{m}-m, \CC)$  
in place of $\mu=\normal(m,\cC)$ and 
$\MU=\normal(\widetilde{m},\CC)$.  
Sufficiency of Assumptions~\ref{ass:kriging}.I--III  
for each of the assertions 
\eqref{eq:kriging_gen1}--\eqref{eq:kriging_gen4} 
to hold for all 
$\{\cH_n\}_{n\in\bbN} \in \cS^\mu_{\mathrm{adm}}$
is shown in Proposition~\ref{prop:sufficiency}. 

Conversely, if \eqref{eq:kriging_gen1} 
(or \eqref{eq:kriging_gen2})
holds for $\muc, \MUs$ and all 
$\{\cH_n\}_{n\in\bbN} \in \cS^\mu_{\mathrm{adm}}$, 
then 
by (C.7)
the relation 
\eqref{eq:kriging_gen_var1} 
(or \eqref{eq:kriging_gen_var2}) 
holds for  
the pair $\mu,\MU$ 
and all 
$\{\cH_n\}_{n\in\bbN} \in \cS^\mu_{\mathrm{adm}}$.  
By Lemma~\ref{lem:nec-Ass-I+III}
Assumptions~\ref{ass:kriging}.I 
and~\ref{ass:kriging}.III  
have to be satisfied. 
Subsequently, necessity of Assumption~\ref{ass:kriging}.II  
for \eqref{eq:kriging_gen2} 
follows from (C.8) combined with 
Lemma~\ref{lem:mean_way_back}. 
Since we have already derived Assumption~\ref{ass:kriging}.I, 
we may also combine (C.8)
with Lemma~\ref{lem:mean_way_back} 
applied for $\MUc$ and $\MUs$, 
showing necessity of Assumption~\ref{ass:kriging}.II 
for \eqref{eq:kriging_gen1}. 

If \eqref{eq:kriging_gen3} 
(or \eqref{eq:kriging_gen4})
holds for $\muc,\MUs$ 
and all 
$\{\cH_n\}_{n\in\bbN} \in \cS^\mu_{\mathrm{adm}}$, 
then (C.6) combined 
with 
Proposition~\ref{prop:AssI}\ref{prop:AssI-i}$\,\Leftrightarrow\,$\ref{prop:AssI-iv}, 
see also Remark~C.3, 
show that Assumption~\ref{ass:kriging}.I  
has to be satisfied. 
Thereafter, (C.6) and 
Lemma~\ref{lem:mean_way_back}, 
(using $\muc,\mus$ for \eqref{eq:kriging_gen3} and 
$\MUc,\MUs$ for \eqref{eq:kriging_gen4})  
show that also Assumption~\ref{ass:kriging}.II  
holds. 
Finally,  
again the identity in (C.6) 
implies \eqref{eq:kriging_gen_var3} 
(or \eqref{eq:kriging_gen_var4})
for the pair $\mu,\MU$ and 
all $\{\cH_n\}_{n\in\bbN} \in \cS^\mu_{\mathrm{adm}}$  
and Assumption~\ref{ass:kriging}.III 
follows from Lemma~\ref{lem:nec-Ass-I+III}. 
\end{proof}

\section{Simplified necessary and sufficient conditions}
\label{section:simplified} 

In order to exploit Theorem~\ref{thm:kriging_general} 
to check if two models provide uniformly asymptotically 
equivalent linear predictions, 
one has to verify 
Assumptions~\ref{ass:kriging}.I--III. 
Depending on the form of the covariance operators, 
this may be difficult. 
In this section we provide equivalent formulations of 
Assumptions~\ref{ass:kriging}.I and~III    
for two important cases: 
\begin{enumerate*}[label=\arabic*.]  
\item the two covariance operators 
diagonalize with respect to the same eigenbasis, 
and   
\item $\varrho,\widetilde{\varrho}\from\cX\times\cX\to\bbR$ 
are covariance functions  
of weakly stationary random fields on $\cX\subset\bbR^d$, 
a-priori defined on all of $\bbR^d$ and with spectral densities 
$f,\widetilde{f}\from\bbR^d\to[0,\infty)$. 
\end{enumerate*} 

\subsection{Common eigenbasis}

In the case that the two covariance operators 
diagonalize with respect to the same eigenbasis, 
conditions~I and~III of Assumption~\ref{ass:kriging} 
can be formulated as conditions on  
the ratios of the eigenvalues. 
We consider this scenario in the next corollary. 

\begin{corollary}\label{cor:equal_ef}
Suppose that $\cC,\CC$ are self-adjoint, positive definite, 
compact operators on $L_2(\cX,\nu_\cX)$ which 
diagonalize with respect to the same orthonormal basis  
$\{e_j\}_{j\in\bbN}$ for $L_2(\cX,\nu_\cX)$, i.e., 
there exist corresponding eigenvalues 
$\gamma_j, \widetilde{\gamma}_j \in (0,\infty)$, $j\in\bbN$, 
accumulating only at zero such that 
$\cC e_j = \gamma_j e_j$ and $\CC e_j = \widetilde{\gamma}_j e_j$ 
for all $j\in\bbN$. 
Then Assumptions~\ref{ass:kriging}.I and~\ref{ass:kriging}.III 
are satisfied if and only if 
there exists   
$a\in(0,\infty)$ 
such that $\lim_{j\to\infty} 
\widetilde{\gamma}_j / \gamma_j = a$. 
\end{corollary}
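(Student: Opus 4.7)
The plan is to exploit the common diagonalization to reduce Assumptions~\ref{ass:kriging}.I and~\ref{ass:kriging}.III to conditions on the single sequence of ratios $r_j := \widetilde{\gamma}_j/\gamma_j$, and then invoke the standard characterization of compact/bounded diagonal operators on a separable Hilbert space.

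First, I would set up the diagonal action. Since $\{e_j\}_{j\in\bbN}$ is an orthonormal basis for $L_2(\cX,\nu_\cX)$ simultaneously diagonalizing $\cC$ and $\CC$, the operators $\cC^{\pm 1/2}$ and $\CC^{\pm 1/2}$ act as $\cC^{\pm 1/2}e_j = \gamma_j^{\pm 1/2}e_j$, $\CC^{\pm 1/2}e_j = \widetilde{\gamma}_j^{\pm 1/2}e_j$ on the relevant domains. Consequently, $\CC^{1/2}\cC^{-1/2}$ is densely defined by $\CC^{1/2}\cC^{-1/2}e_j = r_j^{1/2}e_j$, and $\cC^{-1/2}\CC\cC^{-1/2}$ is densely defined by $\cC^{-1/2}\CC\cC^{-1/2}e_j = r_j e_j$.

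Next, I would handle Assumption~\ref{ass:kriging}.I. By the equivalence \ref{prop:AssI-i}$\Leftrightarrow$\ref{prop:AssI-ii} in Proposition~\ref{prop:AssI}, Assumption~\ref{ass:kriging}.I holds if and only if $\CC^{1/2}\cC^{-1/2}$ extends to an isomorphism on $L_2(\cX,\nu_\cX)$. A diagonal operator with positive entries $\{r_j^{1/2}\}_{j\in\bbN}$ is bounded with bounded inverse on $L_2(\cX,\nu_\cX)$ if and only if
\[
0 < \inf_{j\in\bbN} r_j \leq \sup_{j\in\bbN} r_j < \infty.
\]
This gives a clean equivalent formulation of Assumption~\ref{ass:kriging}.I.

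Then I would translate Assumption~\ref{ass:kriging}.III. Under the common diagonalization, $T_a$ is the diagonal operator with eigenvalues $\{r_j - a\}_{j\in\bbN}$ on $\{e_j\}_{j\in\bbN}$. A diagonal operator on an infinite-dimensional separable Hilbert space is compact if and only if its diagonal entries tend to zero; hence Assumption~\ref{ass:kriging}.III holds (for some $a\in(0,\infty)$) if and only if $\lim_{j\to\infty} r_j = a$ for that value of $a$, which in particular requires $\sup_{j} r_j < \infty$.

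Finally, I would combine the two reductions. For the forward implication: if $r_j \to a \in (0,\infty)$, then $r_j$ is bounded, and since each $r_j > 0$ and the tail is bounded away from zero (by $a/2$, say, for large $j$), also $\inf_j r_j > 0$; thus both Assumption~\ref{ass:kriging}.I and~\ref{ass:kriging}.III hold with this $a$. Conversely, if both Assumptions~\ref{ass:kriging}.I and~\ref{ass:kriging}.III are satisfied, then by the diagonal characterization of Assumption~\ref{ass:kriging}.III we have $r_j \to a$ for some $a\in(0,\infty)$, and Assumption~\ref{ass:kriging}.I (or already the requirement $a>0$ in Assumption~\ref{ass:kriging}.III itself) guarantees $a>0$. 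There is no real obstacle here beyond checking the standard fact that a diagonal operator is compact iff its entries tend to zero and bounded (with bounded inverse) iff its entries are bounded (away from zero); everything else is just bookkeeping.
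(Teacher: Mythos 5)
Your proposal is correct and follows essentially the same route as the paper: both reduce Assumption~\ref{ass:kriging}.I to the isomorphism property of $\CC^{1/2}\cC^{-1/2}$ via Proposition~\ref{prop:AssI}, observe that $\cC^{-1/2}\CC\cC^{-1/2}$ and $T_a$ diagonalize in the basis $\{e_j\}_{j\in\bbN}$ with eigenvalues $\widetilde{\gamma}_j/\gamma_j$ and $\widetilde{\gamma}_j/\gamma_j - a$ respectively, and conclude with the standard characterizations of bounded-with-bounded-inverse and compact diagonal operators. The only cosmetic difference is that you make the condition $0<\inf_j r_j\leq\sup_j r_j<\infty$ explicit as an intermediate reformulation of Assumption~\ref{ass:kriging}.I, which the paper leaves implicit.
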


\begin{proof} 
We start by showing that 
$\lim_{j\to\infty} 
\widetilde{\gamma}_j / \gamma_j = a\in(0,\infty)$
is sufficient for 
Assumptions~\ref{ass:kriging}.I and~\ref{ass:kriging}.III. 
By Proposition~\ref{prop:AssI}, 
Assumption~\ref{ass:kriging}.I is equivalent to 
requiring that $\CC^{1/2} \cC^{-1/2}$ is 
an isomorphism on $L_2$.  
If $\cC$ and $\CC$ admit the 
same eigenbasis~$\{e_j\}_{j\in\bbN}$, 
then these are also eigenvectors of 
the self-adjoint, positive definite linear   
operator $\cC^{-1/2} \CC \cC^{-1/2}$
with corresponding eigenvalues 
$\{\widetilde{\gamma}_j / \gamma_j\}_{j\in\bbN}$. 
By assumption  
this sequence 
converges. Hence,  
$\norm{\CC^{1/2} \cC^{-1/2}}{\cL(L_2)}^2 
= 
\sup_{\norm{v}{L_2}=1} 
\scalar{\cC^{-1/2} \CC \cC^{-1/2} v, v}{L_2} 
= 
\sup_{j\in\bbN}  \widetilde{\gamma}_j / \gamma_j  
\in(0,\infty)$ 
follows,  
and 
$\lim_{j\to\infty} \gamma_j / \widetilde{\gamma}_j  = 1/a$ 
implies that 
$\norm{\cC^{1/2} \CC^{-1/2}}{\cL(L_2)}^2 
=\sup_{j\in\bbN} \gamma_j / \widetilde{\gamma}_j 
\in(0,\infty)$  
by the same argument. 
Thus, 
Assumption~\ref{ass:kriging}.I 
is satisfied. 
Furthermore, also Assumption~\ref{ass:kriging}.III 
follows, since $T_a = \cC^{-1/2} \CC \cC^{-1/2} - a\cI$
diagonalizes with respect to $\{e_j\}_{j\in\bbN}$ 
with corresponding eigenvalues 
$\{\widetilde{\gamma}_j / \gamma_j - a\}_{j\in\bbN}$ 
which by assumption accumulate only   
at zero and, hence, $T_a$ is compact 
on $L_2(\cX,\nu_\cX)$. 
Conversely, if Assumptions~\ref{ass:kriging}.I 
and~\ref{ass:kriging}.III are satisfied, then 
by the latter there exists $a\in(0,\infty)$
such that $T_a$ is compact and 
$\{\widetilde{\gamma}_j / \gamma_j - a\}_{j\in\bbN}$ 
is a null sequence, i.e., 
$\{\widetilde{\gamma}_j / \gamma_j\}_{j\in\bbN}$ 
converges to $a\in(0,\infty)$. 
\end{proof}

\subsection{Weakly stationary random fields}\label{subsec:spectral-density} 

We consider 
a connected, compact subset $\cX$ of $\bbR^d$ 
equipped with the Euclidean metric and the Lebesgue measure~$\lambda_d$. 
For brevity we omit $\lambda_d$ 
in the notations $L_2(\cX), L_2(\bbR^d)$. 
We assume that 
the operators $\cC,\CC\from L_2(\cX) \to L_2(\cX)$~are 
induced by continuous, (strictly) positive definite kernels  
$\varrho|_{\cX\times\cX}$ and $\widetilde{\varrho}|_{\cX\times\cX}$
which are restrictions of 
translation invariant covariance functions 
$\varrho, \widetilde{\varrho}\from \bbR^d \times \bbR^d \to \bbR$. 
Translation invariance of  $\varrho$ implies that there 
exists an even 
function $\varrho_0 \from \bbR^d \to \bbR$ such 
that $\varrho(x,x') = \varrho_0(x-x')$ for all $x,x'\in\bbR^d$, 
and similarly~$\widetilde{\varrho}_0$ is defined for $\widetilde{\varrho}$. 
We assume $\varrho_0, \widetilde{\varrho}_0 \in L_1(\bbR^d)$,  
such that the corresponding 
spectral densities $f,\widetilde{f}\from\bbR^d\to[0,\infty)$ 
exist. 
Recall that the spectral density $f$ 
and $\varrho_0$ relate via 
the inversion formula (see e.g.\ \cite[][p.~25]{stein99}): 
For all $\omega\in\bbR^d$,  
we have 
\begin{equation}\label{eq:rho0-f} 
f(\omega) 
= 
\frac{1}{(2\pi)^d}
(\cF\varrho_0)(\omega), 
\qquad 
(\cF\varrho_0)(\omega) 
:=  
\int_{\bbR^d} 
\exp(-i \omega\cdot x) 
\varrho_0(x) \, \rd x. 
\end{equation}
Using this convention for the Fourier transform $\cF$, 
its inverse becomes 
\[
\bigl(\cF^{-1} \hat{v} \bigr)(x) 
= 
\frac{1}{(2\pi)^d} 
\int_{\bbR^d} 
\exp(i \omega\cdot x) 
\hat{v}(\omega) \, \rd \omega, 
\qquad 
x\in\bbR^d. 
\]
Let the linear operator 
$\cF_\cX \from L_2(\cX) \to L_2(\bbR^d;\bbC)$ be 
the composition 
$\cF_\cX := \cF \circ E^0_\cX$, 
where $E_\cX^0$ is the zero extension 
$L_2(\cX) \ni w \mapsto E_\cX^0 w \in L_1(\bbR^d)\cap L_2(\bbR^d)$ 
that sets  
$(E^0_\cX w)(x) = 0$ for all $x\in\bbR^d\setminus\cX$. 
We then consider the following subset 
of the space of complex-valued square-integrable functions $L_2(\bbR^d;\bbC)$,  
which itself is a
vector space over $\bbR$, 
\[ 
\cF_\cX(L_2(\cX)) 
= 
\bigl\{ \hat{w} \from\bbR^d \to \bbC  \;\big|\; \exists w \in L_2(\cX) : \hat{w} = \cF_\cX w \bigr\} 
\subset L_2(\bbR^d;\bbC), 
\] 
and define the Hilbert space 
$H_f$ (over $\bbR$) as the closure of $\cF_\cX(L_2(\cX))$ 
with respect to norm induced by the 
weighted $L_2(\bbR^d;\bbC)$-inner product 
with weight $f$, 
\begin{equation}\label{eq:def:Hf}    
\scalar{\hat{v}_1, \hat{v}_2 }{H_f} 
:= 
\int_{\bbR^d} f(\omega) \hat{v}_1(\omega) \overline{\hat{v}_2(\omega)} \, \rd \omega, 
\qquad 
H_f := \overline{\cF_\cX(L_2(\cX))}^{\norm{\,\cdot\,}{H_f}}. 
\end{equation} 
We recall the Hilbert space~$H=\cC^{-1/2}(L_2(\cX))$ 
with inner product $\scalar{\,\cdot\,, \,\cdot\,}{H} = \scalar{\cC\,\cdot\,, \,\cdot\,}{L_2(\cX)}$ 
from Lemma~\ref{lem:cH-ONB}\ref{lem:cH-ONB-i}
and find by invoking \eqref{eq:rho0-f} that, for all $v_1,v_2\in L_2(\cX)$,  
\begin{align} 
( \cF_\cX v_1, \cF_\cX v_2 )_{H_f} 
& = 
\frac{1}{(2\pi)^d} 
\int_{\bbR^d} (\cF \varrho_0)(\omega) 
( \cF_\cX v_1 )(\omega) \overline{ ( \cF_\cX v_2 )(\omega)} \, \rd \omega 
\notag 
\\
& = 
\frac{1}{(2\pi)^d} 
\int_{\bbR^d} \cF \bigl(\varrho_0 * \bigl( E^0_\cX v_1 \bigr)\bigr) (\omega) 
\overline{ \cF \bigl(E^0_\cX v_2\bigr) (\omega)} \, \rd \omega 
\notag 
\\
& = 
\bigl(\varrho_0 * \bigl( E^0_\cX v_1 \bigr), E^0_\cX v_2 \bigr)_{L_2(\bbR^d)} 
= 
\scalar{\cC v_1, v_2}{L_2(\cX)} 
= 
\scalar{ v_1, v_2}{H}. 
\label{eq:Hf-H-ip} 
\end{align} 
By density of $L_2(\cX)$ in $H$ and of $\cF_\cX(L_2(\cX))$ in $H_f$, 
$\cF_\cX$ thus admits a unique continuous linear extension to  
an inner product preserving isometric isomorphism 
between $H$ and~$H_f$. 
Its inverse $\cF_\cX^{-1}\from H_f \to H$ is  
the unique continuous linear extension of 
${R_\cX\circ \cF^{-1}\from}$ $\cF_\cX (L_2(\cX)) \to L_2(\cX)\subset H$, 
where $R_\cX\from L_2(\bbR^d) \to L_2(\cX)$ denotes the restriction 
to~$\cX$. 

\begin{proposition}\label{prop:spectral-density} 
Suppose that the 
self-adjoint, positive definite, compact 
operators 
$\cC,\CC\from L_2(\cX) \to L_2(\cX)$  
are induced by restrictions (to $\cX\times\cX$) of 
translation invariant covariance functions 
$\varrho, \widetilde{\varrho}\from \bbR^d \times \bbR^d \to \bbR$, 
which have spectral densities 
$f, \widetilde{f}\from\bbR^d\to[0,\infty)$ 
defined via \eqref{eq:rho0-f}. 
Then, Assumptions~\ref{ass:kriging}.I and~\ref{ass:kriging}.III 
are satisfied if and only if: 
\begin{enumerate}[label={\normalfont(\Roman*')}, labelsep=7mm, topsep=4pt]
	\item\label{prop:spectral-density-i}\hspace*{-3.4mm} 
	The spaces $H_f$ and $H_{\widetilde{f}}$ 
	are isomorphic with equivalent norms, i.e., 
	there exist constants $0<k\leq K <\infty$ such that 
	\begin{equation}\label{eq:spectral-density-i}  
		k \norm{\hat{v}}{H_f}^2 \leq 
		\int_{\bbR^d} \widetilde{f}(\omega) |\hat{v}(\omega)|^2 \, \rd \omega 
		\leq K \norm{\hat{v}}{H_f}^2 
		\quad  
		\forall \hat{v}\in \cF_\cX(L_2(\cX)). 
	\end{equation} 
	\setcounter{enumi}{2}
	\item\label{prop:spectral-density-ii}\hspace*{-6.1mm}  
	There exists  
	$a\in(0,\infty)$ 
	such that the linear operator $\widehat{T}_a := S - a\cI_{H_f}$ 
	is compact on~$H_f$, where $\cI_{H_f}$ denotes the identity on $H_f$ and 
	$S\from H_f \to H_f$ is defined by 
	\begin{equation}\label{eq:spectral-density-ii}  
		\scalar{S \hat{v}_1, \hat{v}_2}{H_f}  
		=
		\int_{\bbR^d} \widetilde{f}(\omega) \hat{v}_1(\omega) \overline{\hat{v}_2(\omega)} \, \rd \omega 
		\qquad 
		\forall \hat{v}_1, \hat{v}_2\in H_f. 
	\end{equation}
\end{enumerate} 
\end{proposition}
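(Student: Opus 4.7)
My plan is to translate Assumptions~\ref{ass:kriging}.I and~\ref{ass:kriging}.III across the isometric isomorphism $\cF_\cX\from H \to H_f$ established in~\eqref{eq:Hf-H-ip}. The very same computation carried out there, but with $\widetilde\varrho_0$ and $\widetilde f$ in place of $\varrho_0$ and $f$, produces an analogous isometric isomorphism $\cF_\cX\from\widetilde H \to H_{\widetilde f}$, and this twin identification is the second key ingredient throughout.

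For the equivalence of Assumption~\ref{ass:kriging}.I with~\ref{prop:spectral-density-i}, I would first use Proposition~\ref{prop:AssI}\ref{prop:AssI-i}$\,\Leftrightarrow\,$\ref{prop:AssI-iii} to replace Assumption~\ref{ass:kriging}.I by norm equivalence of the Hilbert spaces $H$ and $\widetilde H$. Since $L_2(\cX)\subset H\cap\widetilde H$ is dense in both $H$ and $\widetilde H$ (because the ranges of $\cC^{1/2}$ and $\CC^{1/2}$ are dense in $L_2(\cX)$ by their injectivity), this norm equivalence reduces to a two-sided inequality of the form $k\norm{v}{H}^2 \leq \norm{v}{\widetilde H}^2 \leq K\norm{v}{H}^2$ on $L_2(\cX)$. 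By~\eqref{eq:Hf-H-ip} and its $\widetilde f$-analogue, after passing to $\hat v:=\cF_\cX v\in\cF_\cX(L_2(\cX))$ this reads as exactly~\eqref{eq:spectral-density-i}, and by density of $\cF_\cX(L_2(\cX))$ in $H_f$ and $H_{\widetilde f}$ nothing is lost in the passage.

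Under Assumption~\ref{ass:kriging}.I (equivalently, \ref{prop:spectral-density-i}), Proposition~\ref{prop:AssI}\ref{prop:AssI-ii} gives that $B := \CC^{1/2}\cC^{-1/2}$ is a bounded isomorphism on $L_2(\cX)$, so that $\cC^{-1/2}\CC\cC^{-1/2}=B^*B$ is a bounded positive self-adjoint operator on $L_2(\cX)$ and $T_a\in\cL(L_2(\cX))$ is self-adjoint; correspondingly, \eqref{eq:spectral-density-i} ensures that $S$ defined by~\eqref{eq:spectral-density-ii} is a bounded positive self-adjoint operator on $H_f$ and $\widehat T_a$ is a bounded self-adjoint operator on $H_f$. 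The plan for Step~2 is then to define the composite isometric isomorphism
\[
\Phi := \cF_\cX \circ \cC^{-1/2} \from L_2(\cX) \to H_f
\]
and prove the operator identity $\Phi T_a = \widehat T_a \Phi$, showing that $T_a$ and $\widehat T_a$ are unitarily equivalent. Since compactness is preserved under unitary equivalence, this yields the equivalence of Assumption~\ref{ass:kriging}.III with~\ref{prop:spectral-density-ii} with the same constant $a\in(0,\infty)$. By self-adjointness and polarization, the operator identity reduces to matching quadratic forms; for $u\in L_2(\cX)$ one rewrites
\[
\scalar{T_a u, u}{L_2(\cX)} = \norm{Bu}{L_2(\cX)}^2 - a\norm{u}{L_2(\cX)}^2 = \norm{\cC^{-1/2}u}{\widetilde H}^2 - a\norm{\Phi u}{H_f}^2,
\]
and applies the $\widetilde f$-analogue of~\eqref{eq:Hf-H-ip} to identify the first term with $\scalar{S\Phi u, \Phi u}{H_f}$.

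The main subtlety will be justifying $\norm{Bu}{L_2(\cX)}^2 = \norm{\cC^{-1/2}u}{\widetilde H}^2$ in the display above, because $\cC^{-1/2}u$ typically does not belong to $L_2(\cX)$, so $\scalar{\CC\cC^{-1/2}u,\cC^{-1/2}u}{L_2(\cX)}$ cannot be read literally as an $L_2$-pairing. Under Assumption~\ref{ass:kriging}.I one has $H=\widetilde H$, so I can write $\cC^{-1/2}u = \CC^{-1/2}\tilde u$ for a unique $\tilde u\in L_2(\cX)$; both sides of the claimed identity then evaluate to $\norm{\tilde u}{L_2(\cX)}^2$ (the right-hand side by isometry of $\CC^{-1/2}\from L_2(\cX)\to\widetilde H$, the left-hand side by the cancellation $\CC^{-1/2}\CC^{1/2} = \cI_{L_2(\cX)}$), which closes the argument.
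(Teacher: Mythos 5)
Your proposal is correct and follows essentially the same route as the paper: part \ref{prop:spectral-density-i} is obtained by transporting the quadratic forms $\scalar{\cC v,v}{L_2(\cX)}$ and $\scalar{\CC v,v}{L_2(\cX)}$ through the Fourier isometry and invoking Proposition~\ref{prop:AssI}, and part \ref{prop:spectral-density-ii} rests on the same conjugation identity (the paper writes it as $T_a=\cC^{1/2}\cF_\cX^{-1}\widehat{T}_a\cF_\cX\cC^{-1/2}$, i.e.\ your $\Phi T_a=\widehat{T}_a\Phi$), verified via forms and density. The only cosmetic difference is that the paper checks the bilinear-form identity on the dense subspace $L_2(\cX)\times L_2(\cX)$, where $\scalar{(\CC-a\cC)v_1,v_2}{L_2(\cX)}$ is literal, and then extends by continuity, which sidesteps the $\norm{Bu}{L_2(\cX)}^2=\norm{\cC^{-1/2}u}{\widetilde{H}}^2$ subtlety that you correctly resolve by hand.
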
 

\begin{proof} 
\ref{prop:spectral-density-i}
Let $\hat{v}_1, \hat{v}_2 \in \cF_\cX(L_2(\cX))$ 
and $v_1, v_2\in L_2(\cX)$ be such that $\hat{v}_1 = \cF_\cX v_1$ 
and $\hat{v}_2 = \cF_\cX v_2$.  
Applying the inversion formula \eqref{eq:rho0-f} 
for $\widetilde{f}$ gives, 
similarly as in~\eqref{eq:Hf-H-ip}, 
\begin{equation}\label{eq:Ctilde-ftilde}  
	\scalar{\CC v_1, v_2}{L_2(\cX)} 
	= 
	\int_{\bbR^d} 
	\frac{( \cF \widetilde{\varrho}_0 )(\omega )}{(2\pi)^d} 
	\hat{v}_1(\omega) 
	\overline{\hat{v}_2 (\omega)}  \, \rd \omega 
	=
	\int_{\bbR^d} \widetilde{f}(\omega ) \hat{v}_1(\omega) 
	\overline{\hat{v}_2 (\omega)}  \, \rd \omega . 
\end{equation} 	
Therefore, 
\eqref{eq:spectral-density-i} is equivalent to 
$k \scalar{\cC v, v}{L_2(\cX)}
\leq 
\scalar{\CC v,v}{L_2(\cX)}
\leq 
K \scalar{\cC v, v}{L_2(\cX)}$
holding for all $v\in L_2(\cX)$, 
which by density of $L_2(\cX)$ in $H$ can be reformulated as 
the relation  
$\norm{ \CC^{1/2}\cC^{-1/2} w}{L_2(\cX)}^2 \in[k, K]$ 
for all $w\in L_2(\cX)$ with $\norm{w}{L_2(\cX)}=1$, i.e., 
$\CC^{1/2}\cC^{-1/2}$ is an isomorphism on $L_2(\cX)$.  
By Proposition~\ref{prop:AssI}\ref{prop:AssI-i}$\,\Leftrightarrow\,$\ref{prop:AssI-ii}
this is equivalent to Assumption~\ref{ass:kriging}.I. 

\ref{prop:spectral-density-ii} 
We proceed as illustrated below: We prove that 
$T_a$ can be expressed as the composition 
$T_a=\cC^{1/2} \cF_\cX^{-1} \widehat{T}_a \cF_\cX \cC^{-1/2}$. 
Since $\cC^{-1/2}\from L_2(\cX) \to H$ and 
$\cF_\cX\from H \to H_f$ are inner product preserving 
isometric isomorphisms, this shows that  
$T_a\in\cK( L_2(\cX) )$ is equivalent to compactness 
of $\widehat{T}_a$ on $H_f$. 

\begin{center}
	{\footnotesize   
		\begin{tikzpicture}[scale=1] 
		\node (L2l) at (0, 1.5) {$\bigl( L_2(\cX), \scalar{\,\cdot\,, \,\cdot\,}{L_2(\cX)}\bigr)$};
		\node (L2r) at (7, 1.5) {$\bigl( L_2(\cX), \scalar{\,\cdot\,, \,\cdot\,}{L_2(\cX)}\bigr)$};
		\node (Hl) at (0, 0) {$\bigl( H , \scalar{\cC \,\cdot\,, \,\cdot\,}{L_2(\cX)} \bigr)$};
		\node (Hr) at (7, 0) {$\bigl( H , \scalar{\cC \,\cdot\,, \,\cdot\,}{L_2(\cX)} \bigr)$}; 
		\node (FHl) at (0, -1.5) {$\bigl( H_f , \scalar{\,\cdot\,, \,\cdot\,}{H_f} \bigr)$};
		\node (FHr) at (7, -1.5) {$\bigl( H_f , \scalar{\,\cdot\,, \,\cdot\,}{H_f} \bigr)$};
		
		\draw[->, thick] (L2l) -- (L2r) node[midway,above] {$T_a=\cC^{-1/2}\CC\cC^{-1/2} - a \cI$}; 
		\draw[->, thick] (Hl) -- (Hr) node[midway,above] {$\cC^{-1}\CC - a \cI_{H}$}; 
		\draw[->, thick] (FHl) -- (FHr) node[midway,above] {$\widehat{T}_a = S - a \cI_{H_f}$};  
		\draw[->, thick] (L2l) -- (Hl) node[midway,left] {$\cC^{-1/2}$}; 
		\draw[->, thick] (Hr) -- (L2r) node[midway,right] {$\cC^{1/2}$}; 
		\draw[->, thick] (Hl) -- (FHl) node[midway,left] {$\cF \circ E_\cX^0$}; 
		\draw[->, thick] (FHr) -- (Hr) node[midway,right] {$R_\cX \circ \cF^{-1}$}; 
		\end{tikzpicture}} 
\end{center}

\noindent Part \ref{prop:spectral-density-i} implies that 
$\mathfrak{b}(\hat{v}_1, \hat{v}_2) 
:= 
\int_{\bbR^d} 
\widetilde{f}(\omega) \hat{v}_1(\omega) \overline{\hat{v}_2(\omega)} 
\, \rd \omega$ 
defines a continuous, coercive bilinear form 
on the real Hilbert space $H_f$. 
Thus, for every $\hat{v}_1\in H_f$, 
existence and uniqueness of $S \hat{v}_1$ 
satisfying \eqref{eq:spectral-density-ii} 
follows from the Riesz representation theorem, 
and $S\from H_f\to H_f$ is well-defined, linear and bounded. 
For $v_1,v_2\in L_2(\cX)$ and  
$\hat{v}_1 := \cF_\cX v_1$, $\hat{v}_2 := \cF_\cX v_2$, 
we have   
\begin{align*} 
	\scalar{ (\cC^{-1}\CC - a\cI_H) v_1,  v_2}{H} 
	&= 
	\scalar{(\CC-a\cC) v_1, v_2}{L_2(\cX)} 
	= 
	\int_{\bbR^d} (\widetilde{f}(\omega)  - a f(\omega)) 
	\hat{v}_1(\omega) \overline{\hat{v}_2(\omega)} \, \rd \omega  
	\\ 
	&=  
	\scalar{ (S - a \cI_{H_f}) \hat{v}_1, \hat{v}_2}{H_f}
	= 
	\scalar{ \widehat{T}_a \hat{v}_1, \hat{v}_2}{H_f} 
	= 
	\scalar{ \cF_\cX^{-1} \widehat{T}_a \cF_\cX v_1, v_2}{H}, 
\end{align*} 
where we used \eqref{eq:def:Hf}, 
\eqref{eq:Hf-H-ip} and \eqref{eq:Ctilde-ftilde}. 
By density of $L_2(\cX)$ in $H$ and continuity 
of $\cF_\cX^{-1} \widehat{T}_a \cF_\cX$ on $H$, 
this equality holds also for all 
$v_1,v_2\in H$. 
Consequently, we obtain the chain of identities  
$T_a = \cC^{-1/2} \CC \cC^{-1/2} - a\cI 
= \cC^{1/2} \bigl(  \cC^{-1} \CC - a \cI_H \bigr)\cC^{-1/2} 
= \cC^{1/2} \cF_\cX^{-1}\widehat{T}_a\cF_\cX \cC^{-1/2}$. 
\end{proof} 

\begin{remark}
We emphasize that the Hilbert spaces $\cH^0$ in \eqref{eq:def:cH0}, 
$H$ from Lemma~\ref{lem:cH-ONB}\ref{lem:cH-ONB-i} 
and $H_f$ in \eqref{eq:def:Hf}  
are mutually isomorphic, with inner product preserving isomorphisms 
$\cJ \from H \to \cH^0$ and $\cF_\cX\from H \to H_f$. 
\end{remark} 

For two continuous functions 
$g, \widetilde{g} \from \bbR^d \to [0,\infty)$, 
the notation $g \asymp \widetilde{g}$ indicates that 
there exist $k,K\in(0,\infty)$ such that 
the relations 
$k g(\omega) \leq \widetilde{g}(\omega) \leq K g(\omega)$ 
hold for all $\omega\in\bbR^d$. 

\begin{corollary}\label{cor:spectral-density-I} 
Suppose the setting of Proposition~\ref{prop:spectral-density}.
\begin{enumerate}[label={\normalfont(\roman*')}, labelsep=6mm, topsep=4pt]
	\item\label{cor:spectral-density-I-pos}\hspace*{-3.1mm} 
	Assumption~\ref{ass:kriging}.I is satisfied 
	whenever $f\asymp \widetilde{f}$.  
	\item\label{cor:spectral-density-I-neg}\hspace*{-4.5mm}    
	Suppose that  
	$\varrho_0\from\bbR^d\to \bbR$ 
	related to $f\from\bbR^d\to[0,\infty)$ 
	via \eqref{eq:rho0-f} 
	is not infinitely differentiable 
	in at least one 
	Cartesian coordinate direction.  
	Then, in either of the cases 
	$\tfrac{ \widetilde{f}(\omega) }{ f(\omega) } \rightarrow 0$ 
	or 
	$\tfrac{\widetilde{f}(\omega)}{f(\omega)} \rightarrow \infty$ 
	as $\norm{\omega}{\bbR^d}\to\infty$,  
	Assumption~\ref{ass:kriging}.I is not satisfied. 
\end{enumerate}
\end{corollary}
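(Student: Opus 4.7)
Both claims reduce to checking Proposition~\ref{prop:spectral-density}\ref{prop:spectral-density-i}, which recasts Assumption~\ref{ass:kriging}.I as the weighted $L_2$-equivalence \eqref{eq:spectral-density-i} on $\cF_\cX(L_2(\cX))$. Part~\ref{cor:spectral-density-I-pos} is immediate: integrating the pointwise sandwich $k f(\omega) \le \widetilde{f}(\omega) \le K f(\omega)$ against $|\hat v(\omega)|^2$ yields exactly \eqref{eq:spectral-density-i}, hence Assumption~\ref{ass:kriging}.I.

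For part~\ref{cor:spectral-density-I-neg}, by the symmetry of \eqref{eq:spectral-density-i} in $f$ and $\widetilde f$ one may reduce to the case $\widetilde{f}(\omega)/f(\omega) \to 0$ as $\norm{\omega}{\bbR^d}\to\infty$. To refute \eqref{eq:spectral-density-i}, I use a modulation construction. Let $i\in\{1,\ldots,d\}$ be an index in whose Cartesian direction $\varrho_0$ fails to be $C^\infty$, denote the corresponding unit vector by $\mathbf{e}_i$, fix a real bump $\phi\in C_c^\infty(\bbR^d)$ supported in the interior of $\cX$ with $\phi\not\equiv 0$ (possible because $\cX$ has positive Lebesgue measure), and set $v_n(x):=\phi(x)\cos(n x_i)\in L_2(\cX)$. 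A direct computation gives
\[
\hat v_n(\omega) = \tfrac12\bigl[\hat\phi(\omega - n\mathbf{e}_i) + \hat\phi(\omega + n\mathbf{e}_i)\bigr],
\]
and since $\hat\phi$ is Schwartz while $f,\widetilde f$ are even (inherited from $\varrho_0,\widetilde\varrho_0$) and bounded, the cross terms in $|\hat v_n|^2$ contribute only lower-order corrections; hence for $g\in\{f,\widetilde f\}$,
\[
I_n(g) := \int_{\bbR^d} g(\omega)\,|\hat v_n(\omega)|^2\,\rd\omega
= \tfrac12\int_{\bbR^d} g(\omega + n\mathbf{e}_i)\,|\hat\phi(\omega)|^2\,\rd\omega + o\bigl(I_n(f)\bigr).
\]

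The target is to show $I_n(\widetilde f)/I_n(f)\to 0$ (possibly along a subsequence), which contradicts \eqref{eq:spectral-density-i} and refutes Assumption~\ref{ass:kriging}.I. I would split each remaining integral over a ball $\{\norm{\omega}{\bbR^d}\le R_n\}$ and its complement. On the ball, the hypothesis $\widetilde f/f\to 0$ at infinity forces $\widetilde f(\omega + n\mathbf{e}_i)/f(\omega + n\mathbf{e}_i)\to 0$ uniformly in $\omega$ as $n\to\infty$, provided $n-R_n\to\infty$. On the complement, both integrals are controlled by the $L_\infty$-bounds on $f,\widetilde f$ (valid since $\varrho_0,\widetilde\varrho_0\in L_1(\bbR^d)$) times the rapidly decaying tail $\int_{\norm{\omega}{\bbR^d}>R_n}|\hat\phi(\omega)|^2\,\rd\omega$.

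The main obstacle is producing a lower bound on $I_n(f)$ strong enough to dominate this tail. If $\varrho_0$ were $C^\infty$ in the $i$-th coordinate direction, $f$ could decay faster than any polynomial along that axis and $I_n(f)$ might shrink super-polynomially, breaking the splitting. The non-smoothness hypothesis precisely rules this out: failure of $C^\infty$-regularity of $\varrho_0$ in direction $i$ forbids such superpolynomial decay of $f$ in the corresponding Fourier direction (classically reflected by $\omega_i^N f\notin L_1(\bbR^d)$ for some $N\in\bbN$), which in turn yields, at least along a subsequence $n_k\to\infty$, a polynomial lower bound $I_{n_k}(f)\gtrsim n_k^{-N}$. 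It then suffices to let $R_{n_k}$ grow as a small fractional power of $n_k$ so that the tail is negligible while $n_k-R_{n_k}\to\infty$; the resulting sequence $\{v_{n_k}\}_{k\in\bbN}$ violates \eqref{eq:spectral-density-i} and thereby Assumption~\ref{ass:kriging}.I.
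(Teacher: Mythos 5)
Part~\ref{cor:spectral-density-I-pos} of your argument is correct and identical to the paper's: integrate the pointwise bounds against $|\hat v|^2$ and invoke Proposition~\ref{prop:spectral-density}\ref{prop:spectral-density-i}. The problem is in part~\ref{cor:spectral-density-I-neg}, and it is exactly the step you flag as "the main obstacle": the claimed polynomial lower bound $I_{n_k}(f)\gtrsim n_k^{-N}$ does \emph{not} follow from the non-smoothness hypothesis. What non-smoothness gives you (and what the paper extracts) is the divergence of a \emph{marginal} moment, $\int_{\bbR} f_j(\omega_j)\,\omega_j^{2p}\,\mathrm{d}\omega_j=\infty$ with $f_j$ obtained by integrating $f$ over the remaining $d-1$ coordinates. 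This says nothing about where in $\bbR^d$ the offending mass sits relative to the $\omega_j$-axis. Your modulated bump $v_n=\phi(\,\cdot\,)\cos(n x_j)$ has $|\hat v_n|^2$ concentrated (up to rapidly decaying tails) near the two points $\pm n\mathbf{e}_j$ \emph{on the axis}, so $I_n(f)$ only samples $f$ there. For $d\geq 2$ take, say, $f=f_{\mathrm{diag}}+f_{\mathrm{smooth}}$ where $f_{\mathrm{smooth}}$ is Gaussian-like and $f_{\mathrm{diag}}(\omega)\asymp(1+\norm{\omega}{\bbR^d})^{-(2+1/2)}$ on a unit-width neighborhood of the diagonal $\{\omega_1=\dots=\omega_d\}$ and vanishes elsewhere: then $\int \omega_1^{2}f\,\mathrm{d}\omega=\infty$, yet every point of $\operatorname{supp}(f_{\mathrm{diag}})$ lies at distance $\gtrsim n$ from $n\mathbf{e}_1$, so $I_n(f)$ decays faster than any polynomial in $n$. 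Your tail estimate on $\{\norm{\omega}{\bbR^d}>R_n\}$ only produces a bound $C_M R_n^{-M}$, which cannot be made $o(I_n(f))$ when $I_n(f)$ is superpolynomially small; the same defect kills your treatment of the cross terms. (Even for $d=1$ you would additionally need a pigeonhole argument to place the modulation frequency inside a dyadic shell that actually carries the divergent mass.)

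The paper's construction is designed precisely to avoid this localization problem. It takes $v_n$ to be a dilated $p$-th forward difference of a product of indicator functions, so that
\[
|\hat v_n(\omega)|^2=\Bigl[2n\sin\bigl(\tfrac{\omega_j}{2n}\bigr)\Bigr]^{2p}\prod_{k=1}^d\sinc^2\bigl(\tfrac{\omega_k}{2\pi n}\bigr),
\]
which is bounded above by $\omega_j^{2p}$ everywhere and below by a constant times $\omega_j^{2p}$ on the entire expanding box $(-n\pi,n\pi)^d$. Consequently the contribution of the fixed strip $\{|\omega_j|\leq M_\ell\}$ to both weighted norms stays bounded by $M_\ell^{2p}\varrho_0(0)$ (resp.\ $M_\ell^{2p}\widetilde\varrho_0(0)$), while the contribution of the complement to $\norm{\hat v_n}{H_f}^2$ is forced to infinity by the divergent moment — no lower bound at a single frequency is ever needed. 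To repair your proof you would have to replace the single modulation by a test function whose Fourier mass covers a growing region comparably to $\omega_j^{2p}$, which is essentially the paper's argument.
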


\begin{proof} 
\ref{cor:spectral-density-I-pos} 
Clearly, the relation $f\asymp \widetilde{f}$ in \ref{cor:spectral-density-I-pos} 
implies that \eqref{eq:spectral-density-i} holds. 
Therefore, 
by Proposition~\ref{prop:spectral-density}\ref{prop:spectral-density-i} 
Assumption~\ref{ass:kriging}.I is satisfied. 

\ref{cor:spectral-density-I-neg}  
Without loss of generality, we may assume that $\cX$ 
contains an open subset containing the origin and 
pick $L\in(0,\infty)$ with ${[-L^{-1}, L^{-1}]^d \subseteq \cX}$. 
By assumption on the differentiability of $\varrho_0$, 
there exist $j \in \{1,\ldots,d\}$ and $p\in\bbN$ such that 
$\int_{\bbR} f_j(\omega_j) \omega_j^{2p}  \,\rd \omega_j 
= \infty$, 
where 
\[ 
f_j\from\bbR\to[0,\infty), 
\;\;\, 
f_j(\omega_j) 
:= 
\int_{\bbR^{d-1}}  
f(\omega_1, \ldots, \omega_j, \ldots, \omega_d) \, 
\rd \lambda_{d-1}(\omega_1,\ldots, \omega_{j-1}, \omega_{j+1},\ldots, \omega_d). 
\]
Let $r(\,\cdot\,)$ be the rectangular function, 
defined as $r(x) = 1$ for $|x| \leq \tfrac{1}{2}$ 
and $r(x) = 0$ for~$|x| > \tfrac{1}{2}$. 
For $n\in\bbN$ such that $n>pL$, 
consider the forward difference operator 
$\Delta_n\from L_2(\bbR) \to L_2(\bbR)$, 
$\Delta_n g(x) := n(g(x+1/n)-g(x))$, 
and set  
\[
\textstyle 
v_n(x) 
:= 
n^d 
\Delta_n^p \, r\bigl(n x_j - \tfrac{1}{2} \bigr) 
\prod\limits_{k \neq j} r\bigl( n x_k - \tfrac{1}{2} \bigr), 
\qquad 
n\in\bbN, 
\quad 
n > pL. 
\] 
Each function $v_n$ 
has compact support 
in $[-L^{-1}, L^{-1}]^d \subseteq \cX$ 
and $v_n\in L_2(\cX)$. 
Furthermore, its Fourier transform is
$\hat{v}_n(\omega) 
:= 
( \cF_\cX v_n )(\omega)
= 
n^p \bigl( e^{i\omega_j/n} - 1 \bigr)^p 
\prod_{k=1}^d e^{-i\omega_k/2}\sinc\bigl( \frac{\omega_k}{2\pi n} \bigr)$, 
where $\sinc(x) := \frac{\sin(\pi x)}{\pi x}$ for all $x\in\bbR$, 
and by basic trigonometric identities 
\begin{equation}\label{eq:cor:spectral-density-I-neg} 
	\textstyle 
	|\hat{v}_n(\omega)|^2 
	= 
	\left[ 2n \sin\bigl(\frac{\omega_j}{2n}\bigr) \right]^{2p}
	\prod\limits_{k=1}^d  \sinc^2 \bigl( \frac{\omega_k}{2\pi n} \bigr) 
\end{equation} 
follows. 
Fix $\ell\in\bbN$. 
By assumption there exists a constant $M_\ell \in (0,\infty)$ 
such that one of the following holds 
for all $\omega$ with $\|\omega\|_{\bbR^d} > M_\ell$: 
\textbf{a)} 
$\widetilde{f}(\omega) < \tfrac{1}{2\ell} f(\omega)$ 
or \textbf{b)} 
$\widetilde{f}(\omega) > 2\ell \, f(\omega)$. 
Next, define the infinite strip 
$A_\ell := \{ \omega\in\bbR^d : |\omega_j| \leq M_\ell \}$. 
Then, for every $n > pL$,  
we find by \eqref{eq:cor:spectral-density-I-neg} 
\[ 
\int_{A_\ell} f(\omega) |\hat{v}_n(\omega)|^2  \, \rd \omega  
\leq 
\int_{A_\ell}  
f(\omega)
\omega_j^{2p} \,\rd \omega 
\leq 
M_\ell^{2p} \int_{\bbR^d} 	f(\omega) \,\rd \omega  
= 
M_\ell^{2p} \varrho_0(0), 
\]
since 
$2n\sin\bigl( \frac{\omega_j}{2n} \bigr) 
=
\sinc\bigl( \frac{\omega_j}{2\pi n} \bigr) 
\omega_j$ 
and $|\sinc(x)|\leq 1$ for all $x\in\bbR$. 
By the same arguments, 
$\int_{A_\ell} \widetilde{f}(\omega) |\hat{v}_n(\omega)|^2 \, \rd \omega 
\leq M_\ell^{2p} \widetilde{\varrho}_0(0)$ 
holds for all $n > pL$. 
In addition, for every 
$n\in\bbN$ with $n > \max\{ pL, M_\ell/\pi \}$, define the set 
\[ 
B_{\ell}^{n}  
:= 
\bigl\{ \omega \in \bbR^d : 
M_\ell < |\omega_j| < n \pi, \
|\omega_k| < n \pi \;\; \forall k\neq j \bigr\} 
\subset 
A_\ell^c 
:= 
\bbR^d\setminus A_\ell. 
\] 
Since
$\sinc^2(\theta/(2\pi n)) > (2/\pi)^2$ 
for $\theta\in(- \pi n, \pi n)$, 
we obtain again by~\eqref{eq:cor:spectral-density-I-neg} that 
\begin{align*}
	\int_{B_\ell^n} f(\omega)  |\hat{v}_n(\omega)|^2  \, \rd \omega 
	>  
	\frac{ 4^{d} }{ \pi^{2d} }  
	\int_{B_\ell^n} f(\omega)  
	\left[2n \sin\left(\frac{\omega_j}{2n}\right) \right]^{2p} 
	\, \rd \omega 
	>  
	\frac{ 4^{d+p} }{ \pi^{2(d+p)} }  
	\int_{B_\ell^n} f(\omega) \omega_j^{2p} \, \rd \omega .   
\end{align*} 
Furthermore, note that 
$\lim_{n\to\infty} 
\int_{B_\ell^n} f(\omega) \omega_j^{2p} \, \rd \omega
=
\int_{A_\ell^c} f(\omega) \omega_j^{2p} \, \rd \omega 
=\infty$, 
since the integral   
$\int_{\bbR^d} f(\omega) \omega_j^{2p} \, \rd \omega 
=
\int_{\bbR} f_j(\omega_j) \omega_j^{2p} \, \rd \omega_j 
= \infty$ 
diverges and  
$\int_{A_\ell} f(\omega) \omega_j^{2p} \, \rd \omega 
\leq M_\ell^{2p} \varrho_0(0)$ is finite.
For this reason, there exists an integer
$n_0 = n_0(\ell) > \max\{ pL, M_\ell/\pi \}$ 
such that 
\[  
\int_{ A_\ell^c } 
f(\omega) 
|\hat{v}_n(\omega)|^2 \, \rd \omega  
\geq 
\int_{B_\ell^n} f(\omega)  |\hat{v}_n(\omega)|^2  \, \rd \omega 
> 
2\ell M_\ell^{2p} \max\{ \varrho_0(0), \widetilde{\varrho}_0(0)\} , 
\] 
for all $n\geq n_0$. 
We then obtain, for every $n\geq n_0$, 
in case \textbf{a)} the estimate 
\[
\frac{\int_{\bbR^d}\widetilde{f}(\omega)|\hat{v}_n(\omega)|^2\,\rd\omega}{ 
	\norm{\hat{v}_n}{H_f}^2}
\leq 
\frac{\int_{A_\ell} \widetilde{f}(\omega)|\hat{v}_n(\omega)|^2\,\rd\omega}{ 
	\int_{ A_\ell^c } f(\omega)|\hat{v}_n(\omega)|^2\,\rd\omega}  
+
\frac{
	\int_{ A_\ell^c } \widetilde{f}(\omega)|\hat{v}_n(\omega)|^2\,\rd\omega}{ 
	\int_{ A_\ell^c } f(\omega)|\hat{v}_n(\omega)|^2\,\rd\omega}  
< \ell^{-1}, 
\]
and in case \textbf{b)} we have, 
for all $n\geq n_0$,  
\begin{align*}
	&\frac{\int_{\bbR^d}\widetilde{f}(\omega)|\hat{v}_n(\omega)|^2\,\rd\omega}{ 
		\norm{\hat{v}_n}{H_f}^2}
	\geq
	\frac{\int_{ A_\ell^c }
		\widetilde{f}(\omega)|\hat{v}_n(\omega)|^2\,\rd\omega}{ 
		\int_{ A_\ell } f(\omega)|\hat{v}_n(\omega)|^2\,\rd\omega +
		\int_{ A_\ell^c } f(\omega)|\hat{v}_n(\omega)|^2\,\rd\omega}  
	\\
	&\qquad\qquad 
	=  
	\frac{\int_{ A_\ell^c }
		\widetilde{f}(\omega)|\hat{v}_n(\omega)|^2\,\rd\omega}{ 
		\int_{ A_\ell^c } f(\omega)|\hat{v}_n(\omega)|^2\,\rd\omega}  
	\Biggl( 	\frac{ \int_{A_\ell} f(\omega)|\hat{v}_n(\omega)|^2\,\rd\omega}{ 
		\int_{ A_\ell^c } f(\omega)|\hat{v}_n(\omega)|^2\,\rd\omega}  + 1 \Biggr)^{-1}
	\geq 
	2\ell  \bigl( \tfrac{1}{2\ell}+1 \bigr)^{-1}  
	\geq 
	\ell. 
\end{align*}
Since $\ell\in\bbN$ was arbitrary, in case \textbf{a)} 
there is no constant $k\in(0,\infty)$ 
such that the lower bound in \eqref{eq:spectral-density-i} holds 
and 
in case \textbf{b)} 
we cannot find $K\in(0,\infty)$ 
for the upper bound in \eqref{eq:spectral-density-i}.  
Thus, the result follows by 
Proposition~\ref{prop:spectral-density}\ref{prop:spectral-density-i}.  
\end{proof}

In what follows, we let $\cW_{\Pi_{\mathbf{R}}}$ denote the class 
of Fourier transforms $\cF v$ of square-integrable 
functions $v\in L_2(\bbR^d;\bbC)$ with support 
$\operatorname{supp}(v) \subseteq \Pi_{\mathbf{R}}$ 
inside the bounded parallelepiped  
$\Pi_{\mathbf{R}} = \{x\in \bbR^d : -R_j \leq x_j \leq R_j, \, j=1,\ldots,d \}$. 

\begin{corollary}\label{cor:spectral-density-III} 
Suppose the setting of Proposition~\ref{prop:spectral-density},  
$f\asymp \widetilde{f}$, 
and furthermore that there exists $\varphi_0\in\cW_{\Pi_{\mathbf{R}}}$ 
such that 
$f\asymp |\varphi_0|^2$. 
Then Assumptions~\ref{ass:kriging}.I and~\ref{ass:kriging}.III are satisfied 
whenever 
there exists a constant $a\in(0,\infty)$ 
such that 
$\frac{\widetilde{f}(\omega)}{f(\omega)} \to a$ as 
$\| \omega\|_{\bbR^d} \to \infty$. 
\end{corollary}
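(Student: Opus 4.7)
My plan is to verify Assumptions~\ref{ass:kriging}.I and~\ref{ass:kriging}.III via the equivalent characterizations in Proposition~\ref{prop:spectral-density}. Assumption~\ref{ass:kriging}.I is immediate from Corollary~\ref{cor:spectral-density-I}\ref{cor:spectral-density-I-pos}, since by hypothesis $f \asymp \widetilde{f}$. For Assumption~\ref{ass:kriging}.III, Proposition~\ref{prop:spectral-density}\ref{prop:spectral-density-ii} reduces the task to proving that $\widehat{T}_a = S - a \cI_{H_f}$ is compact on $H_f$. A direct calculation from the defining identity \eqref{eq:spectral-density-ii} shows that $\widehat{T}_a$ acts as pointwise multiplication by $\mu_a(\omega) := \widetilde{f}(\omega)/f(\omega) - a$; this multiplier is bounded on $\bbR^d$ (since $f \asymp \widetilde{f}$) and, by assumption, satisfies $\mu_a(\omega) \to 0$ as $\|\omega\|_{\bbR^d} \to \infty$.

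For each $R > 0$, I introduce the truncated operator $M_R \from H_f \to H_f$ given by multiplication by $\mu_a \mathbf{1}_{B_R}$, where $B_R := \{\omega \in \bbR^d : \|\omega\|_{\bbR^d} \leq R\}$. Then $\|\widehat{T}_a - M_R\|_{\cL(H_f)} \leq \sup_{\omega \notin B_R} |\mu_a(\omega)| \to 0$ as $R \to \infty$, so it suffices to prove each $M_R$ is compact, since compact operators form a closed subspace of $\cL(H_f)$. Given a bounded sequence $\{\hat{v}_n\} \subset H_f$, the density of $\cF_\cX(L_2(\cX))$ in $H_f$ together with boundedness of $M_R$ permits a standard approximation argument, reducing to the case $\hat{v}_n = \cF_\cX v_n$ with $v_n \in L_2(\cX)$ and $\sup_n \|\hat{v}_n\|_{H_f} < \infty$.

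Writing $\varphi_0 = \cF v_0$ with $\operatorname{supp}(v_0) \subseteq \Pi_{\mathbf{R}}$, the convolution theorem yields $\varphi_0 \hat{v}_n = \cF(v_0 * E_\cX^0 v_n)$ up to Fourier-convention constants. The functions $u_n := v_0 * E_\cX^0 v_n$ are supported in the fixed compact set $\Pi_{\mathbf{R}} + \cX$, and Plancherel combined with $f \asymp |\varphi_0|^2$ gives $\|u_n\|_{L_2(\bbR^d)} \lesssim \|\hat{v}_n\|_{H_f}$. Hence $\{u_n\}$ is uniformly bounded in $L_2$ with support in a common compact set, and by Cauchy--Schwarz also uniformly bounded in $L_1$. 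Differentiating under the integral, $|\partial^\alpha \cF u_n(\omega)| \leq \|x^\alpha u_n\|_{L_1} \lesssim \|u_n\|_{L_1}$, so the family $\{\varphi_0 \hat{v}_n\} = \{\cF u_n\}$ is uniformly bounded and equicontinuous on every compact subset of $\bbR^d$. Arzel\`a--Ascoli then furnishes a subsequence converging uniformly on $B_R$, hence in $L_2(B_R)$; invoking $f \asymp |\varphi_0|^2$ on $B_R$ once more translates this into Cauchy convergence of $\{\hat{v}_{n_k}\}$ in $L_2(B_R, f \, \rd\omega)$, and multiplying by the bounded multiplier $\mu_a \mathbf{1}_{B_R}$ produces Cauchy convergence of $\{M_R \hat{v}_{n_k}\}$ in $H_f$.

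The main technical obstacle I anticipate is the Arzel\`a--Ascoli step combined with the Fourier-decay approximation. This is precisely where the hypothesis $f \asymp |\varphi_0|^2$ with $\varphi_0 \in \cW_{\Pi_{\mathbf{R}}}$ is essential: it converts $H_f$-boundedness into a uniform compact-support property for the inverse Fourier transforms $u_n$, which is the sole mechanism unlocking compactness on any bounded frequency ball. Without such a factorization of $f$, multiplication by a multiplier that merely vanishes at infinity need not yield a compact operator on $H_f$; the compactness here stems entirely from the spatial compactness of $\Pi_{\mathbf{R}} + \cX$.
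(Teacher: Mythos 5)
Your proof is correct, and its overall architecture (establish Assumption I via Corollary~\ref{cor:spectral-density-I}\ref{cor:spectral-density-I-pos}, truncate $\widehat{T}_a$ to a ball in frequency space, show the truncation is compact, and let the truncation error tend to zero in operator norm using $\widetilde{f}/f\to a$) coincides with the paper's. Where you genuinely diverge is in the compactness of the truncated operator: the paper proves the stronger statement that $\widehat{T}_a^\ell$ is Hilbert--Schmidt on $H_{|\varphi_0|^2}$, summing $\|\widehat{T}_a^\ell\hat v_j\|^2$ over an orthonormal basis and invoking the Skorokhod--Yadrenko bound $|\varphi_0(\omega)|^2\sum_j|\hat v_j(\omega)|^2\le C_{\mathbf{R},\mathbf{K}}$, whereas you give a sequential-compactness argument: the Paley--Wiener hypothesis $\varphi_0\in\cW_{\Pi_{\mathbf{R}}}$ converts an $H_f$-bounded sequence into functions $u_n=v_0*E^0_\cX v_n$ with uniformly compact support and uniform $L_2$ (hence $L_1$) bounds, so that $\{\cF u_n\}$ is bounded and equicontinuous and Arzel\`a--Ascoli yields $L_2(B_R, f\,\rd\omega)$-convergence of a subsequence. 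Both routes exploit $f\asymp|\varphi_0|^2$ in the same essential way (spatial compact support is what unlocks compactness on each frequency ball); yours is more elementary and avoids the external lemma, while the paper's yields the quantitatively stronger Hilbert--Schmidt property. One point to phrase more carefully: $S$ (and hence $\widehat{T}_a$, $M_R$) is not literally pointwise multiplication by $\widetilde f/f-a$ on $H_f$, since that multiplication need not leave the closed subspace $H_f\subset L_2(\bbR^d,f\,\rd\omega;\bbC)$ invariant; it is the compression (Riesz representer) of that multiplication. This is harmless here because every estimate you actually use --- the bound $\|\widehat{T}_a-M_R\|_{\cL(H_f)}\le\sup_{\omega\notin B_R}|\mu_a(\omega)|$ and the Cauchy property $\|M_R\hat u\|_{H_f}\le\|\mu_a\|_\infty\bigl(\int_{B_R}f|\hat u|^2\,\rd\omega\bigr)^{1/2}$ --- follows directly from the defining bilinear form, but the identification should be stated as a compression rather than as an honest multiplication operator.
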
 

\begin{proof} 
By Corollary~\ref{cor:spectral-density-I}\ref{cor:spectral-density-I-pos}  
$f\asymp \widetilde{f}$ implies that Assumption~\ref{ass:kriging}.I 
holds. 

Next, recall the bounded linear operator $S\from H_f \to H_f$  
from \eqref{eq:spectral-density-ii} and, for $\ell\in\bbN$, 
define 
the self-adjoint linear 
operator $\widehat{T}_a^\ell\from H_f \to H_f$ 
similarly via 
\[ 
\bigl( \widehat{T}_a^\ell \hat{v}_1, \hat{v}_2 \bigr)_{H_f}  
=
\int_{B_\ell} 
\bigl( \widetilde{f}(\omega) - a f(\omega) \bigr) 
\hat{v}_1(\omega) \overline{\hat{v}_2(\omega)} \, \rd \omega 
\quad 
\forall \hat{v}_1, \hat{v}_2\in H_f, 
\]
where 
$B_\ell := \{ w\in\bbR^d: \| \omega\|_{\bbR^d} < \ell \}$ 
is the 
ball around the origin with radius~$\ell$. 
By Proposition~\ref{prop:spectral-density}\ref{prop:spectral-density-i} 
$\widehat{T}_a^\ell$ is bounded with 
$\norm{\widehat{T}_a^\ell}{\cL(H_f)}\leq a + \norm{S}{\cL(H_f)}$ 
for all $\ell\in\bbN$. 
We now proceed in two steps: 
We first show that, for every $\ell\in\bbN$, 
$\widehat{T}_a^\ell$ is compact on $H_f$. 
Secondly, 
we prove convergence  
$\lim_{\ell\to\infty} \| \widehat{T}_a^\ell - \widehat{T}_a \|_{\cL(H_f)} = 0$,  
which implies 
that $\widehat{T}_a = S - a\cI_{H_f}$ is compact on~$H_f$, 
since $\cK(H_f)$ is closed in $\cL(H_f)$. 
Then, 
Assumption~\ref{ass:kriging}.III 
holds 
by Proposition~\ref{prop:spectral-density}\ref{prop:spectral-density-ii}. 

By Proposition~\ref{prop:spectral-density}\ref{prop:spectral-density-i} 
$H_f$ and $H_{|\varphi_0|^2}$ are isomorphic 
and  
$c \| \hat{v} \|_{H_f}^2 
\leq  
\| \hat{v} \|_{H_{|\varphi_0|^2}}^2 
\leq  
C  \| \hat{v} \|_{H_f}^2$ 
for some constants $c,C\in(0,\infty)$ 
independent of $\hat{v}\in H_f$.  
For this reason, the operator 
$\widehat{T}_a^\ell$ is compact on $H_f$ 
if and only if  
it is compact on $H_{|\varphi_0|^2}$. 
To see that, for every $\ell\in\bbN$, the operator 
$\widehat{T}_a^\ell$ is compact on $H_{|\varphi_0|^2}$, 
we prove the stronger result that 
$\widehat{T}_a^\ell$ is Hilbert--Schmidt~on~$H_{|\varphi_0|^2}$. 
Let $\{ \hat{v}_j \}_{j\in\bbN}$ 
be an orthonormal basis 
for $H_{|\varphi_0|^2}$. 
Then, we note that the relations 
$f\asymp \widetilde{f}$ and $f\asymp |\varphi_0|^2$ 
imply equality of the supports,  
$\operatorname{supp}(\widetilde{f}) 
= 
\operatorname{supp}(f) 
= 
\operatorname{supp}(\varphi_0)$, 
and estimate 
\begin{align*} 
	\textstyle 
	C^{-1} 
	\sum\limits_{j\in\bbN} 
	\| \widehat{T}_a^\ell \hat{v}_j 
	&\textstyle  
	\|_{H_{|\varphi_0|^2}}^2 
	\leq   
	\sum\limits_{j\in\bbN} 
	\| \widehat{T}_a^\ell \hat{v}_j \|_{H_f}^2 
	=
	\sum\limits_{j \in\bbN} \, 
	\sup\limits_{ \| \hat{w} \|_{H_f}=1} 
	\bigl| \scalar{ \widehat{T}_a^\ell \hat{v}_j, \hat{w} }{H_f} \bigr|^2 
	\\
	&\textstyle  
	= 
	\sum\limits_{j \in\bbN} \, 
	\sup\limits_{ \| \hat{w} \|_{H_f}=1} 
	\Bigl| 
	\int\nolimits_{ \operatorname{supp}(\varphi_0) \cap B_\ell }   
	\frac{\widetilde{f}(\omega) - a f(\omega)}{|\varphi_0(\omega)|^2} 
	|\varphi_0(\omega)|^2 
	\hat{v}_j (\omega) \overline{\hat{w} (\omega)} \, \rd \omega \Bigr|^2 
	\\
	&\textstyle 
	\leq 
	\sum\limits_{j \in\bbN}  \, 
	\sup\limits_{ \| \hat{w} \|_{H_f}=1} 
	\| \hat{w} \|_{H_{|\varphi_0|^2}}^2 
	\int_{  \operatorname{supp}(\varphi_0) \cap B_\ell }   
	\frac{| \widetilde{f}(\omega) - a f(\omega) |^2}{|\varphi_0(\omega)|^4} 
	|\varphi_0(\omega)|^2 
	|\hat{v}_j (\omega)|^2 \, \rd \omega  
	\\
	&\textstyle 
	\leq 
	C \, 
	\Bigl| 
	\sup\limits_{\omega\in\operatorname{supp}(\varphi_0)} 
	\frac{ \widetilde{f}(\omega) + a f(\omega) }{|\varphi_0(\omega)|^2} 
	\Bigr|^2  
	\int_{ \operatorname{supp}(\varphi_0) \cap B_\ell } 
	|\varphi_0(\omega)|^2 \sum\limits_{j\in\bbN} |\hat{v}_j(\omega)|^2 \, \rd \omega. 
\end{align*} 
Since $f\asymp |\varphi_0|^2$ and $f\asymp \widetilde{f}$, 
the supremum in this bound is finite. 
Furthermore, as $\varphi_0\in\cW_{\Pi_{\mathbf{R}}}$ 
by \cite[][Lemma on p.~34]{Yadrenko1973} we obtain the bound 
\[
\textstyle 
|\varphi_0(\omega)|^{2}
\sum\limits_{j\in\bbN} |\hat{v}_j(\omega)|^2 
\leq 
C_{\mathbf{R},\mathbf{K}}, 
\quad 
\text{where} 
\quad 
C_{\mathbf{R},\mathbf{K}} 
:= 
\prod_{j=1}^d \frac{(R_j + K_j)}{\pi }, 
\]
and $\Pi_{\mathbf{K}}$ is a parallelepiped 
enclosing the compact set $\cX\subseteq\Pi_{\mathbf{K}}$.  
Consequently, 
\[ 
\textstyle 
\sum\limits_{j \in \bbN}  
\| \widehat{T}_a^\ell \hat{v}_j \|_{H_{|\varphi_0|^2}}^2
\leq 
C^2 
(2\ell)^d \,
C_{\mathbf{R},\mathbf{K}} \, 
\Bigl| 
\sup\limits_{\omega\in\operatorname{supp}(\varphi_0)} 
\frac{ \widetilde{f}(\omega) + a f(\omega) }{|\varphi_0(\omega)|^2} 
\Bigr|^2 <\infty, 
\]
i.e., for every $\ell\in\bbN$, 
the operator $\widehat{T}_a^\ell$ is Hilbert--Schmidt on $H_{|\varphi_0|^2}$ 
and, thus, compact on $H_f$.

Finally, let $\eps\in(0,\infty)$ and $\ell_\eps\in\bbN$ be such that 
$\sup\nolimits_{\omega\in B_\ell^c } 
\Bigl| \frac{\widetilde{f}(\omega)}{f(\omega)} - a \Bigr| < \eps$ 
for all $\ell\geq\ell_\eps$, where $B_\ell^c := \bbR^d\setminus B_\ell$. 
Then, for every $\ell\geq\ell_\eps$ and all $\hat{v}_1, \hat{v}_2 \in H_f$,  
\[ 
\textstyle 
\bigl( \bigl( \widehat{T}_a - \widehat{T}^\ell_a \bigr) \hat{v}_1, \hat{v}_2 \bigr)_{H_f} 
= 
\int_{B_\ell^c} 
\bigl( \widetilde{f}(\omega) - a f(\omega) \bigr) 
\hat{v}_1(\omega) \overline{\hat{v}_2(\omega)} \, \rd \omega 
\leq 
\eps 
\norm{\hat{v}_1}{H_f} \norm{\hat{v}_2}{H_f}. 
\] 
Thus, $\lim_{\ell\to\infty} \| \widehat{T}^\ell_a - \widehat{T}_a \|_{\cL(H_f)} = 0$ 
and $\widehat{T}_a$ is compact on $H_f$. 
\end{proof}

\section{Applications}
\label{section:applications}

In the following we exemplify  
the results of Section~\ref{section:results} 
and Section~\ref{section:simplified} 
by three specific applications. 
Corollary~\ref{cor:spectral-density-I} and 
Corollary~\ref{cor:spectral-density-III}
can be used to check for  
uniformly asymptotically optimal linear prediction
in the case of weakly stationary processes 
on compact subsets of $\bbR^d$, 
using their spectral densities. 
As an explicit example we consider the Mat\'ern 
covariance family in Section~\ref{subsec:matern}. 
Corollary~\ref{cor:equal_ef} is applicable, e.g., to 
periodic random fields on $\cX=[0,1]^d$ 
as considered by Stein~\cite{stein97}, 
see Section~\ref{subsec:periodic}.  
Moreover, as Theorem~\ref{thm:kriging_general} 
it also holds for random fields 
on more general domains. 
As a further illustration we consider 
an application on the sphere~$\cX=\bbS^2$ 
in Section~\ref{subsec:sphere}.  

\subsection{The Mat\'ern covariance family}
\label{subsec:matern}

The Mat\'ern covariance function 
$\varrho|_{\cX\times\cX}$ on $\cX\subset\bbR^d$  
with parameters $\sigma,\nu,\kappa\in(0,\infty)$, 
see Example~\ref{ex:cov-functions}\ref{ex:cov-euclid},  
has the spectral density 
\begin{equation}\label{eq:matern-spectral-density} 
f_{\mathsf{M}}(\omega) 
= 
\frac{1}{(2\pi)^d}
(\cF\varrho_{\mathsf{M}})(\omega)
=
\frac{ \Gamma(\nu + d/2) }{ \Gamma(\nu) \pi^{d/2} } 
\frac{ \sigma^2 \kappa^{2\nu} }
{ ( \kappa^2 + \| \omega \|_{\bbR^d}^2 )^{\nu + d/2} }, 
\qquad  
\omega\in\bbR^d, 
\end{equation} 
cf.~\cite[][Equation~(32) on p.~49]{stein99}. 
Assume that $\widetilde{\varrho}$ is a further Mat\'ern covariance function 
with parameters $\widetilde{\sigma},\widetilde{\nu},\widetilde{\kappa}\in(0,\infty)$  
and corresponding spectral density $\widetilde{f}_{\mathsf{M}}$. 
Since   
\[
\frac{\widetilde{f}_{\mathsf{M}}(\omega)}{f_{\mathsf{M}}(\omega)} 
= 
\frac{ \Gamma(\widetilde{\nu} + d/2) }{ \Gamma(\widetilde{\nu}) } 
\frac{ \Gamma(\nu) }{ \Gamma(\nu + d/2) } 
\frac{ \widetilde{\sigma}^2 \widetilde{\kappa}^{2\widetilde{\nu}} }{ 
\sigma^2 \kappa^{2\nu} } 
\frac{ ( \kappa^2 + \| \omega \|_{\bbR^d}^2 )^{\nu + d/2} }{ 
( \widetilde{\kappa}^2 + \| \omega \|_{\bbR^d}^2 )^{\widetilde{\nu} + d/2} } , 
\qquad 
\omega \in \bbR^d, 
\]
we conclude with Corollary~\ref{cor:spectral-density-I}\ref{cor:spectral-density-I-neg} 
that Assumption~\ref{ass:kriging}.I can only be satisfied if $\widetilde{\nu}=\nu$. 
In this case, $f_{\mathsf{M}} \asymp \widetilde{f}_{\mathsf{M}}$ and, 
since by \cite[][Remark~4.1]{Krasnitskii1990} 
also $f_{\mathsf{M}} \asymp |\varphi_0|^2$ holds 
for some $\varphi_0\in\cW_{\Pi_{\mathbf{R}}}$ 
and some parallelepiped $\mathbf{R}$, 
Corollary~\ref{cor:spectral-density-III} is applicable and shows that 
Assumptions~\ref{ass:kriging}.I and~\ref{ass:kriging}.III hold, 
with  
$a= 
\frac{ \widetilde{\sigma}^2 \widetilde{\kappa}^{2\nu} }{ 
\sigma^2 \kappa^{2\nu} }$ 
in~\eqref{eq:ass:Ta}. 
Thus, misspecifying the second order structure 
$(0, \varrho)$ by 
$(0,\widetilde{\varrho})$ yields  
uniformly asymptotically optimal linear prediction 
if and only if ${\nu = \widetilde{\nu}}$. 
For equivalence of the corresponding Gaussian measures, 
$a=1$ is necessary, i.e., 
the microergodic parameter $\sigma^2 \kappa^{2\nu}$ 
has to coincide for the two models. 
This is in accordance with the identifiability of this parameter 
under infill asymptotics, see \cite{Zhang2004}.

\subsection{Periodic random fields}
\label{subsec:periodic}

A stochastic process $\{\GP(x)\}_{x\in\cX}$ 
indexed by $\cX:=[0,1]^d$ 
is said to be weakly periodic 
if its mean value function $\pE[\GP]\equiv m$ 
is constant on $[0,1]^d$ 
and, in addition, its covariance function 
$\varrho(x,x')$ only depends on the difference 
$x - x'$, where the difference is taken modulo $1$ 
in each coordinate (see \cite{stein97}). 
Let  $\bbZ_+^d$ denote all elements $\kvec = (k_1,\ldots,k_d)^\top \in \bbZ^d$ 
such that at least one element in the vector is nonzero, 
and the first nonzero component is positive. 
A weakly periodic process 
admits the series expansion 
\[
\textstyle 
\GP(x)  
= 
X_0 
+ 
\sum\limits_{\kvec\in\bbZ_+^d} 
\left[ X^{\rm c}_{\kvec} \cos(2\pi \kvec \cdot x) 
+  X^{\rm s}_{\kvec}  \sin(2\pi \kvec \cdot x) \right], 
\]
where $X_0, X^{\rm c}_{\kvec}, X^{\rm s}_\kvec$ are pairwise 
uncorrelated random variables such that   
$\pE[ X_0 ]= \pE[ Z ] = m$ and, 
for all $\kvec\in\bbZ^d_+$, one has  
$\pE[ X^{\rm c}_{\kvec} ] = \pE[ X^{\rm s}_{\kvec} ] = 0$ 
as well as  
$\pE\bigl[ | X^{\rm c}_{\kvec} |^2 \bigr] = \pE\bigl[ | X^{\rm s}_{\kvec} |^2 \bigr]$. 
Define 
$f\from\bbZ^d\to[0,\infty)$ 
by $f(\mathbf{0}) := \pV[ X_0 ]$, 
$f(\kvec) := \frac{1}{2}\pV[ X^{\rm c}_{\kvec} ]$ for $\kvec\in\bbZ_+^d$, 
and $f(-\kvec) = f(\kvec)$. 
Then, we can represent the covariance function of $Z$ as 
\begin{align*} 
\varrho(x,x') 
&\textstyle = 
\sum\limits_{\kvec\in\bbZ^d} 
f(\kvec) \left[ \cos(2\pi \kvec \cdot x) \cos(2\pi \kvec \cdot x')
+ \sin(2\pi \kvec \cdot x) \sin(2\pi \kvec \cdot x') \right] 
\\
&\textstyle = 
\sum\limits_{\kvec\in\bbZ^d} f(\kvec) \cos(2\pi \kvec \cdot (x-x')) 
=: \varrho_0(x-x'). 
\end{align*} 
For this reason, $f$ can be viewed as the spectral density 
with respect to the counting measure on $\bbZ^d$. 
It is not difficult to show that the set 
\[ 
\bigl\{1, e^{\rm c}_{\kvec}, e^{\rm s}_{\kvec} : \kvec\in\bbZ_+^d \bigr\}, 
\qquad 
e^{\rm c}_{\kvec}(x) := \sqrt{2}\cos(2\pi \kvec \cdot x), 
\qquad
e^{\rm s}_{\kvec}(x) := \sqrt{2}\sin(2\pi \kvec\cdot x), 
\] 
forms an orthonormal basis for $L_2\bigl( [0,1]^d \bigr)$. 
Moreover, it 
is an eigenbasis of 
the covariance operator 
with kernel $\varrho$. 
Indeed, $\int_{\cX} \varrho(x,x') \, \rd x' = f(\mathbf{0})$ and 
\[ 
\forall \kvec\in\bbZ_+^d : 
\quad 
\int_{\cX} \varrho(x,x') e^{\iota}_{\kvec}(x') \, \rd x'  
=  
f(\kvec) e^{\iota}_{\kvec}(x),  
\quad 
\iota \in \{ \mathrm{c}, \mathrm{s} \}. 
\] 
Since $\varrho_0(0) = \sum_{\kvec\in \bbZ^d} f(\kvec) < \infty$  
and $f(\kvec)\geq 0$, it is clear that $f(\kvec)$ accumulates only at zero. 
Thus, for any two weakly periodic random fields on~$[0,1]^d$ 
with corresponding spectral densities  
$f, \widetilde{f}\from\bbZ^d\to[0,\infty)$ defined as above, 
we are in the setting of Corollary~\ref{cor:equal_ef}: 
Assumptions~\ref{ass:kriging}.I and \ref{ass:kriging}.III are satisfied  
if and only if $\widetilde{f}(\kvec)/f(\kvec) \rightarrow a$ for some $a\in (0,\infty)$ 
as $|\kvec|\rightarrow \infty$. 
This result holds without any further assumptions 
on the spectral densities, and can be viewed as a version of 
\cite[][p.~102, Theorem~10]{stein99} for periodic random fields. 

\subsection{Random fields on the sphere}
\label{subsec:sphere}

Due to the popularity of 
the Mat\'ern covariance family on $\bbR^d$ 
(see Example~\ref{ex:cov-functions}\ref{ex:cov-euclid} 
and Section~\ref{subsec:matern})
it is highly desirable to have  
a corresponding covariance model 
also on the sphere $\bbS^2$. 
A simple remedy for this is 
to define the covariance function 
as in Example~\ref{ex:cov-functions}\ref{ex:cov-manifold}, i.e., 
via the \emph{chordal distance}  
$d_{\bbR^3}(x,x') = \|x-x'\|_{\bbR^3}$. 
One reason for why this is a common choice 
is that the (more suitable) great circle distance  
$d_{\bbS^2}(x,x') = \arccos\bigl( \scalar{x, x'}{\bbR^3} \bigr)$   
results in a kernel $\varrho$ 
which is (strictly) positive definite only 
for $\nu \leq 1/2$, 
see Example~\ref{ex:cov-functions}\ref{ex:cov-sphere} 
and \cite{gneiting2013}. 
As this severely limits the flexibility 
of the model,  
several authors have suggested  
alternative ``Mat\'ern-like'' covariances 
on $\mathbb{S}^2$. \pagebreak

Guinness and Fuentes~\cite{Guinness2016} 
proposed the 
\emph{Legendre--Mat\'ern} covariance, 
\begin{equation}\label{eq:appl:varrho1}   
\varrho_1(x,x') 
:= 
\sum\limits_{\ell=0}^{\infty} 
\frac{\sigma_1^2}{(\kappa_1^2+ \ell^2)^{\nu_1+ 1/2}} 
P_\ell\bigl( \cos d_{\bbS^2}(x,x') \bigr),
\quad 
x, x' \in\bbS^2, 
\end{equation}
where $\sigma_1,\nu_1,\kappa_1\in(0,\infty)$ 
are model parameters and 
$P_\ell\from[-1,1]\to\bbR$ is the 
$\ell$-th Legendre polynomial, i.e., 
\[ 
\textstyle 
P_\ell(y) 
= 
2^{-\ell} 
\frac1{\ell!}
\frac{\rd^\ell}{\rd y^\ell} 
\left( y^2 -1 \right)^\ell, 
\quad 
y\in[-1,1], 
\qquad 
\ell\in\bbN_0:=\{0,1,2,\ldots\}. 
\]
This choice is motivated  
firstly by the Legendre polynomial representation 
of positive definite functions on $\bbS^2$ 
(see \cite{schoenberg1942}) and 
secondly by the fact that the spectral density 
$f_{\mathsf{M}}(\omega)$
for the Mat\'ern covariance on $\bbR^d$ 
is~proportional to 
$\sigma^2(\kappa^2 + \|\omega\|_{\bbR^d}^2)^{-(\nu + d/2)}$, 
see~\eqref{eq:matern-spectral-density}. 
However, note that the parameter $\sigma_1^2$ 
in \eqref{eq:appl:varrho1} is not the variance 
since 
$\varrho_1(x,x) 
= 
\sigma_1^2 \sum_{\ell= 0}^{\infty} (\kappa_1^2 + \ell^2)^{-\nu_1-1/2}$.

Another plausible way of defining a Mat\'ern model on $\bbS^2$  
is to use the stochastic partial differential equation (SPDE) 
representation of Gaussian Mat\'ern fields derived by 
Whittle~\cite{whittle63}, 
according to which a centered Gaussian Mat\'ern field 
$\bigl\{ \GP^0(x) : x\in \bbR^d \bigr\}$ can be viewed as a 
solution to the SPDE 
\begin{equation}\label{eq:spde}
(\kappa^2 - \Delta)^{(\nu + d/2)/2} \left(\tau \GP^0 \right) = \cW 
\quad 
\text{on}\;\; \bbR^d,  
\end{equation}
where the parameter $\tau\in(0,\infty)$ controls  
the variance of $\GP^0$, 
$\cW$ is Gaussian 
white noise, and 
$\Delta$ is the Laplacian. 
Lindgren, Rue and Lindstr\"om~\cite{lindgren11} 
proposed Gaussian Mat\'ern fields 
on the sphere as solutions to \eqref{eq:spde} 
formulated on $\bbS^2$ 
instead of $\bbR^d$. 
In this case, $\Delta$ is the Laplace--Beltrami operator. 

In order to state the corresponding 
covariance function 
$\varrho_2\from \bbS^2\times\bbS^2\to\bbR$, 
we introduce the spherical coordinates 
$(\vartheta,\varphi)\in[0,\pi]\times[0,2\pi)$ 
of a point $(x_1,x_2,x_3)^\trsp\in \bbR^3$ on $\bbS^2$  
by $\vartheta = \arccos(x_3)$ 
and $\varphi = \arccos\bigl( x_1 (x_1^2 + x_2^2)^{-1/2} \bigr)$. 
For all $\ell\in\bbN_0$ 
and $m \in\{ -\ell, \ldots, \ell\}$, we then  
define the (complex-valued) spherical harmonic 
$Y_{\ell,m} : \bbS^2 \to \bbC$  
as (see \cite[][p.~64]{marinucci2011}) 
\begin{align*}
\hspace*{2cm}   
Y_{\ell,m}(\vartheta, \varphi) 
&= 
C_{\ell,m}
P_{\ell,m}(\cos\vartheta) e^{im\varphi}, 
& m &\geq 0, 
\hspace*{2cm}  
\\
Y_{\ell,m}(\vartheta, \varphi) 
&=  
(-1)^m \overline{Y}_{\ell,-m}(\vartheta,\varphi), 
& m &< 0, 
\end{align*}
where, for $\ell\in\bbN_0$ and 
$m\in\{0,\ldots,\ell\}$, 
we set 
$C_{\ell,m} 
:=
\sqrt{\frac{2\ell + 1}{4\pi} \frac{(\ell - m)!}{(\ell + m)!}}$ 
and 
$P_{\ell,m}\from[-1,1]\to\bbR$ denotes   
the associated Legendre polynomial,    
given by 
\[
\textstyle 
P_{\ell,m}(y) 
= 
(-1)^m \left(1-y^2 \right)^{m/2} 
\frac{\rd^m}{\rd y^m} P_{\ell}(y), 
\quad 
y\in[-1,1].
\]
The spherical harmonics 
$\{Y_{\ell,m} :  \ell \in\bbN_0, \, m = -\ell,\ldots, \ell \}$ 
are eigenfunctions of 
the Laplace--Beltrami operator, 
with corresponding eigenvalues given by 
$\lambda_{\ell,m} = -\ell(\ell+1)$. 
In addition, they 
form an orthonormal basis of the complex-valued   
Lebesgue space 
$L_2\bigl( \bbS^2, \nu_{\bbS^2}; \bbC \bigr)$, 
see 
\cite[][Proposition 3.29]{marinucci2011}.  
Here, $\nu_{\bbS^2}$ denotes the 
Lebesgue measure on the sphere which,  
in spherical coordinates, 
can be expressed as 
$\rd\nu_{\bbS^2}(x) 
= 
\sin\vartheta \,\rd\vartheta \,\rd\varphi$. 

The covariance function of the solution 
$\GP^0$ to the SPDE \eqref{eq:spde} on $\bbS^2$ 
can thus be represented using 
the spherical harmonics 
via the series expansion 
(cf.~\cite[][Theorem 5.13 and p.~125]{marinucci2011}) 
\[ 
\varrho_2(x,x') 
= 
\sum_{\ell=0}^{\infty}  
\frac{\tau^{-2}}{(\kappa^2+ \ell(\ell+1))^{\nu + 1}} 
\sum_{m=-\ell}^{\ell} 
Y_{\ell,m}(\vartheta,\varphi) 
\overline{Y}_{\ell,m}(\vartheta',\varphi') , 
\] 
where $(\vartheta,\varphi)$, $(\vartheta',\varphi')$ 
are the spherical coordinates of $x$ and $x'$, respectively. 
Then, by expressing also the 
Legendre--Mat\'ern 
covariance function in \eqref{eq:appl:varrho1} 
in spherical coordinates and 
by using the addition formula for 
the spherical harmonics 
\cite[][Equation (3.42)]{marinucci2011}, 
we find that 
\begin{align*} 
\varrho_1(x,x')
&=
\sum_{\ell=0}^{\infty} 
\frac{\sigma_1^2}{(\kappa_1^2+ \ell^2)^{\nu_1+ 1/2}} 
P_\ell\bigl(\scalar{ x, x' }{\bbR^3} \bigr) 
\\
&= 
\sum_{\ell=0}^{\infty} 
\frac{\sigma_1^2}{(\kappa_1^2+ \ell^2)^{\nu_1+ 1/2}} 
\frac{4\pi}{2\ell + 1}
\sum_{m=-\ell}^\ell 
Y_{\ell,m}(\vartheta, \varphi) 
\overline{Y}_{\ell,m}(\vartheta', \varphi') . 
\end{align*} 
Thus, the covariance functions 
$\varrho_1,\varrho_2$ 
are similar, 
but not identical. 
Due to 
the SPDE representation of $\varrho_2$, 
we believe that this is the 
preferable model. 
However, an immediate question is now if  
the two models provide similar kriging predictions. 
The answer to this is given by Corollary~\ref{cor:equal_ef}: 
Since 
$\sum_{m=-\ell}^{\ell} 
Y_{\ell,m}(\vartheta,\varphi) 
\overline{Y}_{\ell,m}(\vartheta',\varphi') 
= 
\sum_{m=-\ell}^{\ell} 
v_{\ell,m}(\vartheta,\varphi) 
v_{\ell,m}(\vartheta',\varphi')$, 
where 
\[
v_{\ell,m}(\vartheta,\varphi)  
:= 
\begin{cases} 
\sqrt{2} \, C_{\ell,-m} P_{\ell,-m}(\cos\vartheta) \cos(m\varphi) 
& 
\text{if } m < 0 ,
\\
(1/\sqrt{4\pi}) P_\ell(\cos\vartheta) 
& 
\text{if } m = 0 ,
\\
\sqrt{2} \, C_{\ell,m} P_{\ell,m}(\cos\vartheta) \sin(m\varphi) 
& 
\text{if } m > 0 ,
\end{cases} 
\]
the two covariance operators have the same 
(orthonormal, real-valued) eigenfunctions 
in $L_2\bigl(\bbS^2,\nu_{\bbS^2};\bbR\bigr)$. 
Thus, we are in the setting of Corollary~\ref{cor:equal_ef} 
and consider the 
limit of the ratio of the corresponding eigenvalues:   
\[ 
\lim_{\ell\rightarrow\infty} 
\frac{(\kappa_1^2+ \ell^2)^{\nu_1 + 1/2}(2\ell+1)}{ 
(\kappa^2+ \ell(\ell+1))^{\nu + 1}} 
\frac1{\tau^2\sigma_1^24\pi}  
= \begin{cases}
0 		& \text{if $\nu_1 < \nu$}, \\
\infty  & \text{if $\nu_1 > \nu$}, \\
\frac1{\tau^2\sigma_1^2 2\pi} 
& \mbox{if $\nu_1 = \nu$}.
\end{cases}
\]
We conclude that the models will provide  
asymptotically equivalent kriging prediction  
as long as they have the same smoothness parameter $\nu$ 
(and positive, finite variance parameters). 
By the same reasoning, it is easy to see that 
one may misspecify both $\tau$ and $\kappa$ 
as well as $\sigma_1$ and $\kappa_1$
for the two covariance models and 
still obtain asymptotically optimal linear prediction.

\section{Discussion}
\label{section:discussion}

For statistical applications it is crucial 
to understand the effect that misspecifying 
the mean or the covariance function 
has on linear prediction. 
We have addressed this  
by providing three necessary and sufficient conditions, 
Assumptions~\ref{ass:kriging}.I--III, 
for uniformly asymptotically optimal linear prediction 
of random fields on compact metric spaces.   

There are several 
directions 
in which this work can be continued in the future. 
An interesting question is  
whether Assumptions~\ref{ass:kriging}.I--III 
can be relaxed if the uniformity requirement on the optimality is dropped. 
Furthermore, the results of Section~\ref{subsec:spectral-density} 
can likely be refined 
to obtain \emph{necessary and sufficient} conditions 
on the spectral densities $f$ and $\widetilde{f}$. 
This should be possible at least in the case 
that $f\asymp |\varphi_0|^2$ holds 
for some $\varphi_0\in \cW_{\Pi_{\mathbf{R}}}$. 

A more challenging problem would be to generalize our results 
to the setting of \emph{locally compact} spaces. 
This extension is conceivable, 
but it would require substantial changes 
to both the problem formulation and the methods of proving. 
For the current setting of compact metric spaces, 
there are several additional applications that can be considered. 
For example, the application to Gaussian Mat\'ern fields on the sphere 
in Section~\ref{subsec:sphere}
can easily be extended to SPDE-based 
Gaussian Mat\'ern fields on more general domains, 
since our arguments  
depend only on the asymptotic behavior 
of the eigenvalues of the Laplace--Beltrami operator 
which is also known, 
for instance, on compact Riemannian manifolds, 
see e.g.\ \citep[][Theorem 15.2]{Shubin2001}.


\begin{acks}[Acknowledgments]
The authors thank  
S.G.~Cox and 
J.M.A.M.~van Neerven   
for fruitful discussions on spectral 
theory which considerably 
contributed to the 
proof of Lemma~B.2, 
see Appendix~B in the Supplementary Material~\citep{kbsup}. 
In addition, we thank the reviewer 
and the editor for their valuable comments. 
\end{acks}

\begin{supplement}
\stitle{Supplement to ``Necessary and sufficient conditions for asymptotically optimal 
	linear prediction of random fields on compact metric spaces''}
\sdescription{Three appendices of the manuscript.}
\end{supplement}



\end{cbunit}


\begin{thebibliography}{28}
	
	\bibitem[\protect\citeauthoryear{Anderes, M{\o}ller and
		Rasmussen}{2020}]{Anderes2017}
	\begin{barticle}[author]
		\bauthor{\bsnm{Anderes},~\bfnm{Ethan}\binits{E.}},
		\bauthor{\bsnm{M{\o}ller},~\bfnm{Jesper}\binits{J.}} \AND
		\bauthor{\bsnm{Rasmussen},~\bfnm{Jakob~G.}\binits{J.~G.}}
		(\byear{2020}).
		\btitle{Isotropic covariance functions on graphs and their edges}.
		\bjournal{Ann. Statist.}
		\bvolume{48}
		\bpages{2478--2503}.
	\end{barticle}
	\endbibitem
	
	\bibitem[\protect\citeauthoryear{Arafat et~al.}{2018}]{Arafat2018}
	\begin{barticle}[author]
		\bauthor{\bsnm{Arafat},~\bfnm{Ahmed}\binits{A.}},
		\bauthor{\bsnm{Porcu},~\bfnm{Emilio}\binits{E.}},
		\bauthor{\bsnm{Bevilacqua},~\bfnm{Moreno}\binits{M.}} \AND
		\bauthor{\bsnm{Mateu},~\bfnm{Jorge}\binits{J.}}
		(\byear{2018}).
		\btitle{Equivalence and orthogonality of {G}aussian measures on spheres}.
		\bjournal{J. Multivariate Anal.}
		\bvolume{167}
		\bpages{306--318}.
	\end{barticle}
	\endbibitem
	
	\bibitem[\protect\citeauthoryear{Berlinet and
		Thomas-Agnan}{2004}]{Berlinet2004}
	\begin{bbook}[author]
		\bauthor{\bsnm{Berlinet},~\bfnm{Alain}\binits{A.}} \AND
		\bauthor{\bsnm{Thomas-Agnan},~\bfnm{Christine}\binits{C.}}
		(\byear{2004}).
		\btitle{{R}eproducing {K}ernel {H}ilbert {S}paces in {P}robability and
			{S}tatistics}.
		\bpublisher{Kluwer Academic Publishers, Boston, MA}.
	\end{bbook}
	\endbibitem
	
	\bibitem[\protect\citeauthoryear{Bevilacqua et~al.}{2019}]{Bevilacqua2019}
	\begin{barticle}[author]
		\bauthor{\bsnm{Bevilacqua},~\bfnm{Moreno}\binits{M.}},
		\bauthor{\bsnm{Faouzi},~\bfnm{Tarik}\binits{T.}},
		\bauthor{\bsnm{Furrer},~\bfnm{Reinhard}\binits{R.}} \AND
		\bauthor{\bsnm{Porcu},~\bfnm{Emilio}\binits{E.}}
		(\byear{2019}).
		\btitle{Estimation and prediction using generalized {W}endland covariance
			functions under fixed domain asymptotics}.
		\bjournal{Ann. Statist.}
		\bvolume{47}
		\bpages{828--856}.
	\end{barticle}
	\endbibitem
	
	\bibitem[\protect\citeauthoryear{Cleveland}{1971}]{CLEVELAND71}
	\begin{barticle}[author]
		\bauthor{\bsnm{Cleveland},~\bfnm{William~S.}\binits{W.~S.}}
		(\byear{1971}).
		\btitle{Projection with the wrong inner product and its application to
			regression with correlated errors and linear filtering of time series}.
		\bjournal{Ann.\ Math.\ Statist.}
		\bvolume{42}
		\bpages{616--624}.
	\end{barticle}
	\endbibitem
	
	
	\bibitem[\protect\citeauthoryear{Gneiting}{2013}]{gneiting2013}
	\begin{barticle}[author]
		\bauthor{\bsnm{Gneiting},~\bfnm{Tilmann}\binits{T.}}
		(\byear{2013}).
		\btitle{Strictly and non-strictly positive definite functions on spheres}.
		\bjournal{Bernoulli}
		\bvolume{19}
		\bpages{1327--1349}.
	\end{barticle}
	\endbibitem
	
	\bibitem[\protect\citeauthoryear{Guinness and Fuentes}{2016}]{Guinness2016}
	\begin{barticle}[author]
		\bauthor{\bsnm{Guinness},~\bfnm{Joseph}\binits{J.}} \AND
		\bauthor{\bsnm{Fuentes},~\bfnm{Montserrat}\binits{M.}}
		(\byear{2016}).
		\btitle{Isotropic covariance functions on spheres: some properties and modeling
			considerations}.
		\bjournal{J. Multivariate Anal.}
		\bvolume{143}
		\bpages{143--152}.
	\end{barticle}
	\endbibitem
	
	\bibitem[\protect\citeauthoryear{Janson}{1997}]{Janson1997}
	\begin{bbook}[author]
		\bauthor{\bsnm{Janson},~\bfnm{Svante}\binits{S.}}
		(\byear{1997}).
		\btitle{Gaussian {H}ilbert Spaces}.
		\bseries{Cambridge Tracts in Mathematics}
		\bvolume{129}.
		\bpublisher{Cambridge University Press, Cambridge}.
	\end{bbook}
	\endbibitem
	
	\bibitem[\protect\citeauthoryear{Kirchner and Bolin}{}]{kbsup}
	\begin{bmisc}[author]
		\bauthor{\bsnm{Kirchner},~\bfnm{Kristin}\binits{K.}} \AND
		\bauthor{\bsnm{Bolin},~\bfnm{David}\binits{D.}}
		\btitle{Supplement to ``Necessary and sufficient conditions for asymptotically
			optimal linear prediction of random fields on compact metric spaces''}.
		\bnote{DOI: 10.1214/XXX}.
	\end{bmisc}
	\endbibitem
	
	\bibitem[\protect\citeauthoryear{Krasnitskii}{1990}]{Krasnitskii1990}
	\begin{barticle}[author]
		\bauthor{\bsnm{Krasnitskii},~\bfnm{S.~M.}\binits{S.~M.}}
		(\byear{1990}).
		\btitle{Conditions for the equivalence of distributions of homogeneous
			{G}aussian fields}.
		\bjournal{Theory Probab. Appl.}
		\bvolume{34}
		\bpages{720--725}.
	\end{barticle}
	\endbibitem
	
	\bibitem[\protect\citeauthoryear{Kukush}{2019}]{Kukush2019}
	\begin{bbook}[author]
		\bauthor{\bsnm{Kukush},~\bfnm{Alexander}\binits{A.}}
		(\byear{2019}).
		\btitle{Gaussian {M}easures in {H}ilbert {S}pace: {C}onstruction and
			{P}roperties}.
		\bseries{Mathematics and Statistics}.
		\bpublisher{ISTE Ltd, London}.
	\end{bbook}
	\endbibitem
	
	\bibitem[\protect\citeauthoryear{Lindgren, Rue and
		Lindstr\"{o}m}{2011}]{lindgren11}
	\begin{barticle}[author]
		\bauthor{\bsnm{Lindgren},~\bfnm{Finn}\binits{F.}},
		\bauthor{\bsnm{Rue},~\bfnm{H{\aa}vard}\binits{H.}} \AND
		\bauthor{\bsnm{Lindstr\"{o}m},~\bfnm{Johan}\binits{J.}}
		(\byear{2011}).
		\btitle{An explicit link between {G}aussian fields and {G}aussian {M}arkov
			random fields: the stochastic partial differential equation approach}.
		\bjournal{J.\ R.\ Stat.\ Soc.\ Ser.\ B Stat.\ Methodol.}
		\bvolume{73}
		\bpages{423--498}.
		\bnote{With discussion}.
	\end{barticle}
	\endbibitem
	
	\bibitem[\protect\citeauthoryear{Marinucci and Peccati}{2011}]{marinucci2011}
	\begin{bbook}[author]
		\bauthor{\bsnm{Marinucci},~\bfnm{Domenico}\binits{D.}} \AND
		\bauthor{\bsnm{Peccati},~\bfnm{Giovanni}\binits{G.}}
		(\byear{2011}).
		\btitle{Random Fields on the Sphere}.
		\bseries{London Mathematical Society Lecture Note Series}
		\bvolume{389}.
		\bpublisher{Cambridge University Press, Cambridge}.
	\end{bbook}
	\endbibitem
	
	\bibitem[\protect\citeauthoryear{Mat\'{e}rn}{1960}]{matern1960}
	\begin{bbook}[author]
		\bauthor{\bsnm{Mat\'{e}rn},~\bfnm{Bertil}\binits{B.}}
		(\byear{1960}).
		\btitle{Spatial variation: {S}tochastic models and their application to some
			problems in forest surveys and other sampling investigations}.
		\bpublisher{Meddelanden Fr{\aa}n Statens Skogsforskningsinstitut, Band 49, Nr.\
			5, Stockholm}.
	\end{bbook}
	\endbibitem
	
	\bibitem[\protect\citeauthoryear{Mercer}{1909}]{Mercer1909}
	\begin{barticle}[author]
		\bauthor{\bsnm{Mercer},~\bfnm{J.}\binits{J.}}
		(\byear{1909}).
		\btitle{Functions of positive and negative type and their connection with the
			theory of integral equations}.
		\bjournal{Philos. Trans. Roy. Soc. A}
		\bvolume{209}
		\bpages{415--446}.
	\end{barticle}
	\endbibitem
	
	\bibitem[\protect\citeauthoryear{Parzen}{1959}]{Parzen1959}
	\begin{btechreport}[author]
		\bauthor{\bsnm{Parzen},~\bfnm{Emanuel}\binits{E.}}
		(\byear{1959}).
		\btitle{Statistical inference on time series by {H}ilbert space methods, {I}.}
		\btype{Technical Report} No. \bnumber{23},
		\bpublisher{Department of Statistics, Stanford University}.
	\end{btechreport}
	\endbibitem
	
	\bibitem[\protect\citeauthoryear{Schoenberg}{1942}]{schoenberg1942}
	\begin{barticle}[author]
		\bauthor{\bsnm{Schoenberg},~\bfnm{I.~J.}\binits{I.~J.}}
		(\byear{1942}).
		\btitle{Positive definite functions on spheres}.
		\bjournal{Duke Math. J.}
		\bvolume{9}
		\bpages{96--108}.
	\end{barticle}
	\endbibitem
	
	\bibitem[\protect\citeauthoryear{Shubin}{2001}]{Shubin2001}
	\begin{bbook}[author]
		\bauthor{\bsnm{Shubin},~\bfnm{M.~A.}\binits{M.~A.}}
		(\byear{2001}).
		\btitle{Pseudodifferential operators and spectral theory},
		\bedition{Second} ed.
		\bpublisher{Springer-Verlag, Berlin}.
	\end{bbook}
	\endbibitem
	
	\bibitem[\protect\citeauthoryear{Skorokhod and Yadrenko}{1973}]{Yadrenko1973}
	\begin{barticle}[author]
		\bauthor{\bsnm{Skorokhod},~\bfnm{A.~V.}\binits{A.~V.}} \AND
		\bauthor{\bsnm{Yadrenko},~\bfnm{M.~I.}\binits{M.~I.}}
		(\byear{1973}).
		\btitle{On absolute continuity of measures corresponding to homogeneous
			{G}aussian fields}.
		\bjournal{Theory Probab. Appl.}
		\bvolume{18}
		\bpages{27--40}.
	\end{barticle}
	\endbibitem
	
	\bibitem[\protect\citeauthoryear{Stein}{1988}]{stein88}
	\begin{barticle}[author]
		\bauthor{\bsnm{Stein},~\bfnm{Michael~L.}\binits{M.~L.}}
		(\byear{1988}).
		\btitle{Asymptotically efficient prediction of a random field with a
			misspecified covariance function}.
		\bjournal{Ann.\ Stat.}
		\bvolume{16}
		\bpages{55--63}.
	\end{barticle}
	\endbibitem
	
	\bibitem[\protect\citeauthoryear{Stein}{1990}]{stein90}
	\begin{barticle}[author]
		\bauthor{\bsnm{Stein},~\bfnm{Michael~L.}\binits{M.~L.}}
		(\byear{1990}).
		\btitle{Uniform asymptotic optimality of linear predictions of a random field
			using an incorrect second-order structure}.
		\bjournal{Ann.\ Stat.}
		\bvolume{18}
		\bpages{850--872}.
	\end{barticle}
	\endbibitem
	
	\bibitem[\protect\citeauthoryear{Stein}{1993}]{stein93}
	\begin{barticle}[author]
		\bauthor{\bsnm{Stein},~\bfnm{Michael~L.}\binits{M.~L.}}
		(\byear{1993}).
		\btitle{A simple condition for asymptotic optimality of linear predictions of
			random fields}.
		\bjournal{Statist. Probab. Lett.}
		\bvolume{17}
		\bpages{399--404}.
	\end{barticle}
	\endbibitem
	
	\bibitem[\protect\citeauthoryear{Stein}{1997}]{stein97}
	\begin{barticle}[author]
		\bauthor{\bsnm{Stein},~\bfnm{Michael~L.}\binits{M.~L.}}
		(\byear{1997}).
		\btitle{Efficiency of linear predictors for periodic processes using an
			incorrect covariance function}.
		\bjournal{J. Statist. Plann. Inference}
		\bvolume{58}
		\bpages{321--331}.
	\end{barticle}
	\endbibitem
	
	\bibitem[\protect\citeauthoryear{Stein}{1999}]{stein99-paper}
	\begin{barticle}[author]
		\bauthor{\bsnm{Stein},~\bfnm{Michael~L.}\binits{M.~L.}}
		(\byear{1999}).
		\btitle{Predicting random fields with increasing dense observations}.
		\bjournal{Ann. Appl. Probab.}
		\bvolume{9}
		\bpages{242--273}.
	\end{barticle}
	\endbibitem
	
	\bibitem[\protect\citeauthoryear{Stein}{1999}]{stein99}
	\begin{bbook}[author]
		\bauthor{\bsnm{Stein},~\bfnm{Michael~L.}\binits{M.~L.}}
		(\byear{1999}).
		\btitle{Interpolation of {S}patial {D}ata: {S}ome {T}heory for {K}riging}.
		\bseries{Springer Series in Statistics}.
		\bpublisher{Springer-Verlag, New York}.
	\end{bbook}
	\endbibitem
	
	\bibitem[\protect\citeauthoryear{Steinwart and Scovel}{2012}]{Steinwart2012}
	\begin{barticle}[author]
		\bauthor{\bsnm{Steinwart},~\bfnm{Ingo}\binits{I.}} \AND
		\bauthor{\bsnm{Scovel},~\bfnm{Clint}\binits{C.}}
		(\byear{2012}).
		\btitle{Mercer's theorem on general domains: on the interaction between
			measures, kernels, and {RKHS}s}.
		\bjournal{Constr. Approx.}
		\bvolume{35}
		\bpages{363--417}.
	\end{barticle}
	\endbibitem
	
	\bibitem[\protect\citeauthoryear{Whittle}{1963}]{whittle63}
	\begin{barticle}[author]
		\bauthor{\bsnm{Whittle},~\bfnm{P.}\binits{P.}}
		(\byear{1963}).
		\btitle{Stochastic processes in several dimensions}.
		\bjournal{Bull.\ Internat.\ Statist.\ Inst.}
		\bvolume{40}
		\bpages{974--994}.
	\end{barticle}
	\endbibitem
	
	\bibitem[\protect\citeauthoryear{Zhang}{2004}]{Zhang2004}
	\begin{barticle}[author]
		\bauthor{\bsnm{Zhang},~\bfnm{Hao}\binits{H.}}
		(\byear{2004}).
		\btitle{Inconsistent estimation and asymptotically equal interpolations in
			model-based geostatistics}.
		\bjournal{J.\ Amer.\ Statist.\ Assoc.}
		\bvolume{99}
		\bpages{250--261}.
	\end{barticle}
	\endbibitem
	
\end{thebibliography}

\begin{thebibliography}{2}
	
	\bibitem[\protect\citeauthoryear{Cleveland}{1971}]{CLEVELAND71}
	\begin{barticle}[author]
		\bauthor{\bsnm{Cleveland},~\bfnm{William~S.}\binits{W.~S.}}
		(\byear{1971}).
		\btitle{Projection with the wrong inner product and its application to
			regression with correlated errors and linear filtering of time series}.
		\bjournal{Ann.\ Math.\ Statist.}
		\bvolume{42}
		\bpages{616--624}.
	\end{barticle}
	\endbibitem
	
	\bibitem[\protect\citeauthoryear{Da~Prato and Zabczyk}{2014}]{daPrato2014}
	\begin{bbook}[author]
		\bauthor{\bsnm{Da~Prato},~\bfnm{Giuseppe}\binits{G.}} \AND
		\bauthor{\bsnm{Zabczyk},~\bfnm{Jerzy}\binits{J.}}
		(\byear{2014}).
		\btitle{Stochastic {E}quations in {I}nfinite {D}imensions},
		\bedition{second} ed.
		\bseries{Encyclopedia of Mathematics and its Applications}
		\bvolume{152}.
		\bpublisher{Cambridge University Press, Cambridge}.
	\end{bbook}
	\endbibitem
	


	
\end{thebibliography}

\newpage
\renewcommand*\footnoterule{}

	\begin{frontmatter}
	
	\title{Supplement to ``Necessary and sufficient conditions \\ 
		for asymptotically optimal 
		linear prediction \\ 
		of random fields on compact metric spaces''}
	\runtitle{Linear prediction for misspecified random fields}
	
	\begin{aug}
		\author[A]{\fnms{Kristin} \snm{Kirchner}\ead[label=e1]{k.kirchner@tudelft.nl}}
		\and  
		\author[B]{\fnms{David} \snm{Bolin}\ead[label=e2]{david.bolin@kaust.edu.sa}} 
		\address[A]{Delft Institute of Applied Mathematics, 
			Delft University of Technology,
			\printead{e1}}
		
		\address[B]{CEMSE Division, 
			King Abdullah University of Science and Technology, 
			\printead{e2}}
	\end{aug}

\end{frontmatter}

\begin{appendix}
	
	In this supplement 
	all references to sections or equations 
	of the form ``Section~X'' or ``(X.Y)'', 
	where~X is a number (and not a letter), 
	refer to the main file of the article.

	\section{Equivalence of Gaussian measures}\label{appendix:feldman-hajek}
	
	In this section we 
	recall several notions 
	from operator theory 
	and we state 
	the Feldman--H\'ajek theorem 
	as formulated by 
	Da Prato and Zabczyk~\cite{daPrato2014}, 
	see Theorem~\ref{thm:feldman-hajek} below. 
	This theorem characterizes equivalence 
	of two Gaussian measures on a Hilbert space 
	by three necessary and sufficient 
	conditions. 
	
	Assume that $\bigl( E,\scalar{\,\cdot\,,\,\cdot\,}{E} \bigr)$ 
	is a separable Hilbert space 
	and let $\{e_j\}_{j\in\bbN}$ 
	be an orthonormal basis for $E$. 
	The space of bounded 
	linear operators on~$E$ 
	is denoted by~$\cL(E)$, 
	and we write $\dual{T}\from E \to E$ for 
	the adjoint operator of $T\in\cL(E)$. 
	We recall that a linear operator~${T\from E \to E}$ 
	is compact, denoted by $T\in\cK(E)$,  
	if and only if it is the limit 
	(in the space $\cL(E)$) 
	of finite-rank operators. 
	Besides, $T$ is said to be a trace-class operator 
	or a Hilbert--Schmidt operator 
	provided that 
	$\tr(T) := \sum_{j\in\bbN} \scalar{ T e_j, e_j }{E} < \infty$ 
	or    
	$\tr(\dual{T} T) = \sum_{j\in\bbN} \norm{ T e_j }{E}^2 < \infty$, 
	respectively.
	In the case that $T$ is self-adjoint (i.e., $\dual{T}=T$), 
	these conditions are equivalent to  
	the (real-valued) eigenvalues of $T$ 
	being summable and square-summable, respectively. 
	
	Let  
	$\mu = \normal(m,\cC)$ 
	be a Gaussian measure on~$E$ 
	with mean $m\in E$ and self-adjoint, 
	positive definite, trace-class 
	covariance operator $\cC\from E\to E$. 
	The Cameron--Martin space $\cC^{1/2}(E)$ associated with 
	$\mu$ on $E$ 
	(also known as reproducing kernel Hilbert space of $\mu$) 
	is the Hilbert space which is defined as 
	the range 
	of~$\mathcal{C}^{1/2}$ in~$E$ 
	and which is equipped with the inner product
	$\scalar{\cC^{-1}\,\cdot\,, \,\cdot\,}{E}$. 
	As opposed to 
	Da Prato and Zabczyk~\cite{daPrato2014}, 
	we always assume that $\cC$ 
	is strictly positive definite  
	so that this definition requires no pseudo-inverse. 
	
	Let $\MU=\normal(\widetilde{m},\CC)$ 
	be a second Gaussian measure on $E$. 
	Then, $\mu$ and $\MU$ 
	are said to be equivalent 
	if they are mutually absolutely continuous. 
	That is, for all sets $A$ in the 
	Borel $\sigma$-algebra $\cB(E)$, 
	one has $\mu(A) = 0$ if and only if $\MU(A) = 0$. 
	In contrast, $\mu$ and $\MU$ 
	are called singular (or orthogonal) if there exists 
	some $A\in\cB(E)$ such that $\mu(A) = 0$ and $\MU(A) = 1$. 
	The next theorem is taken from 
	\cite[][Theorem~2.25]{daPrato2014}. 
	
	\begin{theorem}[Feldman--H\'ajek]\label{thm:feldman-hajek}
		Two Gaussian measures $\mu = \normal(m,\cC)$, 
		$\MU=\normal(\widetilde{m},\CC)$ 
		on the Hilbert space~$E$ 
		are either singular or equivalent. 
		They are equivalent 
		if and only if 
		the following three conditions 
		are satisfied:
		\begin{enumerate}[label = \normalfont(\roman*), labelsep=5mm, topsep=4pt]
			\item\label{fh-1}\hspace*{-2mm}  
			The Cameron--Martin spaces 
			$\cC^{1/2} (E)$ and $\widetilde{\cC}^{1/2} (E)$ 
			are norm equivalent. 
			\item\label{fh-2}\hspace*{-3mm} 
			The difference of the means satisfies 
			$m - \widetilde{m} \in \cC^{1/2}(E)$. 
			\item\label{fh-3}\hspace*{-4.5mm} 
			The operator 
			$T_1 :=  
			\cC^{-1/2} \widetilde{\cC} 
			\cC^{-1/2} - \cI_{E}$ 
			is a Hilbert--Schmidt operator on~$E$. 
		\end{enumerate}
	\end{theorem} 
	
	\section{Auxiliary results}  
	\label{appendix:auxiliary}
	
	The following 
	Lemmas~\ref{lem:Ts-compact},~\ref{lem:spectral} and~\ref{lem:int-H0} 
	contain auxiliary results which are used  
	in the proofs of Section~4.  
	Throughout this section, let $( E, \scalar{\,\cdot\,, \,\cdot\,}{E} )$ be a separable Hilbert space 
	with $\dim(E)=\infty$, 
	and identity $\cI_E\from E\to E$.\vspace*{-4.7mm}\pagebreak
	
	\begin{lemma}\label{lem:Ts-compact} 
		Let 
		$\cC,\CC\from E \to E$ be  
		self-adjoint and positive definite 
		linear operators. 
		Suppose that 
		$\CC^{1/2}\cC^{-1/2}$ defines  
		an isomorphism on $E$ 
		and that there exists 
		a positive real number 
		$a\in(0,\infty)$ such that the operator 
		$T_a := \cC^{-1/2}\CC\cC^{-1/2} - a \cI_{E}$
		is compact on~$E$. 
		Then, the following operators are all compact 
		on $E$: 
		\[ 
		\CC^{-1/2} \cC \CC^{-1/2} 
		- a^{-1} \cI_{E}, 
		\qquad 
		\CC^{1/2} \cC^{-1} \CC^{1/2} 
		- a \cI_{E}, 
		\qquad 
		\cC^{1/2} \CC^{-1} \cC^{1/2} 
		- a^{-1} \cI_{E}. 
		\] 
	\end{lemma} 
	
	\begin{proof}
		By assumption  
		$T_a = \cC^{-1/2} \CC \cC^{-1/2} - a \cI_{E}$ 
		is compact on $E$ and  
		$S := \CC^{1/2} \cC^{-1/2}$ 
		is an isomorphism on $E$ and, thus,  
		$S,\dual{S},S^{-1}, \dual{(S^{-1})}\in\cL(E)$. 
		Since the space of compact operators 
		$\cK(E)$ forms a two-sided ideal in 
		$\cL(E)$, we conclude that 
		\[
		\widetilde{T}_a 
		:= 
		\CC^{-1/2} \cC \CC^{-1/2} 
		- a^{-1} \cI_{E}
		= 
		- a^{-1} \dual{(S^{-1})} T_a S^{-1} \in \cK(E). 
		\]
		Then, again by the ideal property of $\cK(E)$, 
		$\CC^{1/2} 
		\cC^{-1} 
		\CC^{1/2} 
		- 
		a \cI_E 
		= 
		S T_a S^{-1} 
		\in \cK(E)$  
		and 
		$\cC^{1/2} 
		\CC^{-1} 
		\cC^{1/2} 
		-a^{-1} \cI_E 
		= 
		S^{-1} \widetilde{T}_a S 
		\in \cK(E)$ follow. 
	\end{proof}  
	
	\begin{lemma}\label{lem:spectral} 
		Let $\{e_j\}_{j\in\bbN}$ be an orthonormal basis for $E$. 
		For $n\in\bbN$, define the $n$-dimen- sional subspace  
		$E_n := \operatorname{span}\{e_j\}_{j=1}^n$, 
		with $E$-orthogonal complement 
		$E_n^\perp=\operatorname{span}\{e_j\}_{j>n}$,  
		and let $P_n\from E \to E_n$ 
		as well as 
		$P_n^\perp := \cI_E - P_n$ 
		be the $E$-orthogonal 
		projections onto $E_n$ and $E_n^\perp$, respectively. 
		Assume that $S\from E \to E$ is a  
		self-adjoint, positive definite
		isomorphism 
		such that, for any $a\in(0,\infty)$, the operator 
		$T_a := S - a \cI_E$ is not compact on~$E$. 
		Then, there exist 
		$\{ \underline{a}_n \}_{n\in\bbN}, \{ \overline{a}_n \}_{n\in\bbN} 
		\subset \bigl[\norm{S^{-1}}{\cL(E)}^{-1}, \norm{S}{\cL(E)}\bigr]$,  
		$\delta \in (0,\infty)$  
		and, for every $n\in\bbN$, 
		$\underline{w}_n, \overline{w}_n\in E_n^\perp\setminus\{0\}$ 
		such that, for all $n\in\bbN$,  
		\begin{equation}\label{eq:lem:spectral} 
			\overline{a}_n - \underline{a}_n \geq\delta, 
			\qquad 
			\left| 
			\frac{ \scalar{S \underline{w}_n, \underline{w}_n }{E} }{ 
				\scalar{ \underline{w}_n, \underline{w}_n }{E} } 
			- 
			\underline{a}_n \right| 
			< \frac{\hat{\delta}}{3} , 
			\quad\; 
			\text{and}
			\quad\;\;  
			\left| 
			\frac{ \scalar{S \overline{w}_n, \overline{w}_n }{E} }{ 
				\scalar{ \overline{w}_n, \overline{w}_n }{E} } 
			- 
			\overline{a}_n \right| 
			< \frac{\hat{\delta}}{3}, 
		\end{equation}
		where 
		$\hat{\delta} := \delta\norm{S^{-1}}{\cL(E)}^{-2} \norm{S}{\cL(E)}^{-2} 
		\in(0,\delta]$. 
	\end{lemma} 
	
	\begin{proof} 
		We will show that 
		\begin{equation}\label{eq:lem:spectral:proof}  
			\underline{a}_n 
			:= 
			\inf_{u \in E_n^\perp\setminus\{0\} } 
			\frac{ \scalar{Su, u}{E} }{ \scalar{u, u}{E} }, 
			\qquad 
			\overline{a}_n 
			:= 
			\sup_{u \in E_n^\perp\setminus\{0\} }
			\frac{ \scalar{Su, u}{E} }{ \scalar{u, u}{E} }, 
			\qquad\;\;   
			n\in\bbN, 
		\end{equation}
		and 
		$\delta := \inf_{n\in\bbN} 
		\{ \overline{a}_n - \underline{a}_n \}$ 
		have the desired properties. 
		
		\textbf{Step 1:} $\delta>0$. 
		By the definitions in \eqref{eq:lem:spectral:proof}, 
		$\overline{a}_n 
		- 
		\underline{a}_n \geq 0$ for all $n\in\bbN$ and 
		$\delta\geq 0$ follows. 
		Since $S\from E \to E$ is an isomorphism, 
		the sequences $\{ \underline{a}_n \}_{n\in\bbN}$ and  
		$\{ \overline{a}_n \}_{n\in\bbN}$
		take values in the compact interval 
		$[\underline{\alpha},\overline{\alpha}]\subset (0,\infty)$, 
		where $\underline{\alpha}:=\norm{S^{-1}}{\cL(E)}^{-1}$, 
		$\overline{\alpha}:=\norm{S}{\cL(E)}$.  
		Furthermore, by definition they are monotone 
		increasing and decreasing, respectively. 
		For this reason they converge 
		$\underline{a}_n \uparrow \underline{a}_{*}$, 
		$\overline{a}_n \downarrow \overline{a}_{*}$ 
		as $n\to\infty$, 
		with limits $\underline{a}_*, \overline{a}_*\in[\underline{\alpha},\overline{\alpha}]$. 
		Assume that $\delta=0$. 
		This would~imply~that 
		$\underline{a}_* = \overline{a}_* = a_*$ 
		and, for all $\eps\in(0,\infty)$, there exists 
		$n_\eps\in\bbN$ such that 
		\begin{align*} 
			\forall n \geq n_\eps: 
			\quad\;\; 
			\eps 
			&> 
			|\underline{a}_n - a_*| 
			=  
			a_* 
			- 
			\inf_{u \in E_n^\perp\setminus\{0\} } 
			\tfrac{ \scalar{Su, u}{E} }{\scalar{u, u}{E}} 
			= 
			\sup_{u \in E_n^\perp\setminus\{0\} } 
			\tfrac{ \scalar{ (a_* \cI_E - S) u, u}{E} }{\scalar{u, u}{E}} , 
			\\
			\forall n \geq n_\eps: 
			\quad\;\; 
			\eps 
			&> 
			|\overline{a}_n - a_*| 
			=  
			\sup_{u \in E_n^\perp\setminus\{0\} } 
			\tfrac{ \scalar{Su, u}{E} }{\scalar{u, u}{E}} 
			- 
			a_*
			= 
			\sup_{u \in E_n^\perp\setminus\{0\} } 
			\tfrac{ \scalar{ (S - a_* \cI_E )u, u }{E} }{\scalar{u, u}{E}}. 
		\end{align*} 
		This shows that  
		$T_{a_*}$ is the limit in $\cL(E)$ 
		of finite-rank operators,  
		\[
		\bigl\| P_n^\perp T_{a_*} P_n^\perp \bigr\|_{\cL(E) } 
		= 
		\sup_{w \in E \setminus\{0\} } 
		\frac{ | \scalar{ P_n^\perp T_{a_*} P_n^\perp w, w}{E} | }{\scalar{w, w}{E}} 
		= 
		\sup_{u \in E_n^\perp \setminus\{0\} } 
		\frac{ | \scalar{ (S-a_* \cI_E ) u, u}{E} | }{\scalar{u, u}{E}} 
		< \eps, 
		\]
		for all $n\geq n_\eps$, 
		contradicting the assumption that $T_{a_*}$ is not compact on $E$. 
		
		\textbf{Step 2:} Since $\delta\in(0,\infty)$, 
		for every $n\in\bbN$, there exist vectors 
		$\underline{w}_n, \overline{w}_n\in E_n^\perp$ 
		such that 
		$\bigl| 
		\tfrac{ \scalar{S \underline{w}_n, \underline{w}_n }{E}}{ 
			\scalar{\underline{w}_n, \underline{w}_n}{E}} 
		- \underline{a}_n \bigr| < \tfrac{\hat{\delta}}{3}$ 
		and 
		$\bigl| 
		\tfrac{ \scalar{S \overline{w}_n, \overline{w}_n }{E}}{ 
			\scalar{\overline{w}_n, \overline{w}_n}{E}} 
		- \overline{a}_n \bigr| < \tfrac{\hat{\delta}}{3}$ 
		by definition of $\underline{a}_n, \overline{a}_n$ 
		as the accumulation points in~\eqref{eq:lem:spectral:proof}.   
	\end{proof}

	In what follows, 
	$C(\cX)$ 
	denotes the vector space 
	of all continuous functions 
	$v\from\cX\to\bbR$, 
	defined on the connected, compact metric space 
	$(\cX, d_\cX)$. 
	Let $\nu_\cX$ 
	be a strictly positive, finite Borel measure on $(\cX,\cB(\cX))$. 
	Recall that 
	the norm $\norm{v}{C(\cX)}:=\sup_{x\in\cX}|v(x)|$ 
	renders $C(\cX)$  
	a Banach space.

	\begin{lemma}\label{lem:int-H0} 
		Let 
		$\GP^0\from\cX\times\Omega\to\bbR$ 
		be a centered Gaussian random field 
		indexed by the compact metric space $(\cX,d_\cX)$ 
		with continuous, 
		strictly positive definite covariance function 
		$\varrho\from\cX\times\cX\to\bbR$, 
		and let $\cH^0$ be the Hilbert space  
		defined in Equation~(2.4) in Section~2.1. 
		Then, for every $v\in C(\cX)$, 
		the random variable 
		$\gp_v := \scalar{\GP^0, v}{L_2(\cX,\nu_\cX)}$ is 
		an element of~$\cH^0$. 
	\end{lemma} 
	
	\begin{proof} 
		We first note that 
		continuity of the inner product 
		$\scalar{\,\cdot\,, \,\cdot\,}{L_2(\cX,\nu_\cX)}$
		and measurability of the 
		process 
		$\GP^0 \from (\Omega,\cA)
		\to (L_2(\cX,\nu_\cX),\cB(L_2(\cX,\nu_\cX)))$ 
		imply that, for every $v\in C(\cX)$, the mapping 
		$\gp_v\from (\Omega,\cA)\to(\bbR,\cB(\bbR))$ 
		is measurable, i.e., 
		that $\gp_v = \scalar{\GP^0, v}{L_2(\cX,\nu_\cX)}$ is a well-defined 
		real-valued random variable. 
		We now show that, for any $\eps\in(0,\infty)$, 
		there exist an integer $K\in\bbN$, 
		real numbers $a_1,\ldots,a_K\in\bbR$   
		and points $x_1,\ldots,x_K\in\cX$, such that 
		\begin{equation}\label{eq:lem:int-H0-0}
			\textstyle 
			\biggl\| 
			\sum\limits_{k=1}^K a_k \GP^0(x_k) 
			- 
			\gp_v 
			\Bigr\|_{\cH^0}^2 
			= 
			\pE \biggl[
			\Bigl| 
			\sum\limits_{k=1}^K a_k \GP^0(x_k) 
			- 
			\gp_v
			\Bigr|^2 
			\biggr]
			<\eps,  
		\end{equation}
		i.e., that we can approximate 
		$\gp_v = \scalar{\GP^0,v}{L_2(\cX,\nu_\cX)}$ 
		arbitrarily well in $L_2(\Omega,\bbP)$
		by an element of the vector space~$\cZ^0$ 
		given in~(2.3). 
		This proves the assertion of the lemma. 
		
		The case $v=0$ is trivial. Thus, from now 
		on we assume that $v\in C(\cX)\setminus\{0\}$ 
		so that $\| v\|_{C(\cX)} \in (0,\infty)$. 
		We first note that by square-integrability of $\GP^0$ 
		Fubini's theorem allows us to exchange the order 
		of integration with respect to $(\Omega,\bbP)$ 
		and with respect to $(\cX,\nu_\cX)$. 
		Thus, we find that 
		\begin{align}
			{\textstyle\biggl\| 
				\sum\limits_{k=1}^K a_k \GP^0(x_k)} 
			- 
			\gp_v 
			\biggr\|_{\cH^0}^2 
			&= 
			\int_\cX\int_\cX \varrho(x, x') v(x) v(x') \, \rd\nu_\cX(x) \, \rd\nu_\cX(x') 
			\notag 
			\\
			&
			+ 	{\textstyle 
				\sum\limits_{k=1}^K 
				\sum\limits_{\ell=1}^K a_k a_\ell \varrho(x_k, x_\ell) 
				- 2 
				\sum\limits_{k=1}^K a_k }
			\int\nolimits_\cX \varrho(x_k, x') v(x') \, \rd\nu_\cX(x').  
			\label{eq:lem:int-H0-1} 
		\end{align} 

		Now fix $\eps\in(0,\infty)$. 
		Furthermore, let $\{e_j\}_{j\in\bbN}\subset C(\cX)$ 
		be an orthonormal basis for $L_2(\cX,\nu_\cX)$ 
		consisting of the continuous representatives 
		of eigenfunctions of the covariance operator $\cC$ 
		corresponding to $\varrho$, see~(2.1),  
		with corresponding positive eigenvalues $\{\gamma_j\}_{j\in\bbN}$. 
		Since the series in~(2.2) converges uniformly, 
		there exists a finite integer $N\in\bbN$, such that 
		\begin{equation}\label{eq:lem:int-H0-2}
			\textstyle 
			\sup\nolimits_{x,x'\in\cX}
			|\varrho(x,x') - \varrho_N(x,x')| 
			<
			\tfrac{\eps}{6} \, \norm{v}{C(\cX)}^{-2} |\cX|^{-2} ,  
		\end{equation} 
		where $\varrho_N(x,x') 
		:= \sum\nolimits_{j=1}^N \gamma_j e_j(x) e_j(x')$ 
		and $|\cX|:=\nu_\cX(\cX)\in(0,\infty)$. 
		
		By continuity of the eigenfunctions 
		$M_N := \max_{1\leq j \leq N}\norm{e_j}{C(\cX)} \in(0,\infty)$ and, 	
		for every $x\in\cX$, there exists 
		$\delta^*_x \in(0,\infty)$ 
		such that, for all $x'\in B(x,\delta^*_x)$,  
		\begin{equation}\label{eq:lem:int-H0-3} 
			\textstyle 
			\max_{1\leq j \leq N} 
			|e_j(x) - e_j(x')| 
			< 
			\tfrac{\eps}{6} \,    
			\bigl( \tr(\cC) M_N \norm{v}{C(\cX)}^2 |\cX|^2 \bigr)^{-1} . 
		\end{equation}
		Here,  
		$B(x,\delta):= 
		\{x'\in\cX : d_\cX(x,x')< \delta\}$ 
		denotes the open ball 
		in $(\cX,d_\cX)$ 
		centered at $x\in\cX$, 
		with radius 
		$\delta>0$. 
		An open cover of $\cX$ is then given by 
		$\cX \subseteq \bigcup_{x\in\cX} B(x,\delta^*_x)$. 
		By compactness of the metric space 
		$(\cX,d_\cX)$, it contains a finite subcover, 
		i.e., there are finitely many $x_1,\ldots,x_K\in\cX$ 
		such that 
		$\cX \subseteq \bigcup_{k=1}^K  B(x_k,\delta^*_{x_k})$. 
		Furthermore, there exists a corresponding partition of unity 
		$\{\chi_k\}_{k=1}^K$, i.e., 
		$\chi_k \from \cX \to [0,1]$ has support 
		$\operatorname{supp}(\chi_k)\subseteq B(x_k,\delta^*_{x_k})$ 
		and $\sum_{k=1}^K \chi_k(x) =1$ for all $x\in\cX$. 
		We set 
		$a_k := \int_\cX \chi_k(x) v(x) \, \rd\nu_\cX(x)$, 
		for $k=1,\ldots,K$. 
		Note that this definition implies the estimate 
		\begin{equation}\label{eq:lem:int-H0-4}
			{\textstyle 
				\sum\limits_{k=1}^K |a_k| 
				\leq 
				\sum\limits_{k=1}^K } 
			\int_\cX \chi_k(x) |v(x)| \, \rd\nu_\cX(x)
			\leq \norm{v}{C(\cX)} |\cX|. 
		\end{equation}
		The identity \eqref{eq:lem:int-H0-1} then yields 
		\begin{align*} 
			\biggl\| 
			{\textstyle\sum\limits_{k=1}^K a_k} 
			\GP^0(x_k) 
			- 
			\gp_v 
			&\biggr\|_{\cH^0}^2 
			= 
			{\textstyle 
				\sum\limits_{k=1}^K \sum\limits_{\ell=1}^K a_k a_\ell} 
			(\varrho(x_k, x_\ell) - \varrho_N(x_k, x_\ell))
			\\
			&- 
			2 
			{\textstyle \sum\limits_{k=1}^K  a_k} 
			\int_\cX 
			(\varrho(x_k, x') - \varrho_N(x_k, x')) v(x') \, \rd\nu_\cX(x') 
			\\
			&+ 
			\int_\cX \int_\cX (\varrho(x, x')-\varrho_N(x,x')) v(x) v(x') \, \rd\nu_\cX(x) \, \rd\nu_\cX(x') 
			\\
			&+
			{\textstyle \sum\limits_{k=1}^K a_k}  
			\biggl( 
			{\textstyle\sum\limits_{\ell=1}^K  a_\ell} 
			\varrho_N(x_k, x_\ell) 
			-  
			\int_\cX \varrho_N(x_k, x') v(x') \, \rd\nu_\cX(x') 
			\biggr) 
			\\
			&+ 
			\int_\cX 
			\biggl( \int_\cX 
			\varrho_N(x, x') v(x) \, \rd\nu_\cX(x) 
			- 
			{\textstyle \sum\limits_{k=1}^K a_k}  
			\varrho_N(x_k, x') 
			\biggr) 
			v(x') \, \rd\nu_\cX(x') 
			\\
			=:\,&
			\text{(I)} 
			- 2 \, 
			\text{(II)} 
			+
			\text{(III)} 
			+ 
			\text{(IV)} 
			+ 
			\text{(V)}. 
		\end{align*} 
		We now bound the absolute values  
		of these five expressions term by term. 
		Firstly, by \eqref{eq:lem:int-H0-2} 
		and \eqref{eq:lem:int-H0-4} 
		we obtain that 
		\begin{align*} 
			| \text{(I)} | 
			&\leq 
			\sup_{x,x'\in\cX}
			|\varrho(x,x') - \varrho_N(x,x')| 
			\biggl( {\textstyle \sum\limits_{k=1}^K |a_k| } \biggr)^2 
			< 
			\tfrac{\eps}{6}. 
			\intertext{Again by \eqref{eq:lem:int-H0-2} 
				and  \eqref{eq:lem:int-H0-4} 
				also the second and third terms can be bounded,}
			| \text{(II)} |
			&\leq  
			\sup_{x,x'\in\cX}
			|\varrho(x,x') - \varrho_N(x,x')| 
			{\textstyle \sum\limits_{k=1}^K |a_k| } 
			\int_{\cX} |v(x')| \, \rd\nu_\cX(x') 
			< 
			\tfrac{\eps}{6},  
			\\
			| \text{(III)} |
			&\leq 
			\sup_{x,x'\in\cX}
			|\varrho(x,x') - \varrho_N(x,x')| 
			\biggl( 
			\int_\cX |v(x)| \, \rd\nu_{\cX}(x) 
			\biggr)^2 
			< 
			\tfrac{\eps}{6}. 
			\intertext{For the remaining two terms, 
				we use the relation \eqref{eq:lem:int-H0-3} 
				and thus obtain that} 
			|\text{(IV)}| 
			&\leq 
			{\textstyle \sum\limits_{k=1}^K |a_k| }
			\biggl| 
			{\textstyle \sum\limits_{\ell=1}^K } 
			\int_\cX 
			\chi_\ell(x') v(x') (
			\varrho_N(x_k, x_\ell) 
			-  
			\varrho_N(x_k, x') ) \, \rd\nu_\cX(x') 
			\biggr|
			\\
			&\leq 
			{\textstyle 
				\sum\limits_{j=1}^N 
				\gamma_j 
				\sum\limits_{k=1}^K |a_k| 
				|e_j(x_k)| 
				\sum\limits_{\ell=1}^K } 
			\int_{B_\ell} 
			\chi_\ell(x') \, |v(x')| \, 
			|e_j(x_\ell) 
			-  
			e_j(x') | \, \rd\nu_\cX(x') 
			\\
			&\leq 
			\tr(\cC) M_N 
			\norm{v}{C(\cX)}^2 |\cX|^2 
			\max_{1\leq j \leq N}  
			\max_{1\leq \ell\leq K} 
			\sup_{x'\in B_\ell } 
			|e_j(x_\ell) 
			-  
			e_j(x') | 
			< 
			\tfrac{\eps}{6},  
			\intertext{where $B_\ell := B(x_\ell, \delta^*_{x_\ell})$ 
				for $\ell=1,\ldots,K$, 
				as well as} 
			|\text{(V)}| 
			&=
			\biggl| 
			\int_\cX 
			\biggl( 
			{\textstyle \sum\limits_{k=1}^K }
			\int_\cX 
			\chi_k(x) 
			v(x) 
			(\varrho_N(x,x') - \varrho_N(x_k,x')) 
			\, \rd\nu_\cX(x)
			\biggr) 
			v(x') \, \rd\nu_\cX(x') 
			\biggr|
			\\
			&  
			= 
			\biggl| 
			{\textstyle \sum\limits_{j=1}^N }
			\gamma_j 
			\int_\cX 
			e_j(x') 
			v(x') \, \rd\nu_\cX(x')  
			{\textstyle \sum\limits_{k=1}^K } 
			\int_\cX 
			\chi_k(x) 
			v(x) 
			(e_j(x) - e_j(x_k)) 
			\, \rd\nu_\cX(x) 
			\biggr| 
			\\
			& 
			\leq 
			{\textstyle \sum\limits_{j=1}^N } 
			\gamma_j 
			\norm{e_j}{C(\cX)} 
			\norm{v}{C(\cX)} 
			|\cX| 
			{\textstyle \sum\limits_{k=1}^K } 
			\int_{B_k} 
			\chi_k(x) 
			|v(x)| \, 
			|e_j(x) - e_j(x_k)| 
			\, \rd\nu_\cX(x) 
			\\
			&  
			\leq 
			\tr(\cC) M_N 
			\norm{v}{C(\cX)}^2 |\cX|^2  
			\max_{1\leq j \leq N}  
			\max_{1\leq k \leq K}
			\sup_{x\in B_k} 
			|e_j(x) - e_j(x_k)|  
			< 
			\tfrac{\eps}{6} . 
		\end{align*} 
		We conclude that, for this choice 
		of $K\in\bbN$, $a_1,\ldots,a_k\in\bbR$, 
		$x_1,\ldots,x_K\in\cX$ \eqref{eq:lem:int-H0-0} holds, 
		which completes the proof, 
		since $\eps\in(0,\infty)$ was arbitrary.
	\end{proof} 
	
	We close this section with a further auxiliary result which shows that 
	assuming boundedness (from below and above), uniformly with
	respect to $n$ and $h$, of any fraction in~(3.6) 
	for all admissible sequences of subspaces 
	$\{\cH_n\}_{n\in\bbN}\in\cS^\mu_{\mathrm{adm}}$ 
	implies 
	uniform boundedness (from below and above) 
	with respect to $\{\cH_n\}_{n\in\bbN}\in\cS^\mu_{\mathrm{adm}}$ , $n$ and $h$ 
	of all terms in~(3.6). 
	
	\begin{lemma}\label{lem:uniform-Smu}
		Let $\mu=\normal(m,\cC)$ 
		and $\MU=\normal(\widetilde{m},\CC)$. 
		In addition, let $\hn,\Hn$ denote the best linear predictors of $h$ 
		based on~$\cH_n$ and the measures~$\mu$ and~$\MU$, 
		respectively, and let 
		$\cS^\mu_{\mathrm{adm}}$ 
		be defined as in~(2.12). 
		In the case that 
		for every $\{\cH_n\}_{n\in\bbN}\in\cS^\mu_{\mathrm{adm}}$ 
		there exist $k,K\in(0,\infty)$ such that 
		one of the following fractions is bounded 
		from below by $k$ and from above by $K$, 
		uniformly with respect to $n\in\bbN$ and $h\in\cH_{-n}$, 
		\begin{equation}\label{eq:uniform-Smu} 
			\frac{\PV\bigl[ \hn - h \bigr]}{\pV\bigl[ \hn - h\bigr]}, 
			\qquad 
			\frac{\pV\bigl[ \Hn - h \bigr]}{\PV\bigl[ \Hn - h\bigr]}, 
			\qquad 
			\frac{\pV\bigl[ \Hn - h \bigr]}{\pV\bigl[ \hn - h\bigr]}, 
			\qquad 
			\frac{\PV\bigl[ \hn - h \bigr]}{\PV\bigl[ \Hn - h\bigr]},  
		\end{equation}
		then 
		all assertions (i)--(iv) of Proposition~3.5 hold and, 
		in particular, 
		all of the above expressions  
		are bounded 
		uniformly with respect to $\{\cH_n\}_{n\in\bbN}\in\cS^\mu_{\mathrm{adm}}$, 
		$n\in\bbN$ and $h\in\cH_{-n}$. 
	\end{lemma}
	
	\begin{proof} 
		We show that the assumptions of the lemma 
		imply validity of Proposition~3.5(iii). 
		The remaining claims follow by equivalence of the 
		four statements (i)--(iv) in Proposition~3.5. 
		To this end, 
		by \eqref{eq:Es-Vars-1}--\eqref{eq:Es-Vars-4},  
		see Proposition~\ref{prop:mean-shifting} below,  
		we may without loss of generality assume that 
		$\mu$ and $\MU$ are centered, 
		$m=\widetilde{m}=0$. 
		
		We start with proving necessity of~(iii)
		for uniform boundedness in $n$ and $h$ (from above and below) 
		of the first two fractions in~\eqref{eq:uniform-Smu} 
		holding for all $\{\cH_n\}_{n\in\bbN}\in\cS^\mu_{\mathrm{adm}}$. 
		Let $n\in\bbN$.
		If Proposition~3.5(iii) 
		does not hold, then the norms 
		$\|\cdot\|_{\cH^0}^2 = \pV[\,\cdot\,]$ 
		and 
		$\|\cdot\|_{\CH^0}^2 = \PV[\,\cdot\,]$ 
		cannot be equivalent  
		on the orthogonal complements 
		of $\cH^0_n:=\operatorname{span}\{\gp_1,\ldots,\gp_n\}$ 
		in~$\cH^0$ and in~$\CH^0$ 
		(due to their equivalence 
		on the finite-dimensional subspace~$\cH^0_n$), 
		where $\{\gp_j\}_{j\in\bbN} \subset\cH^0\cap\CH^0$ 
		is the orthonormal basis for $\cH^0$ from 
		Lemma~4.1(ii). 
		Thus, 
		for any fixed $n\in\bbN$, 
		there exist $h^{(n)}\in \cH^0$ and $g^{(n)}\in\cH^0$ 
		such that $h^{(n)}$ and $g^{(n)}$ 
		are orthogonal to 
		$\cH^0_n$ 
		in $\cH^0$ and in~$\CH^0$, respectively, 
		and such that 
		either the relations 
		$\frac{\PV[ h^{(n)} ]}{\pV[ h^{(n)} ]}
		\geq n$  
		and  
		$\frac{\PV[ g^{(n)} ]}{\pV[ g^{(n)} ]} 
		\geq n$ hold,  
		or we obtain  
		that 
		$\frac{\pV[ h^{(n)} ]}{\PV[ h^{(n)} ] } \geq n$  
		and 
		$\frac{\pV[ g^{(n)} ]}{\PV[ g^{(n)} ]}  
		\geq n$. 
		Note that in either case, $h^{(n)}_n = 0$ 
		and $\widetilde{g}^{(n)}_n = 0$ vanish, 
		where $h^{(n)}_n, \widetilde{g}^{(n)}_n$ 
		denote the best linear predictors 
		of $h^{(n)}, g^{(n)}$
		based on $\cH_n:=\bbR\oplus\cH_n^0$ 
		and the measures $\mu$ and $\MU$, respectively. 
		In the first case, we therefore obtain 
		\begin{equation}\label{eq:proof:uniform-Smu-1} 
			\frac{\PV\bigl[ h^{(n)}_n - h^{(n)} \bigr]}{
				\pV\bigl[ h^{(n)}_n - h^{(n)} \bigr]}
			=
			\frac{\PV\bigl[ h^{(n)} \bigr]}{
				\pV\bigl[ h^{(n)} \bigr]}
			\geq n 
			\quad\;
			\text{and} 
			\quad\;
			\frac{\pV\bigl[ \widetilde{g}^{(n)}_n - g^{(n)} \bigr]}{ 
				\PV\bigl[ \widetilde{g}^{(n)}_n - g^{(n)}  \bigr]} 
			=
			\frac{\pV\bigl[ g^{(n)} \bigr]}{\PV\bigl[ g^{(n)} \bigr]}
			\leq n^{-1}, 
		\end{equation}
		so that in the limit $n\to\infty$ 
		the first fraction is unbounded 
		and the second converges to zero. 
		Since $\{\gp_j\}_{j\in\bbN}$ 
		is an orthonormal basis for $\cH^0$, we have  
		$\{\cH_n\}_{n\in\bbN} \in\cS^\mu_{\mathrm{adm}}$ and 
		\eqref{eq:proof:uniform-Smu-1} 
		contradicts the 
		boundedness (from above and below) 
		of the first two fractions in \eqref{eq:uniform-Smu}. 
		In the second case, 
		we have the reverse situation: 
		$\frac{\PV[ h^{(n)}_n - h^{(n)} ]}{
			\pV[ h^{(n)}_n - h^{(n)} ]}
		\leq n^{-1}$   
		and 
		$\frac{\pV[ \widetilde{g}^{(n)}_n - g^{(n)} ]}{ 
			\PV[ \widetilde{g}^{(n)}_n - g^{(n)} ]} 
		\geq n$, again 
		a contradiction. 
		
		We proceed with the latter two terms  
		in~\eqref{eq:uniform-Smu}. 
		To this end, assume again 
		that Proposition~3.5(iii) does not hold and, 
		for any fixed $n\in\bbN$, define 
		$\widetilde{\gp}_n,\ldots,\widetilde{\gp}_1$ 
		as the Gram--Schmidt orthonormalization 
		of $\gp_n,\ldots,\gp_1$ in $\CH^0$. 
		In particular, we have 
		$\widetilde{\gp}_n = \gp_n / \sqrt{\zeta_n}$, 
		where we set $\zeta_n := \| \gp_n \|_{\CH^0}^2 = \PV[\gp_n] 
		= \gamma_n^{-1} \| \CC^{1/2} e_n \|^2_{L_2(\cX,\nu_\cX)} < \infty$. 
		If Proposition~3.5(iii)
		does not hold, then, 
		for any fixed $n\in\bbN$, 
		there exist 
		$\phi_n, \varphi_n \in \cH^0$ 
		such that $\phi_n$ and~$\varphi_n$ 
		are orthogonal to 
		$\operatorname{span}\{\gp_1,\ldots,\gp_{n}\}$ 
		in $\cH^0$ and in $\CH^0$, respectively, 
		and such that either 
		$\frac{\PE[ \phi_n^2 ]}{\pE[ \phi_n^2 ]}
		=
		\frac{\PV[\phi_n]}{\pV[\phi_n]} 
		\geq n \zeta_n$  
		and  
		$\frac{\PE[ \varphi_n^2 ]}{
			\pE[ \varphi_n^2 ]} 
		= 
		\frac{\PV[ \varphi_n ]}{ 
			\pV[ \varphi_n ]} 
		\geq n \zeta_n$,  
		or the two relations 
		$\frac{\pE[\phi_n^2]}{\PE[\phi_n^2]} \geq n \zeta_n^{-1}$  
		and 
		$\frac{\pE[\varphi_n^2]}{\PE[\varphi_n^2]}  
		\geq n \zeta_n^{-1}$ hold. 
		Define    
		
		\begin{align*} 
			\theta_n 
			&:= 
			\min\bigl\{  \PE \bigl[ h^2 \bigr] / \pE \bigl[ h^2 \bigr] : 
			h \in \operatorname{span}\{ \varphi_n, \widetilde{\gp}_n \}, \; 
			h \neq 0 
			\bigr\} , 
			\\
			\widetilde{\theta}_n 
			&:= 
			\min\bigl\{ \pE \bigl[ h^2 \bigr] / \PE \bigl[ h^2 \bigr] : 
			h \in \operatorname{span}\{ \phi_n, \gp_n \}, \; 
			h \neq 0 
			\bigr\} , 
		\end{align*} 
		and $\Theta_n, \widetilde{\Theta}_n\in(0,\infty)$ 
		are given in the same way with 
		$\min$ replaced by~$\max$. 
		Note that in both cases  
		$\min\bigl\{ 
		\Theta_n \theta_n^{-1} , 
		\widetilde{\Theta}_n \widetilde{\theta}_n^{-1} 
		\bigr\}
		\geq n$. 
		It then follows as in \cite[][Proof of Theorem 5]{CLEVELAND71}
		that there are    
		$g^{(n)}, \xi_n \in 
		\operatorname{span}\{ \varphi_n, \widetilde{\gp}_n \}$
		and 
		$h^{(n)}, \psi_n \in 
		\operatorname{span}\{ \phi_n, \gp_n \}$
		such that 
		\[ 
		\frac{ \pE\bigl[ (\widetilde{g}^{(n)}_1 - g^{(n)} )^2 \bigr] }{
			\pE\bigl[ ( g^{(n)}_1 - g^{(n)} )^2 \bigr] } 
		= 
		\frac{ (\theta_n + \Theta_n)^2 }{4 \theta_n \Theta_n} 
		\quad\;\; 
		\text{and} 
		\quad\;\; 
		\frac{ \PE\bigl[ (h^{(n)}_1 - h^{(n)} )^2 \bigr] }{
			\PE\bigl[ ( \widetilde{h}^{(n)}_1 - h^{(n)} )^2 \bigr] } 
		= 
		\frac{ (\widetilde{\theta}_n + \widetilde{\Theta}_n )^2 }{
			4 \widetilde{\theta}_n \widetilde{\Theta}_n},  
		\] 
		where $g^{(n)}_1, \widetilde{g}^{(n)}_1$ 
		($h^{(n)}_1, \widetilde{h}^{(n)}_1$)
		denote 
		the best linear predictors of $g^{(n)}$ 
		(of $h^{(n)}$) based 
		on the subspace 
		$\cU_{n} := \bbR\oplus\operatorname{span}\{ \xi_n \}
		\subset\cH$ 
		($\cV_{n} := \bbR\oplus\operatorname{span}\{ \psi_n \}\subset\cH$) 
		and the measures 
		$\mu$ and $\MU$, 
		respectively.  
		We now define 
		$\cH^\star_1 := \cU_{1}$, 
		and, for $n\geq 2$,  
		$\cH^\star_n 
		:= 
		\cU_n 
		\oplus
		\operatorname{span}\{\widetilde{\gp}_1,
		\ldots, \widetilde{\gp}_{n-1} \}$.  
		Recall that $g^{(n)}  
		\in \operatorname{span}\{ \varphi_n, \widetilde{\gp}_n \}$ 
		and that  
		$\varphi_n$, $\widetilde{\gp}_n$  
		are $\CH^0$-orthogonal 
		to $\widetilde{\gp}_1, \ldots, \widetilde{\gp}_{n-1}$. 
		By letting $g^{(n)}_n, \widetilde{g}^{(n)}_n$ 
		denote the best linear predictors of $g^{(n)}$ 
		based 
		on $\cH^\star_n$ and the Gaussian measures 
		$\mu$ and $\MU$, respectively, 
		we therefore conclude that 
		$\widetilde{g}^{(n)}_n = \widetilde{g}^{(n)}_1$, 
		and, in addition, the estimate 
		$\pE \bigl[ (g^{(n)}_n - g^{(n)} )^2 \bigr] 
		\leq 
		\pE \bigl[ (g^{(n)}_1 - g^{(n)} )^2 \bigr]$
		holds, 
		since 
		$g^{(n)}_1 \in \cU_{n} \subset \cH^\star_n$. 
		Hence, 
		\[ 
		\frac{ \pE \bigl[ (\widetilde{g}^{(n)}_n - g^{(n)} )^2 \bigr] }{
			\pE \bigl[ ( g^{(n)}_n - g^{(n)} )^2 \bigr] } 
		=
		\frac{ \pE \bigl[ ( \widetilde{g}^{(n)}_1 - g^{(n)} )^2 \bigr] }{
			\pE \bigl[ ( g^{(n)}_n - g^{(n)} )^2 \bigr] } 
		\geq 
		\frac{ \pE \bigl[ ( \widetilde{g}^{(n)}_1 - g^{(n)} )^2 \bigr] }{
			\pE \bigl[ ( g^{(n)}_1 - g^{(n)} )^2 \bigr] } 
		=
		\frac{ (\theta_n + \Theta_n)^2 }{4 \theta_n \Theta_n} 
		\geq 
		\frac{1}{4} \frac{ \Theta_n }{ \theta_n } 
		\geq 
		\frac{n}{4}.    
		\]
		By defining 
		$\cH^\divideontimes_1 := \cV_{1}$, 
		and, for $n\geq 2$,  
		$\cH^\divideontimes_n 
		:= 
		\cV_n\oplus\operatorname{span}\{ \gp_1,
		\ldots, \gp_{n-1} \}$, 
		we furthermore  
		obtain that 
		$\frac{ \PE [ (h^{(n)}_n - h^{(n)} )^2 ] }{
			\PE [ ( \widetilde{h}^{(n)}_n - h^{(n)} )^2 ] } 
		\geq 
		\widetilde{\Theta}_n / (4 \widetilde{\theta}_n) 
		\geq 
		n/4$, 
		where $h^{(n)}_n$ and $\widetilde{h}^{(n)}_n$ 
		denote the best linear predictors of $h^{(n)}$ 
		based 
		on $\cH^\divideontimes_n$ and 
		$\mu$ resp.\ $\MU$.    
		We thus have constructed two  
		sequences  
		of subspaces 
		$\{\cH^\star_n\}_{n\in\bbN}$, 
		$\{\cH^\divideontimes_n\}_{n\in\bbN}$,   
		which are not necessarily nested, 
		such that 
		\[ 
		\forall n\in\bbN 
		\quad    
		\exists g^{(n)}, h^{(n)} \in\cH 
		: 
		\quad    
		\frac{ \pV \bigl[ \widetilde{g}^{(n)}_n - g^{(n)} \bigr] }{
			\pV \bigl[ g^{(n)}_n - g^{(n)} \bigr] }  
		\geq 
		\frac{n}{4} 
		\quad\;\; 
		\text{and} 
		\quad\;\; 
		\frac{ \PV \bigl[ h^{(n)}_n - h^{(n)} \bigr] }{
			\PV \bigl[ \widetilde{h}^{(n)}_n - h^{(n)} \bigr] }  
		\geq 
		\frac{n}{4}.   
		\] 
		Here, we used that 
		$m=\widetilde{m}=0$ which implies that    
		$\pV \bigl[ \widetilde{g}^{(n)}_n - g^{(n)} \bigr]
		= 
		\pE \bigl[ ( \widetilde{g}^{(n)}_n - g^{(n)} )^2 \bigr]$ 
		and   
		$\PV \bigl[ h^{(n)}_n - h^{(n)} \bigr]
		= 
		\PE \bigl[ ( h^{(n)}_n - h^{(n)} )^2 \bigr]$.  
		Finally, since, for all $n \geq 2$, 
		$\operatorname{span}\{ \gp_1,\ldots,\gp_{n-1} \} 
		= 
		\operatorname{span}\{ \widetilde{\gp}_1,\ldots,\widetilde{\gp}_{n-1} \} 
		\subset \cH^\star_n \cap \cH^\divideontimes_n$ 
		holds, we obtain 
		that $\{\cH^\star_n\}_{n\in\bbN} , 
		\{\cH^\divideontimes_n\}_{n\in\bbN} 
		\in\cS^\mu_{\mathrm{adm}}$  
		by the basis property of $\{\gp_j\}_{j\in\bbN}$ in $\cH^0$   
		and the desired contradiction 
		follows. 
	\end{proof} 
	
	\section{Calculations for different means}\label{appendix:mean_centering}
	
	In this section we justify 
	that we may, 
	without loss of generality, assume that $m=0$ 
	when proving all the theorems  
	of Section~3. 
	We first show that the best linear predictors 
	based on two different Gaussian measures 
	with the same covariance operator  
	differ by a constant. 
	
	\begin{lemma}\label{lem:same-cov-kriging} 
		Let $\hat{\cC}\from L_2(\cX,\nu_\cX) \to L_2(\cX,\nu_\cX)$ 
		be a self-adjoint, positive definite, trace-class linear operator. 
		For $\widehat{m},\breve{m}\in L_2(\cX,\nu_\cX)$, 
		consider two Gaussian measures  
		$\widehat{\mu} := \normal(\widehat{m}, \hat{\cC})$ 
		and 
		$\breve{\mu}   := \normal(\breve{m}, \hat{\cC})$. 
		Then, 
		\begin{equation}\label{eq:same-cov-kriging}
			\widehat{h}_n  
			= 
			\breve{h}_n 
			-  
			\widehat{\pE}\bigl[ \breve{h}_n - h \bigr]   
			\quad
			\text{and} 
			\quad 
			\breve{h}_n 
			= 
			\widehat{h}_n 
			-  
			\breve{\pE}\bigl[ \widehat{h}_n - h \bigr],  
		\end{equation}
		where $\widehat{h}_n, \breve{h}_n$ 
		are the best linear predictors 
		based on a subspace $\cH_n$ of $\hat{\cH}$ 
		corresponding to $n$ linearly independent 
		observations (cf.~Equation~(2.9) in Section 2.2)  
		and the measures 
		$\widehat{\mu}$ resp.\ $\breve{\mu}$. 
		Furthermore, 
		$\widehat{\pE}$ and $\breve{\pE}$ denote the 
		expectation operators under 
		$\widehat{\mu}$ and $\breve{\mu}$, 
		respectively.  
	\end{lemma}  
	
	\begin{proof} 
		Firstly, unbiasedness  
		$\widehat{\pE}\bigl[\widehat{h}_n - h \bigr]= 0$ 
		for $\widehat{h}_n$ as in \eqref{eq:same-cov-kriging}
		is obvious. 
		If $\breve{h}_n$ is the best 
		linear predictor based on $\cH_n$ 
		and~$\breve{\mu}$, 
		we obtain 
		$\breve{\pE}\bigl[ \breve{h}_n - h \bigr]=0$ 
		and the identical covariances   
		of $\widehat{\mu}$ and $\breve{\mu}$ 
		show that, for all $g_n\in\cH_n$,  
		\begin{align*}
			0 
			&= 
			\breve{\pE}\bigl[ (\breve{h}_n - h) g_n \bigr] 
			= 
			\breve{\pE}\bigl[ 
			\bigl(\breve{h}_n - h - \breve{\pE}\bigl[ \breve{h}_n - h \bigr]\bigr) 
			\bigl(g_n - \breve{\pE}[g_n] \bigr) \bigr]
			\\ 
			&= 
			\widehat{\pE}\bigl[ 
			\bigl(\breve{h}_n - h - \widehat{\pE}\bigl[ \breve{h}_n - h \bigr]\bigr) 
			\bigl(g_n - \widehat{\pE}[g_n] \bigr) \bigr]
			= 
			\widehat{\pE}\bigl[ 
			\bigl(\breve{h}_n - \widehat{\pE}\bigl[ \breve{h}_n - h \bigr] - h\bigr) 
			g_n \bigr]. 
		\end{align*} 
		By uniqueness of the kriging 
		predictor defined in~(2.10), 
		we conclude that 
		$\widehat{h}_n$ in \eqref{eq:same-cov-kriging} 
		is the best linear predictor 
		based on $\cH_n$ and the measure $\widehat{\mu}$.   
		Changing the roles of $\widehat{\mu}, \breve{\mu}$
		proves the second assertion 
		in \eqref{eq:same-cov-kriging}. 
	\end{proof} 
	
	In addition to  
	$\mu=\normal(m,\cC)$ and 
	$\MU=\normal(\widetilde{m},\CC)$, 
	we consider the centered and mean-shifted 
	Gaussian measures 
	$\muc := \normal(0,\cC)$,  
	$\mus := \normal(\widetilde{m}-m, \cC)$,  
	$\MUc := \normal(0,\CC)$,  
	$\MUs := \normal(\widetilde{m}-m,\CC)$.
	Similarly as for $\mu$ and $\MU$,  
	$\pEc[\,\cdot\,]$, 
	$\pEs[\,\cdot\,]$, 
	$\PEc[\,\cdot\,]$, 
	$\PEs[\,\cdot\,]$
	and 
	$\pVc[\,\cdot\,]$, 
	$\pVs[\,\cdot\,]$, 
	$\PVc[\,\cdot\,]$, 
	$\PVs[\,\cdot\,]$
	denote the expectation and variance operators 
	under the measures $\muc$, $\mus$, $\MUc$ and $\MUs$, 
	respectively.  
	Furthermore, $\hnc, \hns, \Hnc$ 
	and $\Hns$ are the best linear predictors of $h$  
	based on a subspace~$\cH_n$, generated by $n$ observations,  
	and the measures $\muc, \mus, \MUc, \MUs$, cf.~(2.10).

	For fixed $n\in\bbN$, we now consider the corresponding errors 
	\begin{align*}
		\err &:= \hn - h, 
		& 
		\errc &:= \hnc - h, 
		& 
		\errs &:= \hns - h, 
		\\
		\ERR &:= \Hn - h, 
		& 
		\ERRc &:= \Hnc - h, 
		& 
		\ERRs &:= \Hns - h.
	\end{align*}

	\begin{proposition}\label{prop:mean-shifting} 
		With the above definitions, we have the identities: 
		\begin{align}
			\frac{\pE \bigl[ \ERR^2 \bigr]}{
				\pE \bigl[ \err^2 \bigr]} 
			&= 
			\frac{\pEc \bigl[ \ERRs^2 \bigr]}{
				\pEc \bigl[ \errc^2 \bigr]},  
			&
			\frac{\pV \bigl[ \ERR \bigr]}{
				\pV \bigl[ \err \bigr]} 
			&= 
			\frac{\pVc \bigl[ \ERRs \bigr]}{
				\pVc \bigl[ \errc \bigr]} 
			=
			\frac{\pVc \bigl[ \ERRc \bigr]}{
				\pVc \bigl[ \errc \bigr]} 
			=
			\frac{\pEc \bigl[ \ERRc^2 \bigr]}{
				\pEc \bigl[ \errc^2 \bigr]} , 
			\label{eq:Es-Vars-1} 
			\\
			\frac{\PE \bigl[ \err^2 \bigr]}{
				\PE \bigl[ \ERR^2 \bigr]} 
			&= 
			\frac{\PEs \bigl[ \errc^2 \bigr]}{
				\PEs \bigl[ \ERRs^2 \bigr]} , 
			&
			\frac{\PV \bigl[ \err \bigr]}{
				\PV \bigl[ \ERR \bigr]} 
			&= 
			\frac{\PVs \bigl[ \errc \bigr]}{
				\PVs \bigl[ \ERRs \bigr]} 
			=
			\frac{\PVc \bigl[ \errc \bigr]}{
				\PVc \bigl[ \ERRc \bigr]} 
			=
			\frac{\PEc \bigl[ \errc^2 \bigr]}{
				\PEc \bigl[ \ERRc^2 \bigr]} , 
			\label{eq:Es-Vars-2} 
			\\
			\frac{\PE \bigl[ \err^2 \bigr]}{
				\pE \bigl[ \err^2 \bigr]} 
			&= 
			\frac{\PEs \bigl[ \errc^2 \bigr]}{
				\pEc \bigl[ \errc^2 \bigr]} , 
			&
			\frac{\PV \bigl[ \err \bigr]}{
				\pV \bigl[ \err \bigr]} 
			&= 
			\frac{\PVs \bigl[ \errc \bigr]}{
				\pVc \bigl[ \errc \bigr]} 
			=
			\frac{\PVc \bigl[ \errc \bigr]}{
				\pVc \bigl[ \errc \bigr]} 
			=
			\frac{\PEc \bigl[ \errc^2 \bigr]}{
				\pEc \bigl[ \errc^2 \bigr]}, 
			\label{eq:Es-Vars-3} 
			\\
			\frac{\pE \bigl[ \ERR^2 \bigr]}{
				\PE \bigl[ \ERR^2 \bigr]} 
			&= 
			\frac{\pEc \bigl[ \ERRs^2 \bigr]}{
				\PEs \bigl[ \ERRs^2 \bigr]}, 
			&
			\frac{\pV \bigl[ \ERR \bigr]}{
				\PV \bigl[ \ERR \bigr]} 
			&= 
			\frac{\pVc \bigl[ \ERRs \bigr]}{
				\PVs \bigl[ \ERRs \bigr]} 
			=
			\frac{\pVc \bigl[ \ERRc \bigr]}{
				\PVc \bigl[ \ERRc \bigr]} 
			=
			\frac{\pEc \bigl[ \ERRc^2 \bigr]}{
				\PEc \bigl[ \ERRc^2 \bigr]}.  
			\label{eq:Es-Vars-4} 
		\end{align}

		Furthermore, we obtain the following relations: 
		\begin{align}  
			&\frac{\PEs \bigl[ \errc^2 \bigr]}{
				\pEc \bigl[ \errc^2 \bigr]} 
			= 
			\frac{ \PV \bigl[ \err \bigr]}{
				\pV \bigl[ \err \bigr]}
			+ 
			\frac{\bigl| \pEs[\errc] \bigr|^2}{
				\pEc \bigl[ \errc^2 \bigr]}, 
			&&
			\frac{\pEc \bigl[ \ERRs^2 \bigr]}{
				\PEs \bigl[ \ERRs^2 \bigr]} 
			= 
			\frac{ \pV \bigl[ \ERR \bigr]}{
				\PV \bigl[ \ERR \bigr]}
			+ 
			\frac{\bigl| \PEs[\ERRc] \bigr|^2}{
				\PEc \bigl[ \ERRc^2 \bigr]}, 
			\label{eq:decomp_means-0}
			\\
			&\frac{\pEc \bigl[ \ERRs^2 \bigr]}{
				\pEc \bigl[ \errc^2 \bigr]} - 1   
			\geq  
			\frac{ \pV \bigl[ \ERR \bigr]}{
				\pV \bigl[ \err \bigr]} - 1  , 
			&& 
			\frac{\PEs \bigl[ \errc^2 \bigr]}{
				\PEs \bigl[ \ERRs^2 \bigr]} 
			- 1
			\geq 
			\frac{ \PV \bigl[ \err \bigr]}{
				\PV \bigl[ \ERR \bigr]} 
			- 1 , 
			\label{eq:decomp_means-1}
			\\
			&\frac{\pEc \bigl[ \ERRs^2 \bigr]}{
				\pEc \bigl[ \errc^2 \bigr]} 
			- 1   
			\geq   
			c_1^{-2}
			\frac{\bigl| \PEs[\ERRc] \bigr|^2}{
				\PEc \bigl[ \ERRc^2 \bigr]} , 
			&& 
			\frac{\PEs \bigl[ \errc^2 \bigr]}{
				\PEs \bigl[ \ERRs^2 \bigr]} 
			- 1
			\geq 
			c_0^{-2}
			\frac{\bigl| \pEs[ \errc ] \bigr|^2}{
				\pEc \bigl[ \errc^2 \bigr]}, 
			\label{eq:decomp_means-2}
		\end{align}
		where \eqref{eq:decomp_means-2} holds, 
		provided that 
		\[
		c_0 := 
		\norm{\CC^{1/2} \cC^{-1/2}}{\cL(L_2(\cX,\nu_\cX))}
		\in (0,\infty) 
		\quad 
		\text{and} 
		\quad 
		c_1 := 
		\norm{\cC^{1/2} \CC^{-1/2}}{\cL(L_2(\cX,\nu_\cX))}
		\in (0,\infty).
		\] 
	\end{proposition}
	
	\begin{proof} 
		We first note that the covariance operators  
		of $\mu,\muc,\mus$ 
		are identical, 
		and the same holds 
		for those of $\MU,\MUc,\MUs$. 
		For this reason, the statements for 
		the variances in \eqref{eq:Es-Vars-1}--\eqref{eq:Es-Vars-4} 
		readily follow from 
		Lemma~\ref{lem:same-cov-kriging} 
		which shows that the errors 
		of best linear predictors 
		for Gaussian measures with the same covariance 
		operator differ by a constant.  	
		By the same argument we find that 
		\begin{equation}\label{eq:proof:mean-shifting-1}
			\pE \bigl[ \err^2 \bigr]  
			= \pV [ \err ] 
			=  \pV [ \errc ]  
			=  \pVc[ \errc ] 
			=  \pEc \bigl[ \errc^2 \bigr]. 
		\end{equation}
		Furthermore, 
		$\PE[e] 
		= 
		\PE \bigl[ 
		\errc 
		- 
		\pE[\errc]
		\bigr]
		= 
		\PE[\errc] 
		- 
		\pE[\errc] 
		=
		\PEs [ \errc ]$, 
		where we used that, 
		for all $h\in\cH$, 
		$\PEs[h] 
		= 
		\PE[h] - \pE[h]$. 
		Therefore, 
		\begin{equation}\label{eq:proof:mean-shifting-2}
			\PE\bigl[ \err^2 \bigr] 
			= 
			\PV[ \err ] 
			+ 
			\bigl|\PE[ \err ] \bigr|^2  
			= 
			\PVs [ \errc ] 
			+ 
			\bigl| \PEs[ \errc ] \bigr|^2  
			= \PEs \bigl[ \errc^2 \bigr] . 
		\end{equation}
		Combing the two equalities 
		\eqref{eq:proof:mean-shifting-1} 
		and 
		\eqref{eq:proof:mean-shifting-2} 
		proves the identity for the second moments 
		in \eqref{eq:Es-Vars-3}. 
		Next, we prove the corresponding 
		statement in~\eqref{eq:Es-Vars-1}. 
		Lemma~\ref{lem:same-cov-kriging} 
		yields the identities 
		$\Hn = \Hnc - \PE\bigl[ \Hnc - h \bigr]$ 
		and    
		$\Hns = \Hnc - \PEs\bigl[ \Hnc - h \bigr]$.  
		Thus,  
		\begin{align*}
			\pE[\ERR]  
			&= 
			\pE \bigl[  
			\Hnc - h
			- \PE\bigl[ \Hnc - h \bigr]  
			\bigr]   
			= -\PE\bigl[ \ERRc \bigr] + \pE\bigl[ \ERRc \bigr]  
			\\
			&= 
			- \PEs\bigl[ \ERRc \bigr] 
			= \PEc \bigl[  
			\Hnc - h 
			- 
			\PEs\bigl[ \Hnc - h \bigr]
			\bigr]
			= \PEc \bigl[ \ERRs \bigr] = \pEc \bigl[ \ERRs \bigr]. 
		\end{align*}
		Since 
		$\mu$ and $\muc$ have identical covariance operators, 
		this  
		implies that  
		\begin{equation}\label{eq:proof:mean-shifting-3}
			\pE\bigl[ \ERR^2 \bigr]
			= 
			\pV[\ERR] 
			+ 
			\bigl| \pE[\ERR] \bigr|^2
			= 
			\pVc[\ERRs] 
			+ 
			\bigl| \pEc[\ERRs] \bigr|^2
			= 
			\pEc\bigl[ \ERRs^2 \bigr]. 
		\end{equation}
		Combining \eqref{eq:proof:mean-shifting-1} 
		and \eqref{eq:proof:mean-shifting-3} 
		shows the first equality in \eqref{eq:Es-Vars-1}. 
		For \eqref{eq:Es-Vars-2} and   
		\eqref{eq:Es-Vars-4} the second moment identities can 
		be derived similarly: 
		By changing the roles of 
		$\mu$ and $\MU$, 
		\eqref{eq:proof:mean-shifting-1}--\eqref{eq:proof:mean-shifting-3}
		above show that 
		\begin{equation}\label{eq:proof:mean-shifting-4}
			\frac{\PE\bigl[ \err^2 \bigr]}{\PE\bigl[ \ERR^2 \bigr]} 
			= 
			\frac{\PEc\bigl[ \errs^2 \bigr]}{\PEc\bigl[ \ERRc^2 \bigr]} 
			\qquad 
			\text{and} 
			\qquad 
			\frac{\pE\bigl[ \ERR^2 \bigr]}{\PE\bigl[ \ERR^2 \bigr]} 
			= 
			\frac{\pEs\bigl[ \ERRc^2 \bigr]}{\PEc\bigl[ \ERRc^2 \bigr]} . 
		\end{equation}
		In addition, we obtain  
		\begin{equation}\label{eq:proof:mean-shifting-5} 
			\PEc\bigl[ \ERRc^2 \bigr] 
			= 
			\PVc[\ERRc]
			= 
			\PVs[\ERRs] 
			= 
			\PEs\bigl[ \ERRs^2 \bigr],
		\end{equation} 
		as well as 
		\begin{align} 
			\PEc\bigl[ \errs^2 \bigr] 
			&= 
			\PVc[\errs] + \bigl| \PEc[\errs] \bigr|^2
			= 
			\PVs[\errc] + \bigl| \PEs[\errc] \bigr|^2
			= 
			\PEs\bigl[ \errc^2 \bigr], 
			\label{eq:proof:mean-shifting-6}
			\\
			\pEs\bigl[ \ERRc^2 \bigr] 
			&= 
			\pVs[\ERRc] + \bigl| \pEs[\ERRc] \bigr|^2
			= 
			\pVc[\ERRs] + \bigl| \pEc[\ERRs] \bigr|^2
			= 
			\pEc\bigl[ \ERRs^2 \bigr].
			\label{eq:proof:mean-shifting-7}  
		\end{align}
		Here, we have used that 
		$\errs = \errc - \pEs[\errc]$, 
		$\PEc[\errc] = \pEc[\errc]=0$,   
		$\pEs[\errc] = \PEs[\errc]$,
		as well as  
		$\ERRc = \ERRs - \PEc[\ERRs]$,
		$\pEs[\ERRs] = \PEs[\ERRs]=0$,    
		and 
		$\PEc[\ERRs] = \pEc[\ERRs]$. 
		Combining the identities 
		\eqref{eq:proof:mean-shifting-4},  
		\eqref{eq:proof:mean-shifting-5},  
		\eqref{eq:proof:mean-shifting-6},     
		\eqref{eq:proof:mean-shifting-7}  
		finishes the proof 
		of \eqref{eq:Es-Vars-1}--\eqref{eq:Es-Vars-4}. 
		
		It remains to derive \eqref{eq:decomp_means-0}--\eqref{eq:decomp_means-2}.  
		To this end, 
		we again exploit Lemma~\ref{lem:same-cov-kriging}, 
		which gives that   
		$\errc = \err - \pEc[ \err ]$, 
		$\ERRs = \ERRc - \PEs[ \ERRc ]$
		and 
		$\ERRc = \ERR - \PEc[ \ERR ]$.    
		Then,  
		\begin{align*} 
			\PEs \bigl[ \errc^2 \bigr]   
			&= 
			\PVs\bigl[ \errc \bigr] 
			+ 
			\bigl| \PEs[\errc] \bigr|^2 
			=
			\PV \bigl[ \err \bigr] 
			+ 
			\bigl| \pEs[\errc] \bigr|^2, 
			\\
			\pEc \bigl[ \ERRs^2 \bigr]   
			&= 
			\pEc \bigl[ \ERRc^2 \bigr] 
			+ 
			\bigl| \PEs[\ERRc] \bigr|^2
			- 
			2 
			\pEc[\ERRc] 
			\PEs[\ERRc]
			= 
			\pV \bigl[ \ERR \bigr] 
			+ 
			\bigl| \PEs[\ERRc] \bigr|^2, 
		\end{align*}
		where the last equality holds, 
		since $\pEc[\ERRc] 
		= 
		\PEc[\ERRc] = 0$. 
		Combining these identities 
		with \eqref{eq:proof:mean-shifting-1} shows  
		the first relations in 
		\eqref{eq:decomp_means-0} and \eqref{eq:decomp_means-1}, 
		\[
		\frac{\PEs \bigl[ \errc^2 \bigr]  }{
			\pEc\bigl[ \errc^2 \bigr]}
		= 
		\frac{ \PV \bigl[ \err \bigr]}{
			\pV \bigl[ \err \bigr]}
		+ 
		\frac{\bigl| \pEs[\errc] \bigr|^2}{
			\pEc \bigl[ \errc^2 \bigr]}, 
		\qquad   
		\frac{\pEc \bigl[ \ERRs^2 \bigr]}{
			\pEc \bigl[ \errc^2 \bigr]} - 1   
		=  
		\frac{\pV \bigl[ \ERR \bigr]}{
			\pV \bigl[ \err \bigr]} - 1   
		+ 
		\frac{\bigl| \PEs[\ERRc] \bigr|^2}{
			\pEc \bigl[ \errc^2 \bigr]}
		\geq 
		\frac{\pV \bigl[ \ERR \bigr]}{
			\pV \bigl[ \err \bigr]} - 1 
		.  
		\]
		By changing the roles of $\mu,\MU$, 
		these two relations  
		imply that 
		\[
		\frac{\pEs \bigl[ \ERRc^2 \bigr]}{
			\PEc \bigl[ \ERRc^2 \bigr]} 
		= 
		\frac{ \pV \bigl[ \ERR \bigr]}{
			\PV \bigl[ \ERR \bigr]}
		+ 
		\frac{\bigl| \PEs[ \ERRc ] \bigr|^2}{
			\PEc \bigl[ \ERRc^2 \bigr]}, 
		\qquad\quad  
		\frac{\PEc \bigl[ \errs^2 \bigr]}{
			\PEc \bigl[ \ERRc^2 \bigr]} - 1   
		\geq  
		\frac{ \PV \bigl[ \err \bigr]}{
			\PV \bigl[ \ERR \bigr]} - 1  , 
		\]
		which combined with 
		\eqref{eq:proof:mean-shifting-5}, 
		\eqref{eq:proof:mean-shifting-6} and 
		\eqref{eq:proof:mean-shifting-7} 
		yield the remaining 
		relations in 
		\eqref{eq:decomp_means-0}, \eqref{eq:decomp_means-1}.   
		
		To derive \eqref{eq:decomp_means-2}, 
		we recall that 
		$\frac{\pV [ \ERR ]}{
			\pV [ \err ]} - 1 \geq 0$. 
		By invoking 
		that 
		\[
		\frac{\pEc[ \ERRc^2 ]}{\PEc[ \ERRc^2 ]} 
		\leq c_1^2  
		\qquad 
		\text{and} 
		\qquad 
		\frac{\pEc[ \ERRc^2 ]}{\pEc[ \errc^2 ]} 
		\geq 1, 
		\] 
		we therefore obtain 
		\begin{align*} 
			\frac{\pEc \bigl[ \ERRs^2 \bigr]}{
				\pEc \bigl[ \errc^2 \bigr]} - 1   
			= 
			\frac{\pV \bigl[ \ERR \bigr]}{
				\pV \bigl[ \err \bigr]} - 1   
			+ 
			\frac{\bigl| \PEs[\ERRc] \bigr|^2}{
				\pEc \bigl[ \errc^2 \bigr]}
			\geq 
			\frac{\bigl| \PEs[\ERRc] \bigr|^2}{
				\pEc \bigl[ \errc^2 \bigr]}
			&=  
			\frac{\bigl| \PEs[\ERRc] \bigr|^2} 
			{\PEc\bigl[ \ERRc^2 \bigr]}
			\, 
			\frac{\PEc\bigl[ \ERRc^2 \bigr]}
			{\pEc\bigl[ \ERRc^2 \bigr]}
			\, 
			\frac{\pEc\bigl[ \ERRc^2 \bigr]}{
				\pEc\bigl[ \errc^2 \bigr]}
			\\
			&\geq 
			c_1^{-2} 
			\frac{\bigl| \PEs[\ERRc] \bigr|^2} 
			{\PEc\bigl[ \ERRc^2 \bigr]}. 
		\end{align*} 
		This shows the first inequality in 
		\eqref{eq:decomp_means-2}. 
		Since $\frac{\PEc[\errc^2]}{\pEc[\errc^2]} \leq c_0^2$
		is also bounded, we can exchange the roles of $\mu,\MU$ 
		which combined with 
		\eqref{eq:proof:mean-shifting-5} and 
		\eqref{eq:proof:mean-shifting-6} 
		show the second inequality in  
		\eqref{eq:decomp_means-2}. 
		This completes the proof. 
	\end{proof} 
	
	\begin{remark}\label{rem:uniformly-bounded} 
		Note that, if for a sequence 
		$\{\cH_n\}_{n\in\bbN} \in \cS^{\mu}_{\mathrm{adm}}$ 
		(with $\cS^{\mu}_{\mathrm{adm}}$ defined as  
		in Equation~(2.12) in Section~2.3) any of 
		the left-hand sides in \eqref{eq:decomp_means-0} 
		is bounded uniformly in $n\in\bbN$ and in $h\in\cH_{-n}$, 
		then so are both terms on the corresponding right-hand side, 
		because all arising terms are nonnegative. 
	\end{remark}

\end{appendix}




\end{document}